\documentclass[preprint,12pt,reqno]{elsarticle}
\usepackage[a4paper,margin=25mm]{geometry}
\usepackage[T1]{fontenc}
\usepackage{lmodern,amsmath,amssymb,amsthm,mathtools,microtype}
\usepackage{aliascnt}
\microtypesetup{expansion=false}
\input{glyphtounicode}
\usepackage[hidelinks]{hyperref}
\usepackage[nameinlink,capitalize]{cleveref}
\hypersetup{pdftitle={Generalized scalar curvature and Ricci flow on compact Lie groups},pdfauthor={Zhiqi Chen, Jianhua Yu, Huibin Chen, Junna Ni},pdfsubject={Confidential manuscript prepared for Advances in Mathematics}}
\numberwithin{equation}{section}
\newtheorem{maintheorem}{Theorem}

\newtheorem{theorem}{Theorem}[section]
\newaliascnt{proposition}{theorem}
\newtheorem{proposition}[proposition]{Proposition}
\aliascntresetthe{proposition}
\newaliascnt{lemma}{theorem}
\newtheorem{lemma}[lemma]{Lemma}
\aliascntresetthe{lemma}
\newaliascnt{corollary}{theorem}

\aliascntresetthe{corollary}
\theoremstyle{remark}
\newaliascnt{remark}{theorem}
\newtheorem{remark}[remark]{Remark}
\aliascntresetthe{remark}
\crefname{maintheorem}{Theorem}{Theorems}
\Crefname{maintheorem}{Theorem}{Theorems}
\crefname{proposition}{Proposition}{Propositions}
\crefname{lemma}{Lemma}{Lemmas}
\crefname{corollary}{Corollary}{Corollaries}
\crefname{remark}{Remark}{Remarks}
\DeclareMathOperator{\Ric}{Ric}
\DeclareMathOperator{\scal}{scal}
\DeclareMathOperator{\tr}{tr}
\DeclareMathOperator{\Ad}{Ad}
\DeclareMathOperator{\Sym}{Sym}
\DeclareMathOperator{\Isom}{Isom}
\DeclareMathOperator{\Vol}{Vol}
\newcommand{\id}{\mathrm{Id}}
\newcommand{\Sgen}{\mathcal S}
\newcommand{\Egen}{\mathcal E}
\newcommand{\gfr}{\mathfrak g}
\newcommand{\lfr}{\mathfrak l}
\newcommand{\Kill}{\mathfrak{isom}}
\newcommand{\HH}{\mathrm H}
\newcommand{\ext}{\Lambda}

\journal{Advances in Mathematics}

\begin{document}
\begin{frontmatter}
\title{Generalized scalar curvature and Ricci flow\\on compact Lie groups}

\author[gdut]{Zhiqi Chen}
\ead{chenzhiqi@gdut.edu.cn}
\author[scnu]{Jianhua Yu}
\ead{yujianhuascnu@126.com}
\author[njnu]{Huibin Chen}
\ead{chenhuibin@njnu.edu.cn}
\author[scnu]{Junna Ni\corref{cor1}}
\ead{nijunna@126.com}
\cortext[cor1]{Corresponding author.}

\affiliation[gdut]{organization={School of Mathematics and Statistics, Guangdong University of Technology},
                  city={Guangzhou},postcode={510520},country={P.R.~China}}
\affiliation[scnu]{organization={School of Mathematical Sciences, South China Normal University},
                  city={Guangzhou},postcode={510631},country={P.R.~China}}
\affiliation[njnu]{organization={Institute of Mathematics, School of Mathematical Sciences, Nanjing Normal University},
                  city={Nanjing},postcode={210023},country={P.R.~China}}

\begin{abstract}
Let $G$ be a compact connected semisimple Lie group, let $Q$ be a bi-invariant metric and let $\rho$ be its Cartan three-form. We prove that every left-invariant generalized Ricci flow with $[H_0]=\pm[\rho]$ exists for all positive time and converges smoothly to a Bismut--Ricci-flat pair without rescaling or pullback. The standard pair uniquely maximizes generalized scalar curvature in its torsion class, and every left-invariant Bismut--Ricci-flat pair on a compact simple group is standard, answering questions of Fusi, Lafuente and Stanfield and of Lauret and Will. On the product of two copies of a compact simple group, nonstandard stationary pairs in the Cartan class are known; we construct nonstationary flows with the exact initial form $H_0=\rho$ and nonstandard limits, disproving the general semisimple case of the convergence conjecture of Garcia-Fernandez and Streets. The proofs use a proper nonincreasing energy and a decomposition of Killing fields that parametrizes the stationary pairs by nilpotent Lie algebra endomorphisms.
\end{abstract}

\begin{keyword}
Generalized Ricci flow \sep Generalized scalar curvature \sep Compact Lie groups \sep Bismut--Ricci-flat metrics
\MSC[2020] 53E20 \sep 53C30
\end{keyword}
\end{frontmatter}

\section{Introduction}
Generalized Ricci flow couples the evolution of a Riemannian metric $g$ to that of a closed three-form $H$. On a compact manifold it is given by
\begin{equation}\label{eq:grf}
 \dot g=-2F,\qquad \dot H=-d\delta_gH,\qquad
 F=\Ric(g)-\tfrac14H_g^2,
\end{equation}
Here $\delta_g$ is the formal adjoint of $d$. The symmetric tensor $H_g^2$ is defined in a $g$-orthonormal frame by $H_g^2(U,V)=\sum_{i,j}H(U,e_i,e_j)H(V,e_i,e_j)$. The form equation preserves the de Rham class of $H$. In this paper we study left-invariant solutions on compact semisimple Lie groups. We prove convergence in prescribed Cartan classes, determine the maximum of generalized scalar curvature and describe the stationary pairs. The distinction between simple groups and products then leads to different conclusions for the possible limits.

The natural geometric setting for the coupled equations is an exact Courant algebroid. Generalized metrics, described by a metric and a two-form, belong to the generalized geometry developed in \cite{Gu2011}. The Courant-algebroid formulation of Ricci flow and its compatibility with duality were studied by \v{S}evera and Valach~\cite{SeVa2017}, Garcia-Fernandez~\cite{Ga2019} and Streets~\cite{St2017}. Rubio and Tipler~\cite{RuTi2020} developed the automorphism group and the moduli space of generalized metrics. We use this language only to explain the fixed torsion class and its symmetries, and work throughout with the tensor pair $(g,H)$.

The analytic structure of generalized Ricci flow also extends several features of Ricci flow. Oliynyk, Suneeta and Woolgar~\cite{OlSuWo2006} studied its variational formulation, and Streets developed regularity and expanding entropy in~\cite{St2008} and weighted scalar-curvature and entropy identities in~\cite{St2023}. Pluriclosed flow provides a Hermitian source of related equations through the work of Streets and Tian~\cite{StTi2010,StTi2012}. Its homogeneous long-time behavior was studied by Arroyo and Lafuente~\cite{ArLa2019}, and non-K\"ahler Calabi--Yau geometry and pluriclosed flow were developed further by Garcia-Fernandez, Jordan and Streets~\cite{GaJoSt2023}. These relations motivate the study of stationary metrics with skew torsion, although we impose no complex structure here. We refer to the monograph~\cite{GaSt2021} for the general theory.

Homogeneity turns the geometric evolution into a finite-dimensional system, but does not remove questions of compactness or convergence. For ordinary Ricci flow, introduced by Hamilton~\cite{Ha1982}, the curvature formulas of Milnor~\cite{Mi1976} make the Lie-group case particularly explicit. Lauret characterized nilsolitons algebraically and used variational methods to construct Einstein solvmanifolds~\cite{La2001,La2002}. Bracket flow gives a complementary description of homogeneous Ricci flow~\cite{La2011,La2013}, and the algebraic structure of homogeneous solitons was studied by Lafuente and Lauret~\cite{LaLa2014}. B\"ohm and Lafuente obtained expanding soliton limits of parabolic blow-downs of immortal homogeneous Ricci flows~\cite{BoLa2018} and scalar-curvature-normalized convergence on solvmanifolds of real type~\cite{BoLa2019}. For generalized Ricci flow, the additional three-form evolves in a fixed cohomology class and must be controlled together with the metric.

Fusi, Lafuente and Stanfield~\cite{FuLaSt2024} developed homogeneous generalized Ricci flow and its bracket-flow formulation. They proved immortality of invariant flows on solvmanifolds and, for harmonic initial torsion, convergence of the rescaled generalized metrics on the universal cover of a nilmanifold to an expanding generalized Ricci soliton in the generalized Cheeger--Gromov topology. Their second paper~\cite{FuLaSt2026} proves subsequential convergence of rescaled nonflat invariant solutions on simply connected Lie groups with positive semidefinite Killing form and studies compact BRF pairs and their stability. Earlier work of Paradiso~\cite{Pa2021} also concerns generalized Ricci flow and algebraic solitons on nilpotent Lie groups. In this setting, Chen, Chen and Zhu~\cite{ChChZh2026a} proved that every left-invariant generalized nilsoliton has harmonic torsion in its preferred splitting. Their subsequent work~\cite{ChChZh2026b} obtains a Killing-form identity for algebraic generalized Ricci solitons, proves harmonicity when the Killing form is positive semidefinite without assuming unimodularity, and constructs examples with nonzero torsion on nilpotent Lie algebras admitting no classical nilsoliton. In the compact semisimple setting considered here, the Killing form is negative definite and the Cartan class supplies a different source of compactness.

The stationary pairs of \eqref{eq:grf} satisfy
\begin{equation}\label{eq:brf}
 \Ric(g)=\tfrac14H_g^2,\qquad dH=0,\qquad\delta_gH=0.
\end{equation}
They are called \emph{Bismut--Ricci-flat} (BRF), since these equations are equivalent to vanishing of the full Ricci tensors of $\nabla^\pm=\nabla^g\pm\tfrac12H^\sharp$, where $g(H^\sharp(U,V),W)=H(U,V,W)$. Flatness of $\nabla^\pm$ is a stronger condition. Compact homogeneous BRF pairs with nonzero curvature were constructed by Podest\`a and Raffero~\cite{PoRa2023,PoRa2024} and studied further by Lauret and Will~\cite{LaWi2023}, with the corrections recorded in their corrigendum. Guti\'errez~\cite{Gu2025} studied generalized Ricci flow in specified invariant families on aligned homogeneous spaces. Chen, Chen and Zhu~\cite{ChChZh2026c} formulated the invariant flow on compact homogeneous spaces in a fixed torsion class, including matrix variables for equivalent real-type isotropy summands. Their Ledger--Obata example shows that stability in a restricted diagonal family can coexist with a hyperbolic saddle in the full invariant system. These results show that the transitive action, torsion class and allowed invariant variations must be specified when comparing existence and stability statements.

Let $G$ be a nontrivial compact connected semisimple Lie group with Lie algebra $\gfr$, and let $Q$ be a positive bi-invariant metric. Its Cartan three-form and the associated constant are
\begin{equation}\label{eq:rho}
 \rho(X,Y,Z)=Q([X,Y],Z),\qquad
 c_Q=|\rho|_Q^2=\tr_Q(-B_{\gfr}),
\end{equation}
where $B_{\gfr}(X,Y)=\tr(\operatorname{ad}_X\operatorname{ad}_Y)$ is the Killing form. We use full tensor norms, so $|H|_g^2=\sum_{i,j,k}H(e_i,e_j,e_k)^2$. The form $\rho$ is closed and coclosed, and $\Ric(Q)=-\tfrac14B_{\gfr}=\tfrac14\rho_Q^2$. Thus $(Q,\rho)$ and $(Q,-\rho)$ are BRF. We call them the \emph{standard pairs}. The cohomological description of the Cartan classes comes from Lie algebra cohomology~\cite{ChEi1948} and is recalled in Section~\ref{sec:preliminaries}.

In what follows, all pairs are left-invariant and all cohomology groups have real coefficients. Fix $\varepsilon\in\{1,-1\}$. Our first result controls every trajectory with $[H_0]=\varepsilon[\rho]$.

\begin{maintheorem}\label{thm:flow}
For every left-invariant metric $g_0$ and closed left-invariant three-form $H_0$ with $[H_0]=\varepsilon[\rho]$, the solution of \eqref{eq:grf} from $(g_0,H_0)$ exists for all $t\geq0$. Its trajectory of pairs $(g_t,H_t)$ is contained in a compact subset of the space of positive metrics and closed three-forms in the prescribed class, and the entire solution converges smoothly to a BRF pair in that class. No rescaling or pullback by diffeomorphisms is needed.
\end{maintheorem}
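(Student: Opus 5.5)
We plan to reduce the flow to a finite-dimensional ODE on $\mathcal M\times(\varepsilon\rho+d\ext^2\gfr^*)$, where $\mathcal M$ is the cone of inner products on $\gfr$. Since $G$ is compact semisimple, $\HH^2(\gfr)=0$ and $\HH^3(\gfr)$ is spanned by the Cartan forms of the simple factors. Hence every closed left-invariant $H$ in the class $\varepsilon[\rho]$ can be written uniquely as $H=\varepsilon\rho+db$ with $b\in\ext^2\gfr^*$, and $d\colon\ext^2\gfr^*\to\ext^3\gfr^*$ is injective because $\HH^1(\gfr)=0$. The flow therefore becomes a smooth ODE for $(g,b)$, with $\dot b=-\delta_g H$ (up to an exact correction that vanishes since $d$ is injective on $2$-forms). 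Local existence and uniqueness are automatic. Long-time existence and convergence will follow from three ingredients: a Lyapunov function, a properness statement for it, and a {\L}ojasiewicz-type argument.

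\textbf{Step 1: energy.} Take the homogeneous generalized Einstein--Hilbert functional
\[
 \Egen(g,H)=-\Sgen(g,H)\,\Vol(g)^{2/n},\qquad \Sgen=\scal(g)-\tfrac1{12}|H|_g^2,
\]
or, if volume is not conserved, the unnormalized $-\Sgen$. On unimodular groups the homogeneous $\lambda$-functional reduces to $\Sgen$. The standard variation formula then gives
\[
 \tfrac{d}{dt}\Sgen=2|F|_g^2+\tfrac12|\delta_gH|_g^2\ \ge0
\]
along \eqref{eq:grf}. Equality holds exactly at BRF pairs, since in the unimodular case $\Ric=\tfrac14H^2$ together with $\delta H=0$ is the stationary condition. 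So $-\Sgen$ is nonincreasing.

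\textbf{Step 2: properness.} This is the main obstacle. We must show that sublevel sets $\{-\Sgen\le C\}$ in $\mathcal M\times(\varepsilon\rho+d\ext^2)$ are compact. Write $g=Q(P\cdot,\cdot)$ and use Milnor-type formulas. The scalar curvature is
\[
 \scal(g)=-\tfrac12\tr_g B_{\gfr}-\tfrac14|[\cdot,\cdot]|_g^2 .
\]
Then expand $|\varepsilon\rho+db|_g^2$. The key point is that the torsion term has the same bracket structure: $\rho$ is built from the bracket, and $db(X,Y,Z)=-\mathfrak S\, b([X,Y],Z)$. One should show that $\tfrac1{12}|\varepsilon\rho+db|_g^2+\tfrac14|[\cdot,\cdot]|_g^2$ dominates $-\tfrac12\tr_gB$ plus a coercive quantity that tends to $\infty$ whenever $P\to\partial\mathcal M$ (an eigenvalue tends to $0$ or $\infty$) or $|b|\to\infty$. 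I would first treat diagonal $P$ in a $Q$-orthonormal basis adapted to a maximal torus. There the bracket norms become explicit sums $\sum c_{ijk}^2\,p_k/(p_ip_j)$, and cohomological nontriviality of $\rho$ prevents $\rho+db$ from becoming small. I would then use an inequality of AM--GM type to show the bracket contributions blow up as eigenvalues degenerate. Degeneration toward $\infty$ is what fails for ordinary Ricci flow, and here it must be controlled by the fixed class. Since $|\rho+db|_g^2$ scales like $\lambda^{-3}$ while $\scal$ scales like $\lambda^{-1}$, I expect a genuine inequality is needed. The honest approach is to prove
\[
 \tfrac1{12}|\varepsilon\rho+db|_g^2\ \ge\ \tfrac1{12}\inf_{b'}|\rho+db'|_g^2,
\]
identify the minimizer as the $g$-harmonic representative, and bound its norm below by a positive multiple of $\sum_k c_k/(\text{volume-type products})$ from pairing with a dual homology cycle. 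This is the step I would expect to work hardest on.

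\textbf{Step 3: convergence.} Properness plus monotonicity confines the trajectory to a compact set, so the solution exists for all $t\ge0$. The $\omega$-limit set is nonempty, compact, and consists of critical points of $\Sgen$, that is, BRF pairs in the class. Since $\Sgen$ is real-analytic on the finite-dimensional space and the flow is the gradient flow of $\Sgen$ for a suitable metric (up to a bounded positive-definite factor), the {\L}ojasiewicz gradient inequality gives finite length of the trajectory. Hence the whole solution converges to a single BRF limit. Smooth convergence is automatic, since the pairs are left-invariant and determined by finitely many parameters, so no rescaling or diffeomorphism pullback is involved.
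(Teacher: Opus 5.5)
There is a genuine gap, and it is exactly where you expected the hardest work. Step~2 is not merely difficult; it is false, because the superlevel sets of $\Sgen$ in the class are not compact. Along the ray $(\lambda Q,\rho)$ one has $\Sgen(\lambda Q,\rho)=\frac{c_Q}{4\lambda}-\frac{c_Q}{12\lambda^3}$. This is positive for $\lambda>1/\sqrt3$ and tends to $0$ as $\lambda\to\infty$, so $\{\Sgen\ge s\}$ is unbounded for every $s\le0$. Initial pairs with $\Sgen(g_0,H_0)\le0$ exist in the class, for instance at small scale or with $H_0=\rho+d\beta$ and $d\beta$ large. For such data, monotonicity of $\Sgen$ does not confine the trajectory.

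On $K\times K$ the same failure occurs at positive levels: $g=q\oplus\lambda q$, $H=\rho$ gives $\Sgen=\frac d6+\frac d{4\lambda}-\frac d{12\lambda^3}\to\frac d6$. Expansion is the degeneration in which every term of $\Sgen$ decays, including any harmonic-representative lower bound for $|H|_g^2$. So no inequality of the type you propose can make $-\Sgen$ coercive. The volume-normalized variant has the same defect along $(\lambda Q,\rho)$, and your variation formula does not show that it is monotone for the unnormalized flow.

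The missing idea is a second Lyapunov function that detects the cohomology class. The paper takes $\Egen=\sum_i(|X_i|_g^2+|\xi_{X_i}|_g^2)$, summed over a $Q$-orthonormal basis of right-invariant Killing fields. Here $\xi_X$ is the unique coclosed primitive of $\iota_XH$: $\HH^2=0$ gives existence and $\HH^1=0$ gives uniqueness. The class $[\rho]$ forces $\xi_X(Y)+\xi_Y(X)=2Q(X,Y)$. Writing $H=\rho+d\beta_K$, this gives $\Egen=\tr P+\tr\bigl(P^{-1}(\id+K^*K)\bigr)\ge\tr P+\tr P^{-1}$, which controls both eigenvalue directions and the exact part of $H$. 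A Weitzenb\"ock computation then writes $\dot\Egen$ as minus a sum of squares. Only after compactness is secured does $\Sgen$ enter, essentially as in your Steps~1 and~3.

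There is also a smaller error: $d\colon\ext^2\gfr^*\to\ext^3\gfr^*$ is not injective. $\HH^1=0$ gives injectivity on one-forms, and $\HH^2=0$ then identifies the kernel on two-forms with $d(\gfr^*)\cong\gfr^*\neq0$. So $b$ is determined only modulo exact two-forms, and any compactness statement must be made for $H$, or for $b$ modulo this kernel. The clean way to run Step~3 is on $H\in\varepsilon\rho+E$ with the metric $\tfrac12\langle v,w\rangle_g+\langle(d\delta_g|_E)^{-1}\eta,\zeta\rangle_{\ext,g}$. For this metric the flow is exactly the gradient flow of $\Sgen$.
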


The conclusion concerns the original tensor pairs on the fixed group. The limiting pair need not be standard when $G$ has several simple factors. The compactness assertion in Theorem~\ref{thm:flow} is also the starting point for the variational question considered next.

The generalized scalar curvature is
\begin{equation}\label{eq:scalar}
 \Sgen(g,H)=\scal(g)-\frac1{12}|H|_g^2.
\end{equation}
For invariant tensors it is a real-valued function on a finite-dimensional parameter space. Its critical points in a fixed torsion class on a compact Lie group are precisely the BRF pairs~\cite[Corollary~3.3]{FuLaSt2026}. The standard value is $\Sgen(Q,\pm\rho)=c_Q/6$. Fusi, Lafuente and Stanfield constructed non-bi-invariant BRF pairs on products in the same Cartan class and asked whether the standard value is nevertheless the global maximum~\cite[Theorem~E and Question~1.1]{FuLaSt2026}. The following theorem answers this question and determines the equality case.

\begin{maintheorem}\label{thm:maximum}
For every left-invariant metric $g$ and every closed left-invariant three-form $H$ with $[H]=\varepsilon[\rho]$, one has
\begin{equation}\label{eq:maximum}
 \Sgen(g,H)\leq\frac{c_Q}{6}.
\end{equation}
Equality holds if and only if $g=Q$ and $H=\varepsilon\rho$.
\end{maintheorem}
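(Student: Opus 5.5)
The plan is to write $\Sgen(g,H)$ for a left-invariant pair explicitly in terms of the structure constants and the metric, and then to complete squares so that
\[
 \Sgen(g,H)=\frac{c_Q}{6}-(\text{a sum of nonnegative terms}),
\]
with every nonnegative term vanishing exactly at $(Q,\varepsilon\rho)$.

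\textbf{Step 1: reduce to a finite-dimensional problem.} Since $G$ is compact, invariant de Rham cohomology agrees with Lie algebra cohomology. Since $\gfr$ is semisimple, $\HH^2(\gfr)=0$. Hence every closed invariant $H$ with $[H]=\varepsilon[\rho]$ can be written as
\[
 H=\varepsilon\rho+d\beta
\]
for a left-invariant two-form $\beta$, where $d\beta(X,Y,Z)=-\beta([X,Y],Z)-\beta([Y,Z],X)-\beta([Z,X],Y)$. I write $g=Q(A\cdot,\cdot)$ with $A$ positive and $Q$-selfadjoint.

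\textbf{Step 2: Milnor-type formula for the scalar curvature.} Because $\gfr$ is unimodular,
\[
 \scal(g)=-\tfrac12\tr_g B_{\gfr}-\tfrac14|\mu_g|_g^2,\qquad \mu_g(X,Y,Z)=g([X,Y],Z).
\]
Therefore
\[
 \Sgen(g,H)=\tfrac12\tr_g(-B_{\gfr})-\tfrac14|\mu_g|_g^2-\tfrac1{12}|H|_g^2.
\]
At $(Q,\varepsilon\rho)$ all three quantities $\tr_Q(-B_\gfr)$, $|\mu_Q|^2$ and $|\rho|^2$ equal $c_Q$. This gives the value $c_Q/2-c_Q/4-c_Q/12=c_Q/6$, which is the target.

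\textbf{Step 3: split $\mu_g$ and complete the square.} I split $\mu_g=\mu_g^{\mathrm{alt}}+\mu_g^{\perp}$ into its totally skew part and its $g$-orthogonal complement, so $|\mu_g|^2=|\mu_g^{\mathrm{alt}}|^2+|\mu_g^\perp|^2$. The term $-\tfrac14|\mu_g^\perp|^2$ is already nonpositive and vanishes iff $\operatorname{ad}$ is $g$-skew, i.e.\ iff $g$ is bi-invariant.

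For the skew part together with $H$, I want a cohomological pairing identity: the quantity
\[
 \langle H,\mu_g^{\mathrm{alt}}\rangle_g=\sum_{i,j}H(e_i,e_j,[e_i,e_j])
\]
should be controlled by the class $[H]$ and by $\tr_g(-B_\gfr)$. To get this I would decompose $H=\varepsilon\rho+d\beta$ and move $d$ onto the other factor: $\langle d\beta,\mu^{\mathrm{alt}}_g\rangle=\langle\beta,\delta_g\mu^{\mathrm{alt}}_g\rangle$. The aim is to show that this error is absorbed by a further square in $\beta$, using that $\ker\operatorname{ad}=0$ and that $\delta_g$ on invariant forms is algebraic. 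The target is an inequality of the shape
\[
 \tfrac12\tr_g(-B)-\tfrac14|\mu^{\mathrm{alt}}_g|^2-\tfrac1{12}|H|^2\le\frac{c_Q}{6}-\tfrac1{12}\bigl|H-\varepsilon\,3\mu^{\mathrm{alt}}_g\bigr|^2_g-(\text{terms}\ \ge0\ \text{in}\ A-\id).
\]
The last terms would come from the elementary inequality
\[
 \tr_g(-B)-\tfrac12|\mu^{\mathrm{alt}}_g|^2\le\tfrac12 c_Q .
\]
I would prove this inequality by diagonalising $A$ along a $Q$-orthonormal basis adapted to the simple ideals, where both sides become explicit sums over structure constants weighted by eigenvalues $a_i$, $a_j$, $a_k$, and then applying AM--GM to each triple.

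\textbf{Step 4: equality case.} Equality forces $\mu_g^\perp=0$, so $g$ is bi-invariant, hence a sum of multiples of $Q$ on the simple factors. The AM--GM step then forces all eigenvalues to equal $1$, so $g=Q$. The vanishing square gives $H=\varepsilon\rho$.

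\textbf{Main obstacle.} I expect the hardest part to be Step 3, namely the identity or inequality handling the exact part $d\beta$. The cross term $\langle d\beta,\mu_g\rangle_g$ does not vanish for non-bi-invariant $g$. Moreover, the existence of nonstandard BRF pairs on products shows that $\Sgen$ is not concave, so no naive convexity argument works. My first attempt would be to choose $\beta$ optimally for fixed $g$. Minimising $|H|_g^2$ over $\beta$ gives the $g$-harmonic representative, for which $\delta_gH=0$. For that representative I would use the harmonic identity $\langle H,d\beta\rangle=0$ to express $|H|^2_g$ as $\langle\varepsilon\rho,H\rangle_g$, and then apply Cauchy--Schwarz against $\mu_g^{\mathrm{alt}}$.
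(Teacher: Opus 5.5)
Your proposal has a genuine gap. It sits exactly at the step you flag as the main obstacle, and the two explicit inequalities you display in Step~3 are false.

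\textbf{The displayed inequalities fail.} The target inequality fails at the standard pair itself. Since $\mu_Q(X,Y,Z)=Q([X,Y],Z)$ is already alternating, $\mu_Q^{\mathrm{alt}}=\rho$, so at $(Q,\varepsilon\rho)$ one has $H-3\varepsilon\mu_Q^{\mathrm{alt}}=-2\varepsilon\rho$. The right-hand side is then at most $c_Q/6-4c_Q/12=-c_Q/6$, while the left-hand side equals $c_Q/6$. Your equality-case deduction from that square is therefore also wrong. The ``elementary inequality'' $\tr_g(-B_{\gfr})-\frac12|\mu_g^{\mathrm{alt}}|_g^2\le\frac12c_Q$ fails for $g=aQ$ with $a<1$, since its left side is $c_Q/(2a)$.

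This is not a matter of adjusting constants. Any bound involving $g$ alone is homogeneous of degree $-1$ under $g\mapsto ag$. Without the class constraint, $\Sgen$ is unbounded above, e.g.\ $\Sgen(aQ,0)=\scal(aQ)=c_Q/(4a)$. So the hypothesis $[H]=\varepsilon[\rho]$ must enter as a quantitative lower bound for $|H|_g^2$ over the whole class. That bound must be uniform over non-bi-invariant $g$ and coupled to the metric scale.

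\textbf{The missing lower bound.} Nothing in the proposal produces such a bound. For the $g$-harmonic representative $H_g$ you get $|H_g|_g^2=\langle\varepsilon\rho,H_g\rangle_g$. Cauchy--Schwarz then gives $|H_g|_g\le|\rho|_g$, which is the wrong direction for an upper bound on $\Sgen$. A lower bound has to come from pairing with a $g$-coclosed test form $\omega$, via $|H|_g\ge|\langle\rho,\omega\rangle_g|/|\omega|_g$. But $\mu_g^{\mathrm{alt}}$ is not $g$-coclosed in general. For instance, on $K\times K$ with $g=P\otimes q$ as in \eqref{eq:product-family}, $\langle\mu_g,d\beta\rangle_g$ is a nonzero multiple of $z(x-y)$. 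A usable coclosed test object, or some other uniform way to convert the class into a lower bound on $|H|_g$, is the missing idea.

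\textbf{How the paper avoids this.} The paper never bounds $\Sgen$ at an arbitrary pair by algebra. Instead it argues as follows:
\begin{itemize}
\item The flow is the ascending gradient flow of $\Sgen$ (Lemma~\ref{lem:scalar-monotonicity}).
\item The energy $\Egen$ of Proposition~\ref{prop:properness} is proper and nonincreasing, so every trajectory subconverges to a BRF pair in the class.
\item At such a pair, $\Sgen=\frac16\tr_R(-B_{\gfr})\le c_Q/6$ for an invariant inner product $R\ge Q$ (Proposition~\ref{prop:critical-bound}).
\end{itemize}
In both the energy and the critical-value bound, the class enters through the pairing identity $\xi_X(Y)+\xi_Y(X)=2Q(X,Y)$ of Lemma~\ref{lem:primitives}. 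This identity plays the role of your missing lower bound. The equality case then follows because constant $\Sgen$ along the flow forces the initial pair to be BRF with $R=Q$.
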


For $Q=-B_{\gfr}$ the maximum is $\dim G/6$. The statement requires neither volume normalization nor harmonicity of $H$. The metric $Q$ may have independent positive scales on the simple ideals. Thus the theorem applies to Cartan classes whose coefficients are nonzero and have the same sign.

To understand the other critical points, we establish a one-to-one correspondence between left-invariant BRF pairs in the fixed Cartan class $[H]=[\rho]$ and Lie algebra endomorphisms of $\gfr$ that are nilpotent as linear maps (Theorem~\ref{thm:parametrization}). The standard pair corresponds to the zero endomorphism. Since a simple Lie algebra has no nonzero nilpotent endomorphisms, this correspondence yields the rigidity result below. Lauret and Will asked whether every left-invariant BRF pair on a compact simple group is standard, after proving this when the metric is bi-invariant or the three-form is a fixed Cartan form~\cite[Introduction and Proposition~6.1]{LaWi2023}. We obtain the following answer.

\begin{maintheorem}\label{thm:rigidity}
Let $G$ be compact and connected with nonabelian simple Lie algebra, and set $Q_0=-B_{\gfr}$ and $\rho_0=Q_0([\cdot,\cdot],\cdot)$. Every left-invariant BRF pair on $G$ has the form
\begin{equation}\label{eq:simple-classification}
 g=aQ_0,\qquad H=\varepsilon a\rho_0,
 \qquad a>0,\quad\varepsilon\in\{1,-1\}.
\end{equation}
Conversely every such pair is BRF, and both connections $\nabla^\pm$ are flat. Bi-invariance holds for the original multiplication on $G$.
\end{maintheorem}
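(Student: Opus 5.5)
We would derive Theorem~\ref{thm:rigidity} from the parametrization of BRF pairs in a Cartan class (Theorem~\ref{thm:parametrization}, announced above), reducing an arbitrary BRF pair on a simple group to one in a Cartan class. The argument has three steps.

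\emph{Step 1: locate the torsion class.} Since $\gfr$ is simple and nonabelian, $\HH^3(\gfr)$ is one-dimensional, spanned by $[\rho_0]$. So any closed left-invariant $H$ satisfies $[H]=\lambda[\rho_0]$ for some $\lambda\in\mathbb R$. We first exclude $\lambda=0$. If $H$ is exact and coclosed, then $H$ is harmonic and exact, so $H=0$. Indeed, for left-invariant forms on a unimodular group, $\delta_g$ computed from invariant integration is the Lie-algebra adjoint of $d$, so $\langle H,d\beta\rangle_g=\langle\delta_gH,\beta\rangle_g=0$. Then $\Ric(g)=0$ for a left-invariant metric on a compact group with $\gfr$ semisimple. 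This is impossible, because by Milnor the group would have to be flat, forcing $\gfr$ solvable; alternatively, Bochner on a compact manifold with $b_1=0$ gives no Ricci-flat invariant metric other than flat tori. Hence $\lambda\neq0$.

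\emph{Step 2: normalize and apply the parametrization.} Write $\lambda=\varepsilon a$ with $a>0$ and $\varepsilon=\pm1$. Set $Q=aQ_0$, so $\rho=Q([\cdot,\cdot],\cdot)=a\rho_0$ and $[H]=\varepsilon[\rho]$. If $\varepsilon=-1$, we replace $H$ by $-H$; the BRF equations \eqref{eq:brf} are invariant under $H\mapsto -H$, since $H_g^2$ is quadratic. We then reduce to the class $[H]=[\rho]$. By Theorem~\ref{thm:parametrization}, BRF pairs in this class correspond bijectively to Lie algebra endomorphisms $N$ of $\gfr$ that are nilpotent as linear maps, with $N=0$ giving the standard pair $(Q,\rho)$.

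It remains to check that such an $N$ vanishes. The kernel of a Lie algebra endomorphism is an ideal, so $\ker N$ is $0$ or $\gfr$. If $\ker N=0$, then $N$ is injective, hence invertible, which contradicts nilpotency on the nonzero space $\gfr$. Thus $\ker N=\gfr$ and $N=0$. Therefore $g=Q=aQ_0$ and $H=\varepsilon a\rho_0$, which is \eqref{eq:simple-classification}.

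\emph{Step 3: the converse.} For $g=aQ_0$ and $H=\varepsilon a\rho_0$ we have $H^\sharp(X,Y)=\varepsilon[X,Y]$. The Levi-Civita connection of a bi-invariant metric is $\nabla_XY=\tfrac12[X,Y]$ on left-invariant fields. Hence $\nabla^\pm_XY$ equals either $0$ or $[X,Y]$; these are the left- and right-invariant (Cartan--Schouten) flat connections. Flatness follows from the Jacobi identity, and it implies BRF. Bi-invariance of $aQ_0$ holds for the original multiplication, since $Q_0=-B_{\gfr}$ is $\Ad$-invariant.

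\emph{Main obstacle.} All of the real work sits in Theorem~\ref{thm:parametrization}, which we may assume. Within this proof, the only delicate point is Step~1. There we must ensure that the correspondence applies to an arbitrary class, which requires $\lambda\neq0$ and a rescaling of $Q$. We must also justify that exact coclosed invariant forms vanish, via the unimodular adjointness of $d$ and $\delta_g$ on invariant forms.
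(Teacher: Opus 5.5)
Your proposal is correct and follows the paper's proof. It places $[H]$ in the one-dimensional $\HH^3$, excludes the exact class, rescales to $Q=|h|Q_0$ and flips the sign to land in $[\rho]$, applies Theorem~\ref{thm:parametrization}, and uses simplicity to force the nilpotent $N$ to vanish. The only difference is how you rule out $\Ric(g)=0$: the paper uses exactly the Bochner argument you list as an alternative (translation Killing fields would be parallel, hence dual to harmonic one-forms, contradicting $\HH^1(G)=0$), which is self-contained, whereas your primary route needs two imported results, the theorem that homogeneous Ricci-flat metrics are flat (Alekseevskii--Kimelfeld) and Milnor's result that flat metric Lie algebras are solvable.
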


This conclusion includes every torsion class on a compact simple group. For a BRF pair, invariance of the metric already implies invariance of its harmonic three-form, because left translations act trivially on cohomology. Theorem~\ref{thm:rigidity} therefore also applies when left-invariance is initially assumed only for $g$.

The structure of the stationary set is closely related to stability, but does not determine every forward limit. Raffero and Vezzoni~\cite{RaVe2021} established dynamical stability and instability results near pairs consisting of a Ricci-flat metric and a vanishing three-form. Lee computed the second variation and proved linear stability for compact Bismut-flat pairs~\cite[Theorem~1.1 and Corollary~1.2]{Le2024}. He also proved linear stability of compact Bismut Hermitian--Einstein metrics in the pluriclosed setting~\cite{Le2025b} and established dynamical stability criteria under nonpositivity of the second variation and integrability of infinitesimal deformations~\cite{Le2025a}. In the invariant setting, Fusi, Lafuente and Stanfield proved local stability of standard pairs on compact simply connected semisimple groups and instability of certain nonstandard pairs on products~\cite[Theorems~D and E]{FuLaSt2026}. Beyond the compact homogeneous setting, Podest\`a and Raffero constructed complete positively curved steady generalized Ricci solitons on $\mathbb R^3$ with $\mathrm{SO}(3)$ symmetry~\cite{PoRa2025}. None of these local or self-similar results identifies the limit of every trajectory in a Cartan class.

Garcia-Fernandez and Streets conjectured that on a compact semisimple group every left-invariant solution with $H_0=\pm\rho$ exists for all positive time and converges to $(Q,\pm\rho)$~\cite[Conjecture~4.14, p.~74]{GaSt2021}. They verified this for $\mathrm{SU}(2)$ in~\cite[Example~4.15]{GaSt2021}. Theorems~\ref{thm:flow} and~\ref{thm:rigidity} prove the simple-group case under the weaker condition $[H_0]=\pm[\rho]$. On products, the nonstandard BRF pairs of~\cite{FuLaSt2026} show that fixing the class does not force the standard limit. The conjecture prescribes the initial form itself, so it remains to determine whether a nonstationary solution with $H_0=\rho$ can approach a nonstandard pair.

Let $K$ be compact and connected with nonabelian simple Lie algebra $\mathfrak k$, set $q=-B_{\mathfrak k}$ and take $Q=q\oplus q$ on $K\times K$. Define the left-invariant two-form
\begin{equation}\label{eq:product-beta}
 \beta((U,V),(U',V'))=q(V,U')-q(U,V').
\end{equation}
Matrices in the following statement act on the two factors of $\mathfrak k\oplus\mathfrak k$.

\begin{maintheorem}\label{thm:cartan}
There is $M_0>9$ such that, for every $M\ge M_0$, the solution with initial data $g_0=\left(\begin{smallmatrix}M&-3\\-3&1\end{smallmatrix}\right)\otimes q$ and $H_0=\rho,$ exists for all positive time and converges smoothly, without rescaling or pullback, to
\begin{equation}\label{eq:cartan-limit}
 g_*=\begin{pmatrix}3&-1\\-1&1\end{pmatrix}\otimes q,
 \qquad H_*=\rho+d\beta.
\end{equation}
\end{maintheorem}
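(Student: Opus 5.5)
Theorem~\ref{thm:flow} already gives long-time existence and smooth convergence to some BRF pair in the class $[\rho]$, so only the limit has to be identified. The plan is to reduce the flow to a low-dimensional invariant system, locate the BRF pairs inside it, and use a conserved or monotone quantity to pick out which one the trajectory reaches when $M$ is large.

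\textbf{Step 1: invariant ansatz.} Let $\sigma$ be the involution swapping the two factors composed with $(U,V)\mapsto(-U,-V)$ on one side, or more simply use the symmetry group $\Ad(K)$ acting diagonally. Under diagonal $\Ad(K)$-invariance, every invariant metric has the form $A\otimes q$ with $A$ a positive symmetric $2\times2$ matrix. Every invariant closed three-form in the class $[\rho]$ has the form $\rho+d(s\beta)$ with $s\in\mathbb R$, because $\beta$ spans the relevant invariant two-forms modulo closed ones. Uniqueness of solutions of the ODE keeps the flow inside this family. The flow therefore becomes an ODE in $(A,s)\in\Sym^+_2\times\mathbb R$, which is four-dimensional.

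Using the Milnor-type formulas, I would compute $\Ric(A\otimes q)$, the tensor $H^2$ for $H=\rho+s\,d\beta$, and $\delta_gH$, all explicitly in terms of $A$ and $s$. One checks $d\beta((U,V),\cdot,\cdot)$ via $d\beta(X,Y,Z)=-\beta([X,Y],Z)+\text{cyclic}$; this gives mixed cubic terms $q([U,U'],V'')$, etc.

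\textbf{Step 2: stationary points.} Solve the BRF equations in this family. By Theorem~\ref{thm:parametrization}, the BRF pairs correspond to nilpotent endomorphisms of $\mathfrak k\oplus\mathfrak k$. Within the ansatz these are the zero map and the maps $(U,V)\mapsto(0,U)$ or $(V,0)$, up to scaling conventions. So the stationary set in the family consists of the standard point $(I,0)$ and a few nonstandard points, one of which is $(g_*,H_*)$ with $A_*=\left(\begin{smallmatrix}3&-1\\-1&1\end{smallmatrix}\right)$ and $s=1$. I would verify this point directly from the Step~1 formulas.

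\textbf{Step 3: separating the basins.} This is the main obstacle. Which limit occurs depends on the initial data, and we need the flow from $A_0=\left(\begin{smallmatrix}M&-3\\-3&1\end{smallmatrix}\right)$, $s_0=0$ to end at $s=1$ rather than $s=0$. I would look for a quantity preserved or monotone along the reduced ODE, for example a combination of $\det A$, $A_{12}$ and $s$ tied to the two-form evolution $\dot s$. The aim is an invariant region $\mathcal R$, such as $\{A_{12}<0,\ s\ge 0,\ \Phi(A,s)\ge c\}$, that contains the initial data for $M\ge M_0$ but not the standard point. Since the limit must be BRF and lie in the closure of $\mathcal R$, it is then forced to be $(g_*,H_*)$.

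To construct $\mathcal R$, I would first analyse the regime $M\to\infty$ by rescaling time. For large $M$ the first factor is nearly collapsed in the dual sense, and the off-diagonal term $-3$ drives $s$ upward quickly. I would show that $s$ overshoots a threshold, meaning it crosses the stable manifold of the unstable standard direction, before $A$ relaxes. Instability of the relevant nonstandard pairs from \cite[Theorem~E]{FuLaSt2026} needs care here: $(g_*,H_*)$ must be stable or at least attract this trajectory. I would therefore linearize at $(A_*,1)$ to confirm it is a sink within the family. Combining this with the invariant region and the continuity in $M$ of the large-$M$ asymptotic trajectory yields $M_0$. The bound $M_0>9$ reflects the need for $\det A_0=M-9>0$.
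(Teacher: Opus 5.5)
Your reduction in Step 1, your list of stationary points in the family, and your use of Theorem~\ref{thm:flow} are all fine. In coordinates $(x,y,z,r)$ the stationary points are $(1,1,0,0)$, $p=(3,1,-1,1)$ and $(1,3,-1,-1)$. Theorem~\ref{thm:flow} legitimately supplies existence and convergence to one of these three points. The gap is Step 3, which is the whole content of the theorem.

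\textbf{The local tactic fails.} The one concrete tactic you propose there, showing $(g_*,H_*)$ is a sink in the family, cannot work.
\begin{itemize}
\item The linearization of the reduced system at $p$ has eigenvalues $-\tfrac12,0,-1,-\tfrac32$.
\item The surface $\Sigma=\{y=1,\ z=-r\}$ is invariant and contains both the standard point and $p$. On it, put $u=x-2r-1$ and $v=r-1$. Then $\dot u=-\lambda u$ and $\dot v=-\alpha v^2-\gamma uv$ with $\lambda,\alpha,\gamma>0$ near $p$.
\item So $p$ attracts only from the side $v>0$. The orbit $(1+2r,1,-r,r)$ with $0<r<1$ lies in the family, leaves $p$, and converges to the standard pair. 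This is exactly the instability of Fusi--Lafuente--Stanfield that you flagged.
\end{itemize}
The separatrix you need is therefore not attached to the standard pair, which is locally stable. It is the boundary of a one-sided basin of $p$. That basin must be found nonlinearly, for example as a forward-invariant cone $u\ge C_0\eta$, $v\ge C_0\eta$, where $\eta$ is the distance from $\Sigma$. The cone is forward invariant because the normal directions contract at rate at least $\tfrac34$ near $p$, faster than $u$.

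\textbf{The global step fails as described.} An unspecified invariant region combined with continuity in $M$ of the large-$M$ trajectory does not give $M_0$. The natural monotone quantities do not separate the basins: $\Sgen$ increases toward its maximum at the standard pair, and by \eqref{eq:proper-energy} $\Egen$ is globally minimized there.

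In $s=1/x$ the boundary system at $s=0$ can be solved. From $(y,z,r)=(1,-3,0)$ one gets $y=1$, $z=-\tfrac32(1+e^{-t})$ and $r=\tfrac32(1-e^{-t})$. So $r$ does overshoot to about $3/2$, and the trajectory approaches $\Sigma$ within a fixed time. This much of your heuristic is right. Afterwards, however:
\begin{itemize}
\item $\dot x$ is of order $-\tfrac12$, so reaching a fixed value $x=X$ takes time of order $M$, not a bounded time.
\item During this passage $r-1$ decays like a power of $X/\ell_M$, where $\ell_M=M+O(1)$.
\item As $M\to\infty$ the arrival point tends to $r=1$, $\eta=0$. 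That point lies on the boundary of the basin of $p$, so no soft continuity argument can place it inside.
\end{itemize}

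\textbf{The missing idea} is a quantitative comparison on the strip $x\ge X$. You need estimates there, uniform in $M$, of the form
\begin{itemize}
\item $\dot\eta\le-\tfrac12\eta$,
\item $\dot r=-a(x,r)(r-1)/x+O(\eta)$ with $\tfrac14\le a\le 4$.
\end{itemize}
From these, at $x=X$ one gets $r-1\ge\tfrac16(X/\ell_M)^{16}$ but $\eta\le\eta_0e^{-(\ell_M-X)/2}$, for a small fixed constant $\eta_0$. The polynomial lower bound against the exponential upper bound gives $\eta/(r-1)\to0$. This ratio is what puts the trajectory into the one-sided basin of $p$ for all large $M$. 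Without it, Step 3 remains a hope rather than an argument.
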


The limit is the pair of~\cite[Proposition~7.3]{FuLaSt2026}, corresponding to $N(U,V)=(0,U)$ in Theorem~\ref{thm:parametrization}. If $d=\dim K$, its generalized scalar curvature is $d/4$, whereas the standard value is $d/3$. The two pairs therefore cannot be identified by a diffeomorphism. Theorem~\ref{thm:cartan} supplies nonstationary solutions with the precise initial condition in the conjecture and disproves its general semisimple case. Replacing $H$ by $-H$ gives examples in the negative class.

We conclude the introduction by describing the proofs. Generalized scalar curvature is nondecreasing along the flow, but this does not by itself control degeneration of the metric or growth of the three-form. For each right-invariant Killing field $X$, we construct the unique coclosed one-form $\xi_X$ satisfying $d\xi_X=\iota_XH$. In the class $[\rho]$, these primitives satisfy $\xi_X(Y)+\xi_Y(X)=2Q(X,Y)$. Summing $|X|_g^2+|\xi_X|_g^2$ over a $Q$-orthonormal basis of translation fields gives a proper function of $(g,H)$. Its dissipation identity keeps every trajectory in a compact subset of the parameter space. We then express the flow as an analytic gradient flow and apply the \L ojasiewicz inequality~\cite{Lo1963} to obtain convergence of the entire trajectory. This use of an analytic gradient inequality is related to the convergence method developed by Simon~\cite{Si1983}.

At a BRF pair, the same construction applies to all Killing fields. The map $X\mapsto\xi_X^\sharp$ is an involution whose eigenspaces consist of the fields parallel for $\nabla^+$ and $\nabla^-$. The positive pairing on translation fields gives a global parallel frame and an invariant inner product $R\geq Q$. This comparison bounds the critical values of $\Sgen$, and monotonicity gives Theorem~\ref{thm:maximum}. The two projections onto parallel fields also yield the identity $R(X,Y)-R(NX,NY)=Q(X,Y)$, which forces the endomorphism $N$ to be nilpotent and gives Theorem~\ref{thm:rigidity}. Flatness connects this description with metric connections with skew torsion~\cite{AgFr2010} and, in the Hermitian setting, with the Bismut-flat structures studied in~\cite{WaYaZh2020}. The realization by left and right translations is related to transitive actions on compact group manifolds~\cite{Oz1977} and to descriptions by complementary Lie subalgebras~\cite{BuDeVe2012}.

For Theorem~\ref{thm:cartan}, invariance under right translations by the diagonal subgroup reduces the flow on $K\times K$ to four variables. A large initial metric coefficient allows the three-form to move away from its Cartan value while the trajectory approaches an invariant surface. At a fixed cross-section, the transverse error is exponentially small in that coefficient, whereas the displacement on the attracting side has a polynomial lower bound. The comparison places the trajectory in a region converging to the nonstandard pair. The same reduced equations give the eternal solution from the nonstandard pair to the standard pair suggested in~\cite[Remark~7.8]{FuLaSt2026}.

Section~\ref{sec:preliminaries} fixes the geometric conventions and the parameter space. Section~\ref{sec:energy} constructs the proper energy and proves Theorem~\ref{thm:flow}. Section~\ref{sec:involution} describes the Killing fields of stationary pairs, proves Theorems~\ref{thm:maximum} and~\ref{thm:rigidity}, and gives the parametrization. Section~\ref{sec:cartan} proves Theorem~\ref{thm:cartan} and describes the eternal solution.
\section{Preliminaries}\label{sec:preliminaries}

We fix the conventions needed for the energy identities and the description of stationary pairs. We first explain how a generalized metric determines the pair $(g,H)$. We then record the norm and curvature formulas, and identify the finite-dimensional space on which the invariant flow evolves.

Fix a closed three-form $H_0$ on a manifold $M$. On $TM\oplus T^*M$, the natural pairing and the $H_0$-twisted Dorfman bracket are
\begin{equation}\label{eq:courant-data}
 \begin{aligned}
 \langle X+\xi,Y+\eta\rangle
       &=\tfrac12\bigl(\xi(Y)+\eta(X)\bigr),\\
 [X+\xi,Y+\eta]_{H_0}
       &=[X,Y]+\mathcal L_X\eta-\iota_Yd\xi+\iota_Y\iota_XH_0.
 \end{aligned}
\end{equation}
The data in~\eqref{eq:courant-data}, together with projection to $TM$, define the standard exact Courant algebroid associated with $H_0$. A generalized metric is a maximal positive subbundle for this pairing. Its projection to $TM$ is an isomorphism, since a nonzero pure covector has zero norm and cannot belong to a positive subbundle. Consequently it is the graph of a bundle map $A:TM\to T^*M$. The symmetric part of $A$ is a positive metric $g$, and the skew-symmetric part is a two-form $b$. Thus the generalized metric has the unique expression $V_+(g,b)=\{X+g(X,\cdot)+\iota_Xb:X\in TM\}$. The restriction of the pairing to this graph is $g$, because the two terms involving $b$ cancel. See \cite[Section~2.1]{FuLaSt2024} and \cite[Section~4]{RuTi2020}.

The two-form changes the splitting rather than the underlying geometric object. Define $e^{-b}(X+\xi)=X+\xi-\iota_Xb$. It sends $V_+(g,b)$ to $V_+(g,0)$ and satisfies
\begin{equation}\label{eq:splitting-change}
 [e^{-b}u,e^{-b}v]_{H_0+db}=e^{-b}[u,v]_{H_0}.
\end{equation}
For example, the terms involving $b$ on the left are obtained from $-\mathcal L_X\iota_Yb+\iota_Yd\iota_Xb=-\iota_{[X,Y]}b-\iota_Y\iota_Xdb$. The last term cancels the change in the twisting form, giving~\eqref{eq:splitting-change}. In the splitting with zero two-form component, the twisting form is therefore $H=H_0+db$. This is the passage to the tensor pair $(g,H)$ used below. See also \cite[Proposition~2.8 and Remark~2.9]{FuLaSt2024}.

The form $H$ is closed and has class $[H_0]$. Potentials that differ by a closed two-form determine the same pair $(g,H)$, so the uniqueness and convergence statements in this paper concern these pairs. In the $(g,b)$ description, the form equation is $\dot b=-\delta_g(H_0+db)$. Taking its exterior derivative gives $\dot H=-d\delta_gH$, which is the second equation in~\eqref{eq:grf}. This also explains directly why the torsion class is fixed along the flow.

We use $g$ to raise and lower indices, and write $\nabla=\nabla^g$ for the Levi-Civita connection. Tensor norms without an exterior marker include all ordered indices. In a $g$-orthonormal frame, the relation with the exterior norm is
\begin{equation}\label{eq:norm-conventions}
 |\omega|_g^2=\sum_{i_1,\ldots,i_p}\omega(e_{i_1},\ldots,e_{i_p})^2
            =p!\,|\omega|_{\ext,g}^2
 \qquad (\omega\in\Lambda^pT^*M).
\end{equation}
In particular, $|H|_g^2=6|H|_{\ext,g}^2$ and $|\omega|_g^2=2|\omega|_{\ext,g}^2$ for a two-form. The contraction $H_g^2$ defined in the introduction uses the full tensor convention, and $\tr_gH_g^2=|H|_g^2$. These factors account for the coefficients in both the BRF equation and generalized scalar curvature. The adjoint $\delta_g$ uses exterior inner products. We mark integrated exterior inner products by $\langle\cdot,\cdot\rangle_{L^2,\ext}$ and integrated full tensor inner products by $\langle\cdot,\cdot\rangle_{L^2,g}$.

For a one-form $\alpha$ and its metric dual $W=\alpha^\sharp$, the skew part of $\nabla\alpha$ is $\tfrac12d\alpha$. Its symmetric part is $\tfrac12\mathcal L_Wg$. Orthogonality of these two parts and~\eqref{eq:norm-conventions} give
\begin{equation}\label{eq:one-form-splitting}
 \begin{aligned}
 \nabla\alpha&=\operatorname{Sym}\nabla\alpha+\tfrac12d\alpha,
       &\mathcal L_Wg&=2\operatorname{Sym}\nabla\alpha,\\
 |\nabla\alpha|_g^2
       &=|\operatorname{Sym}\nabla\alpha|_g^2
                       +\tfrac12|d\alpha|_{\ext,g}^2.
 \end{aligned}
\end{equation}
We will use~\eqref{eq:one-form-splitting} when converting the Bochner formula into sums of squares. In particular, if $W$ is Killing, then $\delta_g\alpha=-\operatorname{div}_gW=0$ and $\nabla\alpha=\tfrac12d\alpha$.

Set $T=H^\sharp$, so that $g(T(U,V),W)=H(U,V,W)$. For $\sigma\in\{1,-1\}$, write $\nabla^\sigma=\nabla^g+\tfrac\sigma2T$. These connections preserve $g$, since $g(T(U,V),W)$ is skew-symmetric in $V,W$, and their torsions are $\sigma T$. We use the curvature convention $\mathcal R(U,V)W=\nabla_U\nabla_VW-\nabla_V\nabla_UW-\nabla_{[U,V]}W$ and $\Ric(U,V)=\sum_i g(\mathcal R(e_i,U)V,e_i)$. The difference-of-connections calculation gives
\begin{equation}\label{eq:connection-curvature}
 \begin{aligned}
 \mathcal R^\sigma(U,V)W
   ={}&\mathcal R^g(U,V)W
       +\frac\sigma2\bigl((\nabla_U T)(V,W)-(\nabla_V T)(U,W)\bigr)\\
     &+\frac14\bigl(T(U,T(V,W))-T(V,T(U,W))\bigr).
 \end{aligned}
\end{equation}
To take the Ricci contraction in~\eqref{eq:connection-curvature}, work in a normal orthonormal frame at the point under consideration. The derivative term is $\tfrac\sigma2\sum_i(\nabla_{e_i}H)(U,V,e_i)=-\tfrac\sigma2\delta_gH(U,V)$. One quadratic term vanishes because it contains $H(e_i,\cdot,e_i)$. The other is $-\tfrac14\sum_{i,j}H(U,e_i,e_j)H(V,e_i,e_j)$. Hence the full Ricci tensor is
\begin{equation}\label{eq:connection-ricci}
 \Ric^{\nabla^\pm}=\Ric(g)-\tfrac14H_g^2\mp\tfrac12\delta_gH.
\end{equation}
The first two terms in~\eqref{eq:connection-ricci} are symmetric and the last is skew-symmetric. Their separate vanishing is exactly the BRF system~\eqref{eq:brf}. See \cite[Section~2.1]{LaWi2023}. Moreover, at a BRF pair the trace gives $\scal(g)=\tfrac14|H|_g^2$, and therefore $\Sgen(g,H)=\tfrac16|H|_g^2$.

On a compact oriented manifold without boundary, stationarity of the form equation is equivalent to coclosedness of $H$. Indeed, $d\delta_gH=0$ implies $0=\langle d\delta_gH,H\rangle_{L^2,\ext}=\|\delta_gH\|_{L^2,\ext}^2$. A closed and coclosed form is the unique harmonic representative of its de Rham class. If an isometry is isotopic to the identity, it preserves the class and commutes with the Hodge Laplacian, so uniqueness forces it to preserve the harmonic representative. We will use this observation at BRF pairs to obtain primitives for every Killing field.

For a one-form $\alpha$, the integrated Weitzenb\"ock formula is
\begin{equation}\label{eq:weitzenbock}
 \int_M\bigl(|d\alpha|_{\ext,g}^2+|\delta_g\alpha|_g^2\bigr)\,dV_g
 =\int_M\bigl(|\nabla\alpha|_g^2+
               \Ric(\alpha^\sharp,\alpha^\sharp)\bigr)\,dV_g.
\end{equation}
Here $dV_g$ is the Riemannian volume form. The exterior norm on $d\alpha$ and the full norm on $\nabla\alpha$ in~\eqref{eq:weitzenbock} are different, as specified in~\eqref{eq:norm-conventions}. This distinction is needed when applying~\eqref{eq:one-form-splitting}.

We now specialize to the compact connected semisimple group $G$. A left-invariant tensor is determined by its value at the identity. Metrics are positive inner products on $\gfr$, and invariant $p$-forms are elements of $\Lambda^p\gfr^*$. On these forms, $d$ is the Chevalley--Eilenberg differential. For a two-form it is
\begin{equation}\label{eq:invariant-differential}
 d\beta(X,Y,Z)=-\beta([X,Y],Z)+\beta([X,Z],Y)-\beta([Y,Z],X).
\end{equation}
In~\eqref{eq:invariant-differential}, the vectors denote left-invariant fields and the bracket is that of $\gfr$. Thus $d$ is a fixed linear operator on the exterior algebra, independent of the evolving metric.

Let $d\mu$ be normalized Haar measure. Averaging a form by $\mathcal A\omega=\int_G L_a^*\omega\,d\mu(a)$ produces an invariant form and commutes with $d$. Each left translation is isotopic to the identity because $G$ is connected, so averaging acts as the identity on de Rham cohomology. Conversely, if an invariant form $\omega$ is exact, say $\omega=d\gamma$, then $\omega=d(\mathcal A\gamma)$ has an invariant primitive. These two observations identify the de Rham cohomology with the cohomology of invariant forms and justify working in a fixed affine space of invariant representatives.

For a semisimple Lie algebra, the first two Whitehead lemmas consequently give
\begin{equation}\label{eq:cohomology}
 \HH^1(G)=\HH^2(G)=0.
\end{equation}
See \cite[Sections~2, 12 and 21]{ChEi1948}. The two vanishings in~\eqref{eq:cohomology} have different roles below. The second provides a primitive of every closed contraction $\iota_XH$, while the first makes a coclosed primitive unique.

Write $\gfr=\bigoplus_{a=1}^s\gfr_a$ for the decomposition into simple ideals. For each ideal let $\rho_a=(-B_{\gfr_a})([\cdot,\cdot],\cdot)$, extended by zero on the other ideals. The classes $[\rho_a]$ form a basis of $\HH^3(G)$. To see the effect of the global group, pass to the finite simply connected cover, which is a product of compact simply connected simple groups. Each factor has one-dimensional third cohomology, and the degree-three K\"unneth formula has no mixed terms because the first and second cohomology groups vanish. Deck transformations are central translations, hence are isotopic to the identity. They act trivially on real cohomology, so pullback under the finite covering identifies the third cohomology groups. This gives the asserted basis on $G$ itself. See also \cite[Section~6]{LaWi2023}.

Every positive bi-invariant metric is of the form $Q=\bigoplus_a q_a(-B_{\gfr_a})$ with $q_a>0$. Indeed, different simple ideals are orthogonal for an invariant inner product, and its restriction to a compact simple ideal is a positive multiple of the negative Killing form. It follows that $[\rho]=\sum_a q_a[\rho_a]$. Thus the cohomology classes used in the paper retain a positive contribution from every simple ideal.

The geometry of the standard pair can also be read directly from the Lie algebra. For invariant fields, the Koszul formula is
\begin{equation}\label{eq:invariant-koszul}
 2g(\nabla^g_XY,Z)
   =g([X,Y],Z)-g([Y,Z],X)+g([Z,X],Y).
\end{equation}
Ad-invariance of $Q$ reduces~\eqref{eq:invariant-koszul} to $\nabla^Q_XY=\tfrac12[X,Y]$. The Cartan form is ad-invariant, so the three terms in its covariant derivative cancel and $\nabla^Q\rho=0$. It is therefore closed and coclosed. Since $\rho^\sharp(X,Y)=[X,Y]$, the connections of $(Q,\rho)$ satisfy $\nabla^+_XY=[X,Y]$ and $\nabla^-_XY=0$ in a left-invariant frame. The curvature of $\nabla^+$ is $[X,[Y,Z]]-[Y,[X,Z]]-[[X,Y],Z]=0$, and $\nabla^-$ is flat because its connection coefficients vanish. Left-invariant fields are $\nabla^-$-parallel. A right-invariant field has coefficient $\Ad_{x^{-1}}U$ in the left-invariant frame at $x$. Differentiating this coefficient in a left-invariant direction $X$ gives $-[X,\Ad_{x^{-1}}U]$, which cancels the connection term of $\nabla^+$. Hence right-invariant fields are $\nabla^+$-parallel.

We record explicitly how the adjoint is represented on invariant forms. Let $d_p$ be the matrix of $d:\Lambda^p\gfr^*\to\Lambda^{p+1}\gfr^*$ in fixed bases, and let $G_p(g)$ be the Gram matrix of the exterior inner product. Since normalized $g$-volume is $d\mu$, integration multiplies every invariant pointwise inner product by the same factor $\Vol(g)$. This factor cancels in the adjoint identity, giving
\begin{equation}\label{eq:invariant-adjoint}
 \delta_{p+1,g}=G_p(g)^{-1}d_p^{\mathsf T}G_{p+1}(g).
\end{equation}
The entries of $G_p(g)$ are polynomial expressions in the coefficients of $g^{-1}$. Formula~\eqref{eq:invariant-adjoint} therefore proves analytic dependence of the restricted adjoint on a positive metric. The restriction is legitimate because left translations are isometries and commute with $\delta_g$. Similarly, the Koszul formula~\eqref{eq:invariant-koszul} expresses the connection, and hence its Ricci tensor, by rational functions of the metric coefficients on the positive cone.

For fixed $Q$, put
\begin{equation}\label{eq:parameter-space}
 E=d(\Lambda^2\gfr^*),\qquad
 \mathcal M_\varepsilon
   =\operatorname{Sym}^2_+(\gfr^*)\times(\varepsilon\rho+E).
\end{equation}
Averaging shows that~\eqref{eq:parameter-space} contains exactly the invariant pairs in the prescribed cohomology class. Its tangent space at $(g,H)$ is $\operatorname{Sym}^2(\gfr^*)\times E$. The metric equation is a symmetric invariant two-tensor and the form equation is an exact invariant three-form. Thus the vector field of~\eqref{eq:grf} is tangent to $\mathcal M_\varepsilon$ and is a smooth finite-dimensional ODE there. The coefficient formulas above also show analyticity in these coordinates. A trajectory can be continued as long as it stays in a compact subset of the positive metric cone and the affine form space.

To relate coefficient convergence to the geometric statement, fix a smooth invariant frame on $G$. Every invariant tensor has constant components in that frame, and its components in coordinate frames are fixed smooth functions times those constants. Compactness of $G$ then shows that convergence of invariant tensor coefficients implies convergence in every $C^k$ norm. Hence coefficient convergence here is smooth convergence on the fixed manifold.

Changing $H$ to $-H$ leaves the BRF equations and $\Sgen$ unchanged, commutes with the flow and interchanges $\nabla^+$ with $\nabla^-$. It therefore suffices in the proofs to treat the positive class. We write $\mathcal M=\mathcal M_1$.

Finally, let $\lfr$ denote the space of right-invariant vector fields with the ordinary vector-field bracket. If $X^R$ is determined by $X\in\gfr$, then $[X^R,Y^R]=-[X,Y]^R$. Thus evaluation at the identity identifies $\lfr$ with the opposite Lie algebra $\gfr^{\mathrm{op}}$. The flow of $X^R$ is left translation by $\exp(tX)$, so these fields are Killing for every left-invariant metric. Although their individual lengths need not be constant for $g$, their $Q$-pairings are constant because $Q$ is bi-invariant. Moreover, $(L_a)_*X^R=(\Ad_aX)^R$, and the adjoint action is $Q$-orthogonal. These facts will make the sum defining the energy independent of the point of $G$.
\section{A proper energy and global convergence}\label{sec:energy}\label{sec:flow}

In this section we prove Theorem~\ref{thm:flow}. The first step is to control both the metric and the three-form by a single function on $\mathcal M$. We construct this function from the right-invariant Killing fields and their coclosed primitives. Its dissipation gives compactness of every trajectory, while the analytic gradient structure gives convergence to a single stationary pair. As in Section~\ref{sec:preliminaries}, we work in the positive class $[H]=[\rho]$.

For $X\in\lfr$, invariance and closedness of $H$ imply $d\iota_XH=\mathcal L_XH=0$. The vanishing of $\HH^2(G)$ therefore provides a primitive. Requiring this primitive to be coclosed removes its ambiguity, because $\HH^1(G)=0$. The equation $d\xi_X=\iota_XH$ is the infinitesimal isometry condition for a generalized metric in the splitting with zero two-form potential. See \cite[Eq.~(37)]{RuTi2020}. The coclosed lift is constructed using Hodge theory, as in \cite[proof of Proposition~4.8]{RuTi2020}. The class $[\rho]$ then determines the symmetric part of its pairing with the translation fields.

\begin{lemma}\label{lem:primitives}
For each $X\in\lfr$ there is a unique one-form $\xi_X$ such that
\begin{equation}\label{eq:primitive}
 d\xi_X=\iota_XH,\qquad \delta_g\xi_X=0.
\end{equation}
The assignment is linear, depends smoothly on $(g,H)$, and is equivariant under left translations. If $L_a(x)=ax$, then $L_a^*\xi_{(L_a)_*X}=\xi_X$. Moreover,
\begin{equation}\label{eq:positive-pairing}
 \xi_X(Y)+\xi_Y(X)=2Q(X,Y)\quad(X,Y\in\lfr).
\end{equation}
\end{lemma}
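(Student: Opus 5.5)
Existence and uniqueness follow from Hodge theory together with \eqref{eq:cohomology}. The closed form $\iota_XH$ is exact because $\HH^2(G)=0$. Write $\iota_XH=d\gamma$. Let $\xi_X=\gamma-d f$, where $f$ solves $\Delta_g f=\delta_g\gamma$; this is solvable because $\delta_g\gamma$ has zero integral. Then $d\xi_X=\iota_XH$ and $\delta_g\xi_X=0$. If two such forms differ by $\eta$, then $d\eta=0=\delta_g\eta$, so $\eta$ is harmonic. It vanishes since $\HH^1(G)=0$. Equivalently, $\xi_X$ is the $L^2$-orthogonal projection of any primitive onto coclosed forms, i.e.\ $\xi_X=d^*G_\Delta(\iota_XH)$ with the Green operator $G_\Delta$ of $g$.

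\emph{Linearity.} This is immediate from uniqueness.

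\emph{Smooth dependence.} I would use the formula $\xi_X=\delta_g\Delta_g^{-1}\iota_XH$ on exact two-forms. The Green operator depends smoothly on $g$, and the source depends linearly on $H$. Note that $\xi_X$ need not be left-invariant, since $X$ is right-invariant. So the argument must be made in the space of smooth forms. Alternatively, expand in left-invariant frame coefficients: $\iota_XH$ has coefficients $\Ad_{x^{-1}}$-linear in $X$, and one can check that $\xi_X$ is $\iota_{\Ad_{x^{-1}}X}$ applied to a fixed linear map $\gfr\to\gfr^*$. This reduces everything to finite-dimensional linear algebra with invertible operators.

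\emph{Equivariance.} $L_a$ is an isometry of $g$ and preserves $H$, and $L_a^*\iota_{(L_a)_*X}H=\iota_XL_a^*H=\iota_XH$. Hence $L_a^*\xi_{(L_a)_*X}$ satisfies \eqref{eq:primitive}, and uniqueness gives $L_a^*\xi_{(L_a)_*X}=\xi_X$.

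\emph{The pairing identity, first for $H=\rho$.} Set $\phi(X,Y)=\xi_X(Y)+\xi_Y(X)$, a function on $G$. I first compute its differential. Since $X,Y$ are Killing for $g$ and $H$ is invariant under their flows, uniqueness gives $\mathcal L_X\xi_Y=\xi_{[X,Y]}$. Cartan's formula then yields
\[
d(\xi_Y(X))=\mathcal L_X\xi_Y-\iota_Xd\xi_Y=\xi_{[X,Y]}-\iota_X\iota_YH .
\]
Adding the symmetric expression, the bracket terms cancel by antisymmetry, and so do the $H$ terms. Thus $\phi(X,Y)$ is constant. It is a symmetric bilinear form on $\lfr$, and it is invariant under the adjoint action by equivariance and the same Lie-derivative computation. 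For $H=\rho$ and $g=Q$ I would compute $\xi_X$ explicitly. Right-invariant fields are $\nabla^+$-parallel. A direct check should show that $\xi_X=X^\flat_Q$: $d X^\flat(Y,Z)=-Q(X,[Y,Z])$ up to sign conventions, which matches $\rho(X,Y,Z)$, and coclosedness holds because $X$ is Killing. This gives $\phi=2Q$.

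\emph{Independence of $(g,H)$.} The key is that $\phi$ depends only on the class. Changing $H$ by $d\beta$ with $\beta$ invariant changes $\iota_XH$ by $\iota_Xd\beta=\mathcal L_X\beta-d\iota_X\beta=-d\iota_X\beta$ up to the coclosed correction. So, modulo exact forms $dh$, $\xi_X$ changes by $-\iota_X\beta$. This contributes $-\beta(X,Y)-\beta(Y,X)=0$ to $\phi$, plus terms $X(h_Y)+Y(h_X)$. Their average over $G$ vanishes, because Killing fields are divergence free. Since $\phi$ is constant, its value equals its average. Changing $g$ affects only the exact correction $dh$, which is handled by the same averaging.

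I expect this last part to be the main obstacle: controlling the exact terms so that their contribution to the constant $\phi$ truly vanishes, and checking the sign conventions in the standard computation $\xi_X=X^\flat_Q$.
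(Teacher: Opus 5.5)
Your proposal is correct. The construction, uniqueness, linearity, smooth dependence and equivariance parts agree with the paper: a Hodge correction of a primitive, $\HH^1(G)=0$ for uniqueness, and naturality under the isometries $L_a$. Your proof of \eqref{eq:positive-pairing}, however, takes a genuinely different route.

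The paper starts from the explicit primitive $\zeta_X=\eta_X-\iota_X\beta$, where $\eta_X=Q(X,\cdot)$ and $H=\rho+d\beta$, whose symmetric pairing is visibly $2Q(X,Y)$. It fixes the correction $\xi_X=\zeta_X+df_X$ by a mean-zero condition, which makes the potentials themselves equivariant. Differentiating $L_a^*f_{(L_a)_*X}=f_X$ then gives $X(f_Y)=f_{[X,Y]}$, so the correction has zero symmetric pairing pointwise.

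You instead differentiate the equivariance of $\xi$ itself, $\mathcal L_X\xi_Y=\xi_{[X,Y]}$, and use Cartan's formula to show that $\xi_X(Y)+\xi_Y(X)$ is constant on $G$ for every pair. Two admissible data then differ, after the skew term $\iota_X\beta$ drops out of the symmetric pairing, by $X(h_Y)+Y(h_X)$. This is a constant with zero Haar mean, because right-invariant fields are divergence free, so it vanishes.

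This already settles the step you flagged as the main obstacle: nothing beyond ``constant with zero mean'' is needed. Your sign check is the one the paper makes. It gets $d\eta_X=\iota_X\rho$ from the constancy of $Q$-pairings of right-invariant fields and the identification $\lfr\simeq\gfr^{\mathrm{op}}$.

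As for what each route buys: yours needs no normalization of the potentials, and it shows as a by-product that the symmetric pairing is a constant form determined by the torsion class alone. The paper's gives the identity pointwise in one step and produces the primitive $\eta_X-\iota_X\beta$ that is reused in Proposition~\ref{prop:properness}. You could also merge your two deformation steps (first $H$, then $g$) by comparing $\xi_X$ directly with $\eta_X-\iota_X\beta$ for the given $(g,H)$.
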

\begin{proof}
\emph{Construction and normalization.} Since $[H]=[\rho]$, averaging a primitive of $H-\rho$ over left translations gives a left-invariant two-form $\beta$ with $H=\rho+d\beta$. The map $d:\Lambda^2\gfr^*\to E$ is surjective. Fixing a linear right inverse allows us to choose $\beta$ linearly in $H-\rho$ throughout the parameter space.

For a right-invariant field $X$, put $\eta_X=Q(X,\cdot)$. The pairings of right-invariant fields under $Q$ are constant. If $Y,Z\in\lfr$, then $d\eta_X(Y,Z)=-Q(X,[Y,Z])$, where the bracket on the right is the vector-field bracket. This is $\rho(X,Y,Z)$ because evaluation identifies $\lfr$ with $\gfr^{\mathrm{op}}$. Thus $d\eta_X=\iota_X\rho$. The form $\beta$ is preserved by the flow of $X$, so Cartan's formula gives
\begin{equation}\label{eq:zeta}
 \zeta_X:=\eta_X-\iota_X\beta,
 \qquad d\zeta_X=\iota_XH.
\end{equation}
This choice has the required exterior derivative, but it need not be coclosed.

Let $d\mu=dV_g/\Vol(g)$ be normalized Haar measure. It is independent of the left-invariant metric $g$. We correct $\zeta_X$ by an exact one-form and impose zero mean on its potential. More precisely, solve
\begin{equation}\label{eq:primitive-hodge-gauge}
 \delta_gdf_X=-\delta_g\zeta_X,\qquad
 \int_G f_X\,d\mu=0,
 \qquad \xi_X=\zeta_X+df_X.
\end{equation}
The adjoint identity gives $\int_G\delta_g\zeta_X\,dV_g=\langle\zeta_X,d1\rangle_{L^2,g}=0$, so the right-hand side has zero mean. The kernel of $\delta_gd$ on functions consists of constants because $G$ is compact and connected. Hence \eqref{eq:primitive-hodge-gauge} has a unique smooth solution. If two one-forms satisfy \eqref{eq:primitive}, their difference is closed and coclosed. It is a harmonic one-form and vanishes by \eqref{eq:cohomology}. This proves uniqueness of $\xi_X$, independently of the auxiliary choice of $\beta$. Both equations in \eqref{eq:primitive-hodge-gauge} are linear in $X$, which proves linearity.

\emph{Equivariance and smooth dependence.} Fix $a\in G$. The field $(L_a)_*X$ is right-invariant, and naturality of contraction gives $L_a^*\iota_{(L_a)_*X}H=\iota_XH$. Since $L_a$ is an isometry of $g$, its pullback commutes with $\delta_g$. Thus $L_a^*\xi_{(L_a)_*X}$ satisfies \eqref{eq:primitive} for $X$ and equals $\xi_X$ by uniqueness. The same argument applied to \eqref{eq:primitive-hodge-gauge}, using invariance of $\beta$, $Q$ and $d\mu$, gives
\begin{equation}\label{eq:primitive-equivariance}
 L_a^*f_{(L_a)_*X}=f_X,
 \qquad L_a^*\xi_{(L_a)_*X}=\xi_X.
\end{equation}
For smooth dependence, the mean-zero functions form a fixed space because $d\mu$ is fixed. On this space $\delta_gd$ is invertible. Between mean-zero Sobolev spaces whose orders differ by two, its inverse depends smoothly on the coefficients of $g$, by elliptic estimates and the inverse-operator identity. The coefficients of $\zeta_X$ depend smoothly on $H$ and linearly on $X$. Equation~\eqref{eq:primitive-hodge-gauge}, followed by elliptic regularity, therefore gives smooth dependence of $\xi_X$ on $(g,H)$ in every differentiability order.

\emph{The pairing identity.} Let $F_t$ be the flow of $X\in\lfr$, so $F_t$ is a left translation. With the ordinary vector-field bracket, $\left.\frac{d}{dt}\right|_{t=0}(F_t)_*Y=-[X,Y]$. Differentiating the first identity in \eqref{eq:primitive-equivariance} therefore gives
\begin{equation}\label{eq:primitive-potential-pairing}
 X(f_Y)=f_{[X,Y]},\qquad
 X(f_Y)+Y(f_X)=0.
\end{equation}
On the other hand, skew-symmetry of $\beta$ gives $\zeta_X(Y)+\zeta_Y(X)=2Q(X,Y)$. The correction terms in \eqref{eq:primitive-hodge-gauge} have zero symmetric pairing by \eqref{eq:primitive-potential-pairing}. Adding them proves \eqref{eq:positive-pairing}.
\end{proof}

Fix a $Q$-orthonormal basis $X_1,\ldots,X_n$ of $\lfr$, where $n=\dim G$, and define
\begin{equation}\label{eq:energy}
 \Egen(g,H)=\sum_{i=1}^n\bigl(|X_i|_g^2+|\xi_{X_i}|_g^2\bigr).
\end{equation}
Linearity in Lemma~\ref{lem:primitives} shows that this sum is independent of the chosen orthonormal basis. Left translations preserve $g$ and the assignment $X\mapsto\xi_X$. Their action on $\lfr$ is $Q$-orthogonal, by bi-invariance of $Q$. Consequently the sum in \eqref{eq:energy} is constant on $G$ and defines a smooth real-valued function on $\mathcal M$. Individual summands need not be spatially constant.

The next calculation explains why the metric and the three-form cannot degenerate while $\Egen$ stays bounded. The positive pairing controls the inverse metric, and its remaining skew part determines the exact part of $H$.

\begin{proposition}\label{prop:properness}
Write $g_e=Q(P\cdot,\cdot)$ with $P$ positive definite and $Q$-self-adjoint. There is a $Q$-skew-adjoint linear map $K$ such that, with $A=\id-K$,
\begin{equation}\label{eq:primitive-matrix}
 (\xi_X)_e=Q(AX_e,\cdot),\qquad H=\rho+d\beta_K,
 \qquad (\beta_K)_e(U,V)=Q(KU,V).
\end{equation}
Here $\beta_K$ is extended left-invariantly, and the adjoints $A^*$ and $K^*$ below are taken with respect to $Q$. In these terms
\begin{equation}\label{eq:proper-energy}
 \Egen=\tr P+\tr\bigl(P^{-1}AA^*\bigr)
 =\tr P+\tr\bigl(P^{-1}(\id+K^*K)\bigr)
 \geq\tr P+\tr P^{-1}.
\end{equation}
Every sublevel of $\Egen$ is compact in $\mathcal M$, the parameter space of pairs defined in \eqref{eq:parameter-space}.
\end{proposition}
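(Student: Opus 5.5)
The plan is to evaluate everything at the identity $e$. The energy $\Egen$ is spatially constant, so it can be computed there. I would first identify $(\xi_X)_e$ through the construction in the proof of Lemma~\ref{lem:primitives}. Take the left-invariant two-form $\beta$ with $H=\rho+d\beta$ and write $\beta_e(U,V)=Q(K_0U,V)$ with $K_0$ $Q$-skew. Since right- and left-invariant fields agree at $e$, we have $(\zeta_X)_e(Y)=Q(X,Y)-\beta_e(X,Y)=Q((\id-K_0)X,Y)$. It remains to compute the correction $(df_X)_e$.

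\emph{The correction term.} The equivariance $L_a^*f_{(L_a)_*X}=f_X$ from \eqref{eq:primitive-equivariance}, together with $(L_a)_*X^R=(\Ad_aX)^R$, gives $f_X(a)=f_{\Ad_{a^{-1}}X}(e)$ (up to the precise placement of the inverse). The map $X\mapsto f_X(e)$ is linear, so it equals $Q(Z,\cdot)$ for a unique $Z\in\gfr$. Differentiating at $e$ in a direction $Y$ yields $(df_X)_e(Y)=\pm Q(Z,[Y,X])=Q(\mp\operatorname{ad}_ZX,Y)$. The sign is consistent with $X(f_Y)=f_{[X,Y]}$ in \eqref{eq:primitive-potential-pairing}. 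Hence $(\xi_X)_e=Q(AX,\cdot)$ with $A=\id-K$ and $K=K_0\pm\operatorname{ad}_Z$, which is $Q$-skew because $\operatorname{ad}_Z$ is. It remains to check that $H=\rho+d\beta_K$. For this, note that $\beta_{\operatorname{ad}_Z}(U,V)=Q([Z,U],V)=\rho(Z,U,V)$. By \eqref{eq:invariant-differential}, this is (up to sign) $d$ of the left-invariant one-form $Q(Z,\cdot)$, so it is exact and $d\beta_K=d\beta_{K_0}$. I expect this identification of $(df_X)_e$, with its signs, to be the main obstacle. Once the correction is seen to be a Cartan-form shift, the rest is linear algebra.

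\emph{Energy formula.} At $e$ we have $|X|_g^2=Q(PX,X)$, and $g_e^{-1}$ corresponds to $P^{-1}$. So $|\xi_X|_g^2=Q(P^{-1}AX,AX)$. Summing over a $Q$-orthonormal basis gives $\tr P+\tr(A^*P^{-1}A)=\tr P+\tr(P^{-1}AA^*)$. Since $K^*=-K$, we get $AA^*=(\id-K)(\id+K)=\id-K^2=\id+K^*K$. The inequality in \eqref{eq:proper-energy} follows from $\tr(P^{-1}K^*K)=\tr(KP^{-1}K^*)\ge0$.

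\emph{Properness.} On a sublevel $\{\Egen\le C\}$ we have $\tr P\le C$ and $\tr P^{-1}\le C$. Thus the eigenvalues of $P$ lie in $[1/C,C]$, which is a compact subset of the positive cone. Moreover $\tr(P^{-1}K^*K)\ge C^{-1}|K|_Q^2$, so $K$ is bounded. Then $H=\rho+d\beta_K$ ranges over the continuous image of a bounded set of $K$ in the affine space $\rho+E$, and hence stays in a bounded set. The sublevel is closed because $\Egen$ is continuous on $\mathcal M$ by the smooth dependence in Lemma~\ref{lem:primitives}. It is therefore a closed and bounded subset of $\mathcal M$ that stays away from the boundary of the positive cone, hence compact.
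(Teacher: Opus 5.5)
Your proof is correct. The energy formula and the compactness argument are essentially the paper's, but you identify $(\xi_X)_e$ and show $H=\rho+d\beta_K$ by a different route.

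The paper never computes the Hodge correction. It reads $A+A^*=2\id$ directly off the pairing identity \eqref{eq:positive-pairing} at $e$ and sets $K=\id-A$. It then notes that $\xi_X$ and $\eta_X-\iota_X\beta_K$ are two left-equivariant families that agree at $e$, hence everywhere. Differentiating gives $\iota_XH=\iota_X(\rho+d\beta_K)$ for all $X\in\lfr$.

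You instead compute the correction explicitly. Equivariance gives $f_X(a)=Q(Z,\Ad_{a^{-1}}X)$ with $f_X(e)=Q(Z,X)$. Hence $(df_X)_e=Q(\operatorname{ad}_ZX,\cdot)$ and $K=K_0-\operatorname{ad}_Z$. Your two-form $\beta_{\operatorname{ad}_Z}$ equals $\iota_{Z^L}\rho$ for the left-invariant field $Z^L$. It is closed because $d\iota_{Z^L}\rho=\mathcal L_{Z^L}\rho=0$ by bi-invariance of $\rho$. Formula \eqref{eq:invariant-differential} is the two-form differential, so it is not the right citation, but this identity is all you need. The sign ambiguities you flag are harmless, since $\pm\operatorname{ad}_Z$ are both $Q$-skew and both give closed two-forms.

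What each route buys:
\begin{itemize}
\item Yours does not use \eqref{eq:positive-pairing}; in effect it re-derives it. It also shows concretely that imposing the coclosed gauge changes the matrix of any chosen potential only by an inner derivation.
\item The paper's is shorter and needs no sign bookkeeping. It relies on the pairing identity already proved in \cref{lem:primitives} and on a uniqueness-by-equivariance argument.
\end{itemize}
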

\begin{proof}
Define $A:\gfr\to\gfr$ by $Q(AX_e,\cdot)=(\xi_X)_e$. At the identity, \eqref{eq:positive-pairing} becomes $Q(AX_e,Y_e)+Q(AY_e,X_e)=2Q(X_e,Y_e)$. Thus $A+A^*=2\id$, so $A=\id-K$ with $K^*=-K$. The corresponding invariant two-form $\beta_K$ is uniquely determined by the last formula in \eqref{eq:primitive-matrix}.

The two families $\xi_X$ and $\eta_X-\iota_X\beta_K$ are equivariant under left translations and agree at the identity for every $X\in\lfr$. Given any $a\in G$, equivariance transports their values at $e$ to their values at $a$, since $(L_a)_*$ is an automorphism of the space of right-invariant fields. Hence the families agree everywhere. Differentiating this equality as in \eqref{eq:zeta} gives $\iota_XH=\iota_X(\rho+d\beta_K)$ for all $X\in\lfr$. These fields span every tangent space, so $H=\rho+d\beta_K$.

Put $e_i=(X_i)_e$. Then $\{e_i\}$ is a $Q$-orthonormal basis of $\gfr$. The squared dual norm of the covector $Q(V,\cdot)$ is $Q(P^{-1}V,V)$. Evaluating \eqref{eq:energy} at the identity therefore gives
\begin{equation}\label{eq:energy-trace-calculation}
 \sum_i|X_i|_g^2=\tr P,
 \qquad
 \sum_i|\xi_{X_i}|_g^2
   =\sum_iQ(P^{-1}Ae_i,Ae_i)
   =\tr(P^{-1}AA^*).
\end{equation}
Since $A=\id-K$ and $K^*=-K$, one has $AA^*=\id+KK^*=\id+K^*K$. Substitution in \eqref{eq:energy-trace-calculation} proves the equalities in \eqref{eq:proper-energy}. The remaining term has nonnegative trace because $P^{-1/2}K^*KP^{-1/2}$ is positive semidefinite. This proves the inequality.

Let $\Egen\leq C$, with $C>0$. If $\lambda$ is an eigenvalue of $P$, \eqref{eq:proper-energy} gives both $\lambda\leq C$ and $\lambda^{-1}\leq C$. Thus
\begin{equation}\label{eq:energy-coefficient-bounds}
 C^{-1}\id\leq P\leq C\id,
 \qquad
 |A|_Q^2=\tr(AA^*)\leq C^2,
 \qquad |K|_Q^2=|A|_Q^2-n\leq C^2-n.
\end{equation}
For the bound on $A$, use $P^{-1}\geq C^{-1}\id$ in the middle expression of \eqref{eq:proper-energy}. The identity for $|K|_Q^2$ follows from $AA^*=\id+K^*K$. If the last upper bound is negative, the sublevel is empty. Otherwise it bounds $K$ in the fixed finite-dimensional space of skew-adjoint endomorphisms. Since $K\mapsto d\beta_K$ is a fixed linear map, \eqref{eq:primitive-matrix} also bounds $H$ in fixed tensor coordinates.

It remains to check that these bounds give a closed set of pairs inside $\mathcal M$. Let $(P_j,H_j)$ be a sequence in the sublevel. After taking a subsequence, the coefficient bounds give limits $P_j\to P_\infty$ and $H_j\to H_\infty$. By \eqref{eq:energy-coefficient-bounds}, $P_\infty$ is positive definite. The affine space $\rho+E$ is closed, so $H_\infty\in\rho+E$. In particular, it is closed and has the prescribed cohomology class. Smooth dependence in Lemma~\ref{lem:primitives} gives $\Egen(P_\infty,H_\infty)\leq C$. Thus every sequence in the sublevel has a subsequence converging to a point of the same sublevel, which proves compactness.
\end{proof}

We next compute the derivative of $\Egen$ along the flow. For a vector field $X$, let $A_X(U,V)=g(\nabla^g_U X,V)$. When $X$ is Killing, $A_X$ is skew-symmetric and $dX^\flat=2A_X$. We write $\Sym$ for the symmetric part of a covariant two-tensor. All norms of covariant tensors in the dissipation formula are full tensor norms. The exterior norm is indicated explicitly whenever it is used in an adjoint identity.

\begin{proposition}\label{prop:dissipation}
Along an invariant solution of \eqref{eq:grf} with $[H_t]=[\rho]$, set $\xi_i=\xi_{X_i}$ and $W_i=\xi_i^\sharp$. Then
\begin{equation}\label{eq:dissipation}
 \frac{d\Egen}{dt}
 =-\frac{2}{\Vol(g_t)}\sum_{i=1}^n\int_G
 \left(\left|A_{X_i}-\tfrac12\iota_{W_i}H\right|_g^2
       +|\Sym\nabla\xi_i|_g^2\right)dV_g\leq0.
\end{equation}
\end{proposition}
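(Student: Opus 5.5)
We compute $\tfrac{d}{dt}\Egen$ by differentiating each summand $|X_i|_g^2+|\xi_i|_g^2$ along the flow. Since $\Egen$ is spatially constant, we may replace each summand by its $d\mu$-average:
\[
\Egen=\frac1{\Vol(g)}\sum_i\bigl(\|X_i\|_{L^2,g}^2+\|\xi_i\|_{L^2,g}^2\bigr).
\]
This lets us use integration by parts freely. Along \eqref{eq:grf} the normalized measure $d\mu$ is fixed, so the volume factor is inert after averaging. The metric term contributes
\[
\frac{d}{dt}|X|_g^2=\dot g(X,X)=-2F(X,X).
\]
For the one-form we have
\[
\frac{d}{dt}|\xi|_g^2=-\dot g(W,W)+2g(\dot\xi,\xi)=2F(W,W)+2g(\dot\xi,\xi).
\]

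\textbf{Step 1: the variation of the primitive.} Differentiating \eqref{eq:primitive} in $t$ gives $d\dot\xi_X=\iota_X\dot H=-\iota_Xd\delta_gH$ together with the differentiated gauge condition. By Cartan's formula and invariance of $\delta_gH$ under the flow of the Killing field $X$,
\[
-\iota_Xd\delta_gH=d\,\iota_X\delta_gH .
\]
Hence $\dot\xi_X+\iota_X\delta_gH$ is closed, and therefore exact by \eqref{eq:cohomology}. Its pairing with the coclosed form $\xi_X$ integrates to zero. We thus only need
\[
\int g(\dot\xi,\xi)=-\int(\delta_gH)(X,W).
\]
This avoids computing the variation of the gauge condition.

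\textbf{Step 2: rewrite everything as squares.} We now have
\[
\frac{d\Egen}{dt}=\frac{2}{\Vol}\sum_i\int\Bigl(-F(X_i,X_i)+F(W_i,W_i)-\delta_gH(X_i,W_i)\Bigr).
\]
For the Killing field $X$ we use the Bochner identity $\int|A_X|^2=\int\Ric(X,X)$, which follows from \eqref{eq:weitzenbock} with $\delta X^\flat=0$ and $\nabla X^\flat=\tfrac12dX^\flat$. For $\xi$ we use \eqref{eq:weitzenbock} together with \eqref{eq:one-form-splitting} and $\delta\xi=0$, which gives
\[
\int\Ric(W,W)=\int\Bigl(\tfrac12|d\xi|_{\ext}^2-|\Sym\nabla\xi|^2-\tfrac12|d\xi|_{\ext}^2\Bigr)+\ldots
\]
More precisely, $\int\Ric(W,W)=-\int|\Sym\nabla\xi|^2+\tfrac12\int|d\xi|_{\ext}^2$. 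Here $|d\xi|_{\ext}^2=|\iota_XH|_{\ext}^2=\tfrac14H_g^2(X,X)$ with the appropriate convention factor. We then expand $|A_X-\tfrac12\iota_WH|^2$. The cross term $-\int g(A_X,\iota_WH)$ should be matched with $\delta H(X,W)$ and the $H^2$ terms. To do this, write
\[
\delta_gH(X,W)=-\sum_k(\nabla_{e_k}H)(e_k,X,W),
\]
integrate by parts to move the derivative onto $X$ and $W$, and use $\nabla X=A_X$ and $\nabla\xi=\Sym\nabla\xi+\tfrac12\iota_XH$. The $\Sym\nabla\xi$ part pairs with the skew $H$ to give zero. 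The term $\tfrac12\iota_XH$ gives the quadratic $H^2(X,X)$ contribution, and the term with $A_X$ gives the cross term.

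\textbf{Main obstacle.} The main difficulty is the bookkeeping of constants: full versus exterior norms, and the factors $\tfrac14$ and $\tfrac12$. The target identity requires the following matches. The terms $-F(X,X)$ and $\tfrac14H^2(X,X)$ must combine with $|A_X|^2$ and $\tfrac14|\iota_WH|^2=\tfrac14H^2(W,W)$. The term $F(W,W)$ must contribute $-\Ric(W,W)$ with the correct sign after Weitzenb\"ock. I would first verify every coefficient on the standard pair $(Q,\rho)$. There $\xi_X=X^\flat$ and $A_X=\tfrac12\iota_X\rho$, the latter because right-invariant fields are $\nabla^+$-parallel. Both squares vanish there, which fixes the signs. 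Once the identity holds pointwise in the integrated sense, nonpositivity in \eqref{eq:dissipation} is immediate.
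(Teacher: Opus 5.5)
Your route is the paper's. You differentiate $|X_i|_g^2+|\xi_i|_g^2$ pointwise, write $\dot\xi$ as $\iota_X\delta_gH$ plus an exact form that drops out against the coclosed $\xi$, apply the Weitzenb\"ock formula to $X^\flat$ and to $\xi$, and complete a square. Your componentwise divergence argument for the mixed term is a valid substitute for the paper's identity $\langle\delta_gH,\alpha\wedge\xi\rangle_{L^2,\ext}=\langle H,d\alpha\wedge\xi-\alpha\wedge d\xi\rangle_{L^2,\ext}$. Carried out, it gives the same formula $\int_G\delta_gH(X,W)\,dV_g=\int_G\bigl(\langle A_X,\iota_WH\rangle_g-\tfrac12H_g^2(X,X)\bigr)dV_g$.

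The gap is a sign error in Step 1 that breaks the final identity. You correctly obtain $d\dot\xi_X=d(\iota_X\delta_gH)$. But this makes $\dot\xi_X-\iota_X\delta_gH$ closed, not $\dot\xi_X+\iota_X\delta_gH$. Hence $\int_G g(\dot\xi,\xi)\,dV_g=+\int_G\delta_gH(X,W)\,dV_g$, and the mixed term in Step 2 should carry a plus sign. With your sign it contributes $-2\langle A_X,\iota_WH\rangle_g+H_g^2(X,X)$. Each field then yields $-2|A_X+\tfrac12\iota_WH|_g^2-2|\Sym\nabla\xi|_g^2+2H_g^2(X,X)$, which is not the target and from which nonpositivity does not follow. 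Your proposed check at $(Q,\rho)$ cannot fix this sign. There $\delta_g\rho=0$, so the whole integrated mixed term vanishes whichever sign precedes it.

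There is also a factor slip: $|d\xi|_{\ext,g}^2=|\iota_XH|_{\ext,g}^2=\tfrac12H_g^2(X,X)$, not $\tfrac14H_g^2(X,X)$. The standard pair would catch this one.

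You also left the final assembly undone, and it is exactly where both slips matter. With the corrections, the three integrated contributions per field are:
\begin{itemize}
\item $-2F(X,X)$ gives $-2|A_X|_g^2+\tfrac12H_g^2(X,X)$;
\item $2F(W,W)$ gives $-2|\Sym\nabla\xi|_g^2+\tfrac12H_g^2(X,X)-\tfrac12H_g^2(W,W)$;
\item the mixed term gives $2\langle A_X,\iota_WH\rangle_g-H_g^2(X,X)$.
\end{itemize}
The three $H_g^2(X,X)$ coefficients $\tfrac12+\tfrac12-1$ cancel. Since $|\iota_WH|_g^2=H_g^2(W,W)$, the remainder is $-2|A_X-\tfrac12\iota_WH|_g^2-2|\Sym\nabla\xi|_g^2$, which is the statement.
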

\begin{proof}
\emph{Variation of the primitive and of its norm.} Fix one of the time-independent fields $X=X_i$. Write $\xi=\xi_X$, $W=\xi^\sharp$, and $\alpha=X^\flat$, all evaluated at the current time. Smooth dependence in Lemma~\ref{lem:primitives} allows us to differentiate $d\xi=\iota_XH$. The two-form $\delta_gH$ is left-invariant, because $g$ and $H$ are left-invariant and $\delta_g$ commutes with their isometries. Hence $\mathcal L_X\delta_gH=0$, and Cartan's formula gives $d\dot\xi=-\iota_Xd\delta_gH=d(\iota_X\delta_gH)$. The difference $\dot\xi-\iota_X\delta_gH$ is closed. Since $\HH^1(G)=0$, there is a function $u_X$ such that
\begin{equation}\label{eq:primitive-variation}
 \dot\xi=\iota_X\delta_gH+du_X.
\end{equation}
No equation for $u_X$ is needed. Its contribution to the integral vanishes, since $\langle du_X,\xi\rangle_{L^2,g}=\langle u_X,\delta_g\xi\rangle_{L^2,g}=0$.

For a metric variation $v=\dot g$, the derivative of the inverse metric is $-g^{-1}vg^{-1}$. Since $X$ is fixed while $\xi$ varies, this gives
\begin{equation}\label{eq:energy-norm-variation}
 \partial_t|X|_g^2=v(X,X),
 \qquad
 \partial_t|\xi|_g^2=-v(W,W)+2\langle\dot\xi,\xi\rangle_g.
\end{equation}
Insert $v=-2\Ric+\tfrac12H_g^2$ and \eqref{eq:primitive-variation} into \eqref{eq:energy-norm-variation}, and integrate at the fixed time under consideration. The result is
\begin{align}\label{eq:derivative-expanded}
 \int_G\partial_t(|X|_g^2+|\xi|_g^2)\,dV_g
 =\int_G\bigl(&-2\Ric(X,X)+\tfrac12H_g^2(X,X)
             +2\Ric(W,W)-\tfrac12H_g^2(W,W)\notag\\
             &+2\langle\iota_X\delta_gH,\xi\rangle_g\bigr)dV_g.
\end{align}
We have integrated a pointwise time derivative here. We have not differentiated an integral with a time-dependent volume form.

\emph{The curvature and mixed terms.} For the Killing field $X$, one has $\delta_g\alpha=0$ and $d\alpha=2A_X$. The norm convention in \eqref{eq:norm-conventions} gives $|d\alpha|_{\ext,g}^2=2|A_X|_g^2$, while $|\nabla\alpha|_g^2=|A_X|_g^2$. Applying \eqref{eq:weitzenbock} to $\alpha$ therefore yields
\begin{equation}\label{eq:killing-integrated}
 \int_G\Ric(X,X)\,dV_g=\int_G|A_X|_g^2\,dV_g.
\end{equation}
For $\xi$, one has $\delta_g\xi=0$ and $d\xi=\iota_XH$. In this case $|d\xi|_{\ext,g}^2=\tfrac12H_g^2(X,X)$, so a second application of \eqref{eq:weitzenbock} gives
\begin{equation}\label{eq:xi-weitzenbock}
 \int_G\Ric(W,W)\,dV_g
 =\int_G\left(\tfrac12H_g^2(X,X)-|\nabla\xi|_g^2\right)dV_g.
\end{equation}

For the mixed term in \eqref{eq:derivative-expanded}, exterior multiplication by $\alpha$ is adjoint to contraction by $X$. Integration by parts therefore gives
\begin{align}\label{eq:flow-cross}
 \int_G\langle\iota_X\delta_gH,\xi\rangle_g\,dV_g
 &=\langle\delta_gH,\alpha\wedge\xi\rangle_{L^2,\ext}\notag\\
 &=\langle H,d\alpha\wedge\xi-\alpha\wedge d\xi\rangle_{L^2,\ext}\notag\\
 &=\langle\iota_WH,d\alpha\rangle_{L^2,\ext}
       -\|\iota_XH\|_{L^2,\ext}^2\notag\\
 &=\int_G\left(\langle\iota_WH,A_X\rangle_g
                    -\tfrac12H_g^2(X,X)\right)dV_g.
\end{align}
In the third line, $d\alpha\wedge\xi=\xi\wedge d\alpha$ because $d\alpha$ has degree two. In the last line, $d\alpha=2A_X$ cancels the factor $1/2$ between the exterior and full inner products of two-forms. This accounts for the coefficient of the mixed product.

The symmetric and skew parts of $\nabla\xi$ are orthogonal. The skew part is $\tfrac12d\xi$, as recalled in \eqref{eq:one-form-splitting}. Hence
\begin{equation}\label{eq:sym-split}
 |\nabla\xi|_g^2=|\Sym\nabla\xi|_g^2+\tfrac14H_g^2(X,X).
\end{equation}
Substituting \eqref{eq:killing-integrated}--\eqref{eq:flow-cross} into \eqref{eq:derivative-expanded} gives
\begin{align}\label{eq:dissipation-before-square}
 \int_G\partial_t(|X|_g^2+|\xi|_g^2)\,dV_g
 =\int_G\bigl(&-2|A_X|_g^2-2|\nabla\xi|_g^2+\tfrac12H_g^2(X,X)\notag\\
 &-\tfrac12H_g^2(W,W)+2\langle\iota_WH,A_X\rangle_g\bigr)dV_g.
\end{align}
The $H_g^2(X,X)$ terms in \eqref{eq:dissipation-before-square} cancel by \eqref{eq:sym-split}. Since $|\iota_WH|_g^2=H_g^2(W,W)$, the remaining three terms involving $A_X$ and $W$ form the square
\begin{equation}\label{eq:dissipation-square}
 -2|A_X|_g^2+2\langle\iota_WH,A_X\rangle_g-\tfrac12|\iota_WH|_g^2
 =-2\left|A_X-\tfrac12\iota_WH\right|_g^2.
\end{equation}
Combining \eqref{eq:sym-split} and \eqref{eq:dissipation-square} gives
\begin{equation}\label{eq:single-field-dissipation}
 \int_G\partial_t(|X|_g^2+|\xi|_g^2)\,dV_g
 =-2\int_G\left(\left|A_X-\tfrac12\iota_WH\right|_g^2
                       +|\Sym\nabla\xi|_g^2\right)dV_g.
\end{equation}

\emph{Summation and the volume factor.} Sum \eqref{eq:single-field-dissipation} over $i$. By \eqref{eq:energy}, the pointwise sum on the left is $\partial_t\Egen(t)$, which is spatially constant. Its integral is therefore $\Vol(g_t)\dot\Egen(t)$. Division by $\Vol(g_t)$ proves \eqref{eq:dissipation}. Equivalently, differentiating $\int_G\Egen\,dV_g=\Egen\Vol(g)$ produces the same volume-variation term on both sides, and those terms cancel. No restriction on the evolution of the volume is used.
\end{proof}

Properness and dissipation now imply that every trajectory stays in a compact part of the parameter space. To identify its limits, we use the monotonicity of generalized scalar curvature. The formula in \cite[Eq.~(44)]{FuLaSt2024} takes the following form with our norm convention.

\begin{lemma}\label{lem:scalar-monotonicity}
Along an invariant solution,
\begin{equation}\label{eq:scalar-monotonicity}
 \frac{d\Sgen}{dt}=2|F|_g^2+|\delta_gH|_{\ext,g}^2\geq0.
\end{equation}
Equality holds precisely at a BRF pair.
\end{lemma}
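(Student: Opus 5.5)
The plan is a direct first-variation computation of $\Sgen$ on a compact manifold, specialised to invariant pairs. Since all quantities are left-invariant, $\Sgen$ is spatially constant, so $\frac{d\Sgen}{dt}$ equals its average $\Vol(g)^{-1}\int_G\partial_t\Sgen\,dV_g$. This lets us integrate by parts freely and discard divergence terms.

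\emph{First variation.} For $v=\dot g$ and $\dot H=h$ we use the standard formulas
\[
 \partial_t\scal(g)=-\Delta\tr_gv+\delta\delta v-\langle\Ric,v\rangle_g,\qquad
 \partial_t\tfrac1{12}|H|_g^2=-\tfrac14\langle H_g^2,v\rangle_g+\tfrac16\langle H,h\rangle_g .
\]
The second formula follows because $|H|_g^2$ involves three inverse metrics, each contributing $-\tfrac1{12}H_g^2(v)$ after symmetrisation. The divergence terms integrate to zero. Since our norms are full tensor norms, $\tfrac16\langle H,h\rangle_g=\langle H,h\rangle_{\ext,g}$. Hence
\[
 \int\partial_t\Sgen\,dV=\int\Bigl(-\langle F,v\rangle_g-\langle H,h\rangle_{\ext,g}\Bigr)dV .
\]

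\emph{Substituting the flow.} Insert $v=-2F$ and $h=-d\delta_gH$. The first term gives $2|F|_g^2$. The second gives $\langle H,d\delta_gH\rangle_{L^2,\ext}=\|\delta_gH\|_{L^2,\ext}^2$ by the adjoint identity, the same one used for stationarity in Section~\ref{sec:preliminaries}. Dividing by $\Vol(g)$, and using that the integrands are constant, yields \eqref{eq:scalar-monotonicity}.

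\emph{Equality case.} Equality holds iff $F=0$ and $\delta_gH=0$. Together with $dH=0$, this is exactly \eqref{eq:brf}.

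The main obstacle is bookkeeping of the normalisation constants between full and exterior norms, in particular checking that the coefficient of $H_g^2$ in the metric variation of $\tfrac1{12}|H|^2$ is exactly $\tfrac14$, so that it combines with $\Ric$ into $F$. I would verify this in an orthonormal frame: varying $g^{ii'}g^{jj'}g^{kk'}H_{ijk}H_{i'j'k'}$ gives $-3v^{ii'}H_{i\cdot\cdot}H_{i'\cdot\cdot}=-3\langle H_g^2,v\rangle$, and multiplying by $\tfrac1{12}$ gives the required $\tfrac14$.
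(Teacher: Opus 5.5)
Your proposal is correct and follows essentially the same route as the paper: the first variation $d\Sgen_{(g,H)}(v,\eta)=-\langle F,v\rangle_g-\langle H,\eta\rangle_{\ext,g}$, with the factor $3$ coming from the three inverse metrics and the divergence terms eliminated by invariance and integration, followed by substitution of the flow and the adjoint identity $\langle H,d\delta_gH\rangle_{\ext,g}=|\delta_gH|_{\ext,g}^2$. The only cosmetic difference is that the paper notes the divergence terms vanish pointwise because they are invariant with zero integral, whereas you integrate the whole identity and divide by $\Vol(g)$; the two are equivalent.
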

\begin{proof}
We first compute the variation at an arbitrary invariant pair. Let $v$ be an invariant symmetric two-tensor and let $\eta$ be an invariant three-form variation. The scalar-curvature variation is $\dot{\scal}=-\langle\Ric,v\rangle_g$ plus divergence terms. Their sum is invariant and has zero integral on the compact group, so it vanishes pointwise. Thus $\dot{\scal}=-\langle\Ric,v\rangle_g$.

In a fixed frame, the full norm of $H$ is the contraction of two copies of $H$ against three copies of the inverse metric. Differentiating each copy of $H$ contributes the same term, and differentiating each inverse metric contributes the same contraction with $v$. Consequently
\begin{equation}\label{eq:torsion-norm-variation}
 \partial_t|H|_g^2=2\langle H,\eta\rangle_g-3\langle H_g^2,v\rangle_g.
\end{equation}
The factor $3$ in \eqref{eq:torsion-norm-variation} and the factor $6$ between the two norms of a three-form give
\begin{equation}\label{eq:scalar-first-variation}
 d\Sgen_{(g,H)}(v,\eta)
 =-\langle F,v\rangle_g-\tfrac16\langle H,\eta\rangle_g
 =-\langle F,v\rangle_g-\langle H,\eta\rangle_{\ext,g}.
\end{equation}
This formula will also identify the gradient of $\Sgen$ below.

Along the flow, substitute $v=-2F$ and $\eta=-d\delta_gH$ into \eqref{eq:scalar-first-variation}. All scalar contractions are invariant, so integration and division by $\Vol(g)$ preserve their pointwise values. The adjoint identity yields
\begin{equation}\label{eq:scalar-form-dissipation}
 \langle H,d\delta_gH\rangle_{\ext,g}
 =\frac{1}{\Vol(g)}\langle H,d\delta_gH\rangle_{L^2,\ext}
 =|\delta_gH|_{\ext,g}^2.
\end{equation}
Equations~\eqref{eq:scalar-first-variation} and \eqref{eq:scalar-form-dissipation} prove \eqref{eq:scalar-monotonicity}. The right-hand side vanishes exactly when $F=0$ and $\delta_gH=0$. Since $H$ is closed throughout the parameter space, these are precisely the BRF equations.
\end{proof}

\begin{proof}[Proof of Theorem~\ref{thm:flow}]
\emph{Global existence and stationary accumulation points.} The equations define a smooth ODE on $\mathcal M$. By Propositions~\ref{prop:properness} and~\ref{prop:dissipation}, the trajectory from $(g_0,H_0)$ stays in the compact sublevel $\{\Egen\leq\Egen(g_0,H_0)\}$. In particular, it stays a positive distance from the boundary of the metric cone, and its tensor coefficients remain bounded. The vector field of the ODE is smooth on a neighborhood of this compact set. The continuation theorem therefore excludes a finite maximal existence time.

By compactness, $\Sgen$ is bounded along the trajectory. Lemma~\ref{lem:scalar-monotonicity} implies that it increases to a finite limit $s_\infty$. Put $D=2|F|_g^2+|\delta_gH|_{\ext,g}^2$. Then $D\geq0$ and $\int_0^\infty D(t)\,dt=s_\infty-\Sgen(g_0,H_0)<\infty$. Both $D$ and the ODE vector field are smooth in the finite-dimensional parameters. Their restrictions to the compact sublevel show that $\dot D$ is bounded, so $D$ is uniformly continuous along the trajectory. It follows that $D(t)\to0$. Indeed, otherwise some $\epsilon>0$ and a sequence of disjoint intervals of fixed positive length would satisfy $D\geq\epsilon/2$ on each interval, contradicting integrability. Every accumulation point is consequently BRF and has scalar value $s_\infty$.

\emph{The analytic gradient structure.} Compactness alone does not imply that there is only one accumulation point. To prove this, we express the flow as a gradient flow on the parameter space. The metric on exact three-form variations must incorporate the operator $d\delta_g$ that occurs in the form equation.

Use $E=d(\Lambda^2\gfr^*)$ and $\mathcal M$ from \eqref{eq:parameter-space}. All tensor inner products in this part of the proof are evaluated at the identity. For invariant forms, their global $L^2$ inner products are multiplied by the common factor $\Vol(g)$. Hence the adjoint identities on invariant forms hold pointwise after canceling this factor, as in \eqref{eq:invariant-adjoint}. The operator $\delta_g$ preserves invariant forms. Thus $d\delta_g$ maps $E$ to itself, and we set $A_g=d\delta_g|_E$.

For $\eta,\zeta\in E$, the adjoint identity gives
\begin{equation}\label{eq:exact-form-operator}
 \langle A_g\eta,\zeta\rangle_{\ext,g}
 =\langle\delta_g\eta,\delta_g\zeta\rangle_{\ext,g},
 \qquad
 \langle A_g\eta,\eta\rangle_{\ext,g}=|\delta_g\eta|_{\ext,g}^2.
\end{equation}
Therefore $A_g$ is self-adjoint and nonnegative. If its quadratic form vanishes, choose an invariant two-form $\gamma$ with $\eta=d\gamma$. Then $|\eta|_{\ext,g}^2=\langle d\gamma,\eta\rangle_{\ext,g}=\langle\gamma,\delta_g\eta\rangle_{\ext,g}=0$. This proves that $A_g$ is positive definite and invertible on $E$.

Define an inner product on $T_{(g,H)}\mathcal M=\operatorname{Sym}^2(\gfr^*)\oplus E$ by
\begin{equation}\label{eq:parameter-metric}
 \mathbf h_{(g,H)}((v,\eta),(w,\zeta))
 =\tfrac12\langle v,w\rangle_g
       +\langle A_g^{-1}\eta,\zeta\rangle_{\ext,g}.
\end{equation}
Positivity of $A_g$ proves that \eqref{eq:parameter-metric} is a Riemannian metric. When $E=0$, the second term is absent. In fixed bases, the Gram matrices of the tensor inner products are analytic functions of the positive metric coefficients. Equation~\eqref{eq:invariant-adjoint} expresses $\delta_g$ using these Gram matrices and the fixed matrix of $d$, so its coefficients are analytic as well. Matrix inversion preserves analyticity on the set of invertible matrices. Thus $A_g^{-1}$ and $\mathbf h$ are analytic on all of $\mathcal M$.

We identify the gradient using \eqref{eq:scalar-first-variation}. Although $H$ need not belong to $E$, the form $d\delta_gH$ does. Given $\eta\in E$, write $\eta=A_g\psi$ with $\psi\in E$. Self-adjointness in \eqref{eq:exact-form-operator} and the adjoint identity give
\begin{align}\label{eq:form-gradient-pairing}
 \langle A_g^{-1}d\delta_gH,\eta\rangle_{\ext,g}
 &=\langle d\delta_gH,\psi\rangle_{\ext,g}\notag\\
 &=\langle\delta_gH,\delta_g\psi\rangle_{\ext,g}
 =\langle H,A_g\psi\rangle_{\ext,g}
 =\langle H,\eta\rangle_{\ext,g}.
\end{align}
Equations~\eqref{eq:parameter-metric} and \eqref{eq:form-gradient-pairing} show that the pairing of $(-2F,-d\delta_gH)$ with $(v,\eta)$ under $\mathbf h$ equals the right-hand side of \eqref{eq:scalar-first-variation}. Hence
\begin{equation}\label{eq:flow-analytic-gradient}
 \operatorname{grad}_{\mathbf h}\Sgen=(-2F,-d\delta_gH),
 \qquad
 |\operatorname{grad}_{\mathbf h}\Sgen|_{\mathbf h}^2
 =2|F|_g^2+|\delta_gH|_{\ext,g}^2.
\end{equation}
The second identity follows either from \eqref{eq:scalar-form-dissipation} or by evaluating the first variation on the gradient. The function $\Sgen$ is analytic, since curvature and the norm of $H$ are analytic in the invariant metric and form coefficients. Thus \eqref{eq:flow-analytic-gradient} realizes the given flow as the ascending gradient flow of an analytic function.

\emph{Convergence of the whole trajectory.} Write $z(t)=(g_t,H_t)$, and choose a BRF accumulation point $p$. We already know that $\Sgen(p)=s_\infty$. The \L ojasiewicz gradient inequality \cite{Lo1963}, in the local form also used in \cite[proof of Lemma~7.6]{FuLaSt2026}, gives a neighborhood of $p$, a constant $C>0$ and $0<\theta\leq\tfrac12$ such that
\begin{equation}\label{eq:lojasiewicz-gradient}
 |\Sgen(z)-\Sgen(p)|^{1-\theta}
 \leq C|\operatorname{grad}_{\mathbf h}\Sgen(z)|_{\mathbf h}.
\end{equation}
The usual coordinate form of this inequality is equivalent to \eqref{eq:lojasiewicz-gradient}, because $\mathbf h$ and a fixed Euclidean inner product are uniformly equivalent on a sufficiently small neighborhood. The inequality does not require $p$ to be an isolated critical point.

Set $q(t)=s_\infty-\Sgen(z(t))\geq0$. If $q$ vanishes at a finite time, monotonicity and the definition of $s_\infty$ imply that $\Sgen$ is constant thereafter. Equation~\eqref{eq:scalar-monotonicity} then makes the trajectory stationary. We may therefore suppose that $q(t)>0$. On every time interval during which $z(t)$ lies in the neighborhood of \eqref{eq:lojasiewicz-gradient}, equation~\eqref{eq:flow-analytic-gradient} gives
\begin{equation}\label{eq:lojasiewicz-length-derivative}
 -\frac{d}{dt}q(t)^\theta
 =\theta q(t)^{\theta-1}|\dot z(t)|_{\mathbf h}^2
 \geq\frac{\theta}{C}|\dot z(t)|_{\mathbf h}.
\end{equation}
Integrating \eqref{eq:lojasiewicz-length-derivative} from $t$ to $T$ yields
\begin{equation}\label{eq:lojasiewicz-length}
 \int_t^T|\dot z(s)|_{\mathbf h}\,ds
 \leq\frac C\theta\bigl(q(t)^\theta-q(T)^\theta\bigr)
 \leq\frac C\theta q(t)^\theta,
\end{equation}
provided the trajectory remains in that neighborhood on $[t,T]$.

We now verify this condition for all sufficiently large times. Choose a coordinate ball $B_{2r}(p)$ whose closure lies in the neighborhood of \eqref{eq:lojasiewicz-gradient} and in $\mathcal M$. There is $m>0$ such that $\mathbf h(v,v)\geq m|v|^2$ on this closure, where $|\cdot|$ is the Euclidean coordinate norm. A curve from $B_r(p)$ to the boundary of $B_{2r}(p)$ has $\mathbf h$-length at least $\sqrt m\,r$ while it remains in $\overline{B_{2r}(p)}$. Since $p$ is an accumulation point and $q(t)\to0$, choose $t_0$ so that $z(t_0)\in B_r(p)$ and $Cq(t_0)^\theta/\theta<\sqrt m\,r$. If a first exit time from $B_{2r}(p)$ existed, \eqref{eq:lojasiewicz-length} would give a strictly smaller length than this lower bound. Thus no exit occurs.

We may now let $T\to\infty$ in \eqref{eq:lojasiewicz-length}. The trajectory has finite remaining length and stays in the compact set $\overline{B_{2r}(p)}$. The lower comparison with the Euclidean norm makes it a Cauchy curve in these coordinates, so it converges. Since a subsequence converges to $p$, its limit is $p$. Finally, convergence of the coefficients of invariant tensors gives smooth convergence on the compact group. The argument used the original tensor pairs on the fixed group throughout. Replacing $H$ by $-H$ proves the assertion in the negative class, without rescaling or pullback by diffeomorphisms.
\end{proof}
\section{Stationary pairs, the sharp maximum and rigidity}\label{sec:involution}\label{sec:maximum}\label{sec:parametrization}
In this section we first describe the Killing fields of a BRF pair on a compact manifold. The argument uses harmonicity of $H$ and the vanishing of the first two cohomology groups. We then return to $G$, where the positive pairing on translation fields produces a global parallel frame. This gives the critical-value comparison and the algebraic data used to describe every stationary pair in the prescribed Cartan class.

Let $M$ be a connected compact oriented manifold without boundary satisfying $\HH^1(M)=\HH^2(M)=0$, and assume $(g,H)$ is BRF. Let $\Kill(M,g)=\{Z:\mathcal L_Zg=0\}$ be its Killing algebra, with the vector-field bracket, and let $\Isom_0(M,g)$ be the identity component of its isometry group. For $\sigma=\pm1$, write $\nabla^\sigma=\nabla^g+\frac\sigma2H^\sharp$. The next three statements concern this general manifold $M$. In their integrated identities, differential forms carry exterior norms, marked with $\ext$, whereas covariant derivatives and Lie derivatives carry full tensor norms.

\begin{lemma}\label{lem:killing-primitives}
For every $Z\in\Kill(M,g)$ there is a unique coclosed one-form $\xi_Z$ with $d\xi_Z=\iota_ZH$. This assignment is linear and $\Isom_0(M,g)$-equivariant.
\end{lemma}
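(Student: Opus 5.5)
The plan has four steps: show that $\iota_ZH$ is closed, produce a primitive from $\HH^2(M)=0$, fix it by Hodge theory using $\HH^1(M)=0$, and deduce linearity and equivariance from uniqueness. Almost all the content lies in the first step, which is where the BRF hypothesis enters.

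\emph{Closedness of the contraction.} By Cartan's formula, $d\iota_ZH=\mathcal L_ZH-\iota_ZdH=\mathcal L_ZH$, since $dH=0$. So the task is to show $\mathcal L_ZH=0$ for every Killing field $Z$. Let $\phi_t$ be the flow of $Z$; each $\phi_t$ lies in $\Isom_0(M,g)$. At a BRF pair $H$ is closed and coclosed, hence it is the unique $g$-harmonic representative of $[H]$. As observed in Section~\ref{sec:preliminaries}, $\phi_t$ is isotopic to the identity, so it preserves $[H]$. It also commutes with the Hodge Laplacian of $g$, so $\phi_t^*H$ is harmonic in the same class. Uniqueness gives $\phi_t^*H=H$, and differentiating at $t=0$ gives $\mathcal L_ZH=0$. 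Hence $\iota_ZH$ is closed. I expect this step to be the main obstacle. It is the only place where the BRF condition (through $\delta_gH=0$) is used, and it must hold for all Killing fields, not merely for translations.

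\emph{Existence and uniqueness.} Since $\HH^2(M)=0$, there is a one-form $\zeta$ with $d\zeta=\iota_ZH$. I will correct it to be coclosed by solving $\delta_g df=-\delta_g\zeta$ for a mean-zero function $f$. This is solvable because $\int_M\delta_g\zeta\,dV_g=\langle\zeta,d1\rangle_{L^2}=0$, and because the kernel of the Laplacian on functions consists of the constants on the connected manifold $M$. Then $\xi_Z=\zeta+df$ satisfies both $d\xi_Z=\iota_ZH$ and $\delta_g\xi_Z=0$. For uniqueness, the difference of two such forms is closed and coclosed, hence harmonic. It therefore vanishes, since harmonic one-forms represent $\HH^1(M)=0$. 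This repeats the Hodge-gauge argument of Lemma~\ref{lem:primitives}, now for the general manifold $M$.

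\emph{Linearity and equivariance.} Both follow from uniqueness. If $\xi_{Z_1}$ and $\xi_{Z_2}$ are the primitives for $Z_1$ and $Z_2$, then $a\xi_{Z_1}+b\xi_{Z_2}$ is coclosed with differential $\iota_{aZ_1+bZ_2}H$, so it equals $\xi_{aZ_1+bZ_2}$. For $\phi\in\Isom_0(M,g)$, the argument of the first step gives $\phi^*H=H$. Naturality of contraction then yields $d\,\phi^*\xi_{\phi_*Z}=\phi^*\iota_{\phi_*Z}H=\iota_Z\phi^*H=\iota_ZH$. Because $\phi$ is an isometry, $\phi^*$ commutes with $\delta_g$, so $\phi^*\xi_{\phi_*Z}$ is coclosed. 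By uniqueness, $\phi^*\xi_{\phi_*Z}=\xi_Z$, which is the asserted equivariance.
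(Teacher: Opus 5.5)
Your proposal is correct and follows the paper's proof essentially step for step. Invariance of the harmonic representative $H$ under $\Isom_0(M,g)$ gives $\mathcal L_ZH=0$, hence $\iota_ZH$ is closed; $\HH^2(M)=0$ with the mean-zero Hodge gauge gives the coclosed primitive; $\HH^1(M)=0$ gives uniqueness; and linearity and equivariance follow from uniqueness, where you make explicit the use of $\phi^*H=H$ that the paper leaves implicit.
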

\begin{proof}
Since $dH=\delta_gH=0$, the form $H$ is the harmonic representative of its cohomology class. Every isometry $a\in\Isom_0(M,g)$ acts trivially on cohomology, and $a^*H$ is again harmonic. Uniqueness of harmonic representatives therefore gives $a^*H=H$. Applying this to the flow of $Z$ yields $\mathcal L_ZH=0$. Cartan's formula now gives $d\iota_ZH=0$.

The vanishing of $\HH^2(M)$ provides a one-form $\theta$ with $d\theta=\iota_ZH$. To make it coclosed, solve $\delta_gdu=-\delta_g\theta$ with $\int_Mu\,dV_g=0$ and set $\xi_Z=\theta+du$. The equation is solvable because $\int_M\delta_g\theta\,dV_g=\langle\theta,d1\rangle_{L^2}=0$, and the kernel of $\delta_gd$ consists of constants. This is the same Hodge-theoretic construction as in \cref{lem:primitives}.

Two such primitives differ by a closed and coclosed one-form. Their difference vanishes because $\HH^1(M)=0$. Taking linear combinations proves linearity. If $a\in\Isom_0(M,g)$, naturality of contraction and of $\delta_g$ shows that $a^*\xi_{a_*Z}$ is a coclosed primitive of $\iota_ZH$. Uniqueness gives $a^*\xi_{a_*Z}=\xi_Z$, which is the required equivariance.
\end{proof}

\begin{proposition}\label{prop:involution}
The map $\Phi Z=\xi_Z^\sharp$ is an $\Isom_0(M,g)$-equivariant linear involution on the space of Killing fields, and $\Phi^2=\id$ and $\iota_{\Phi Z}H=dZ^\flat.$ More explicitly, for $W=\Phi Z$,
\begin{equation}\label{eq:killing-identity}
 \|\iota_WH-dZ^\flat\|_{L^2,\ext}^2
                +\tfrac12\|\mathcal L_Wg\|_{L^2,g}^2=0.
\end{equation}
\end{proposition}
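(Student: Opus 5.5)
The plan is to prove \eqref{eq:killing-identity} by expanding both terms and showing that the sum vanishes. This is the same computation as in \cref{prop:dissipation}, now with the BRF equations $\Ric=\tfrac14H_g^2$ and $\delta_gH=0$ in force. The remaining assertions then follow from the uniqueness in \cref{lem:killing-primitives}.

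\emph{Step 1: expanding the two terms.} Fix $Z\in\Kill(M,g)$ and put $\alpha=Z^\flat$, $\xi=\xi_Z$ and $W=\xi^\sharp$.

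By \eqref{eq:one-form-splitting}, $\mathcal L_Wg=2\Sym\nabla\xi$. Hence $\tfrac12\|\mathcal L_Wg\|^2_{L^2,g}=2\|\Sym\nabla\xi\|^2_{L^2,g}$.

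Apply \eqref{eq:weitzenbock} to $\xi$. Since $\delta_g\xi=0$, $|d\xi|^2_{\ext,g}=\tfrac12H_g^2(Z,Z)$ and $|\nabla\xi|_g^2=|\Sym\nabla\xi|_g^2+\tfrac14H_g^2(Z,Z)$, it gives
\[
\|\Sym\nabla\xi\|^2_{L^2,g}=\int_M\bigl(\tfrac14H_g^2(Z,Z)-\Ric(W,W)\bigr)dV_g
=\tfrac14\int_M\bigl(H_g^2(Z,Z)-H_g^2(W,W)\bigr)dV_g.
\]
The second equality uses the BRF equation $\Ric=\tfrac14H_g^2$.

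Apply \eqref{eq:weitzenbock} to the Killing form $\alpha$, as in \eqref{eq:killing-integrated}. Using BRF again, it gives
\[
\|d\alpha\|^2_{L^2,\ext}=2\int_M|A_Z|_g^2\,dV_g=2\int_M\Ric(Z,Z)\,dV_g=\tfrac12\int_MH_g^2(Z,Z)\,dV_g.
\]

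\emph{Step 2: the cross term.} Run the integration by parts \eqref{eq:flow-cross} with $X$ replaced by $Z$. Its left-hand side is $\langle\delta_gH,\alpha\wedge\xi\rangle_{L^2,\ext}$, which now vanishes because $\delta_gH=0$. This yields
\[
\langle\iota_WH,d\alpha\rangle_{L^2,\ext}=\|\iota_ZH\|^2_{L^2,\ext}=\tfrac12\int_MH_g^2(Z,Z)\,dV_g.
\]
Also $\|\iota_WH\|^2_{L^2,\ext}=\tfrac12\int_MH_g^2(W,W)\,dV_g$.

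\emph{Step 3: summing.} Expand $\|\iota_WH-d\alpha\|^2_{L^2,\ext}$ using Step 2, and add $2\|\Sym\nabla\xi\|^2_{L^2,g}$ from Step 1. The $H_g^2(W,W)$ terms cancel. What remains is $\|d\alpha\|^2_{L^2,\ext}-\tfrac12\int_M H_g^2(Z,Z)\,dV_g$, which is zero by the Killing identity in Step 1. This proves \eqref{eq:killing-identity}.

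The main obstacle is keeping track of the exterior versus full-norm factors, namely the $\tfrac12$ and the $2$ coming from \eqref{eq:norm-conventions}. These must be tracked exactly as in \cref{prop:dissipation}, since the cancellation depends on them.

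\emph{Step 4: consequences.} Both terms in \eqref{eq:killing-identity} are nonnegative, so each vanishes. The vanishing of the second term means $\mathcal L_Wg=0$, so $W=\Phi Z$ is Killing and $\Phi$ maps $\Kill(M,g)$ to itself. The vanishing of the first term gives $\iota_{\Phi Z}H=dZ^\flat$.

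Consequently $Z^\flat$ is a primitive of $\iota_{\Phi Z}H$. It is coclosed because $Z$ is Killing. The uniqueness in \cref{lem:killing-primitives} then gives $\xi_{\Phi Z}=Z^\flat$, that is, $\Phi^2Z=Z$.

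Linearity of $\Phi$ and its $\Isom_0(M,g)$-equivariance follow from \cref{lem:killing-primitives}, because isometries commute with $\sharp$.
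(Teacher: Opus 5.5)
Your proof is correct and follows essentially the paper's argument. Both use the Weitzenb\"ock formula for $Z^\flat$ and for $\xi_Z$ under $\Ric=\tfrac14H_g^2$, the vanishing cross term $\langle\delta_gH,Z^\flat\wedge\xi_Z\rangle_{L^2,\ext}=0$, and uniqueness of coclosed primitives to get $\Phi^2=\id$; the only difference is bookkeeping, since the paper packages the two Weitzenb\"ock applications into the single identity $\|d\xi\|_{L^2,\ext}^2-\tfrac12\|\mathcal L_{\xi^\sharp}g\|_{L^2,g}^2=\|\iota_{\xi^\sharp}H\|_{L^2,\ext}^2$ for coclosed $\xi$, whereas you express everything through $H_g^2(Z,Z)$ and $H_g^2(W,W)$.
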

\begin{proof}
We first apply the Bochner identity to an arbitrary coclosed one-form $\xi=W^\flat$. By \eqref{eq:one-form-splitting}, its covariant derivative satisfies $|\nabla\xi|_g^2=\frac12|d\xi|_{\ext,g}^2+\frac14|\mathcal L_Wg|_g^2$. Substituting this into \eqref{eq:weitzenbock} and moving the first term to the left gives
\begin{equation}\label{eq:BRF-bochner}
 \|d\xi\|_{L^2,\ext}^2-\tfrac12\|\mathcal L_Wg\|_{L^2,g}^2
 =2\int_M\Ric(W,W)\,dV_g=\|\iota_WH\|_{L^2,\ext}^2.
\end{equation}
For the last equality, the BRF equation gives $2\Ric(W,W)=\frac12H_g^2(W,W)=|\iota_WH|_{\ext,g}^2$. This accounts for the factor arising from the exterior norm of a two-form.

Now set $\alpha=Z^\flat$, $\xi=\xi_Z$ and $W=\xi_Z^\sharp$. Since $Z$ is Killing, $\delta_g\alpha=0$ and $\mathcal L_Zg=0$. Applying \eqref{eq:BRF-bochner} to $\alpha$, and then using $d\xi=\iota_ZH$, gives
\begin{equation}\label{eq:killing-equal-norms}
 \|d\alpha\|_{L^2,\ext}^2
 =\|\iota_ZH\|_{L^2,\ext}^2
 =\|d\xi\|_{L^2,\ext}^2.
\end{equation}
We also need the mixed term between $d\alpha$ and $\iota_WH$. The adjoint of exterior multiplication by a one-form is contraction by its metric dual. Since $d(\alpha\wedge\xi)=d\alpha\wedge\xi-\alpha\wedge d\xi$ and $\delta_gH=0$, integration by parts gives
\begin{equation}\label{eq:killing-cross-term}
 \begin{aligned}
 0&=\langle\delta_gH,\alpha\wedge\xi\rangle_{L^2,\ext}
   =\langle H,d(\alpha\wedge\xi)\rangle_{L^2,\ext}\\
  &=\langle\iota_WH,d\alpha\rangle_{L^2,\ext}
    -\langle\iota_ZH,d\xi\rangle_{L^2,\ext}.
 \end{aligned}
\end{equation}
Thus the mixed inner product in \eqref{eq:killing-cross-term} equals $\|d\xi\|_{L^2,\ext}^2$. Combining this with \eqref{eq:killing-equal-norms}, we obtain
\begin{equation}\label{eq:killing-square-cancellation}
 \begin{aligned}
 &\|\iota_WH-d\alpha\|_{L^2,\ext}^2
       +\tfrac12\|\mathcal L_Wg\|_{L^2,g}^2\\
 &\qquad=\|\iota_WH\|_{L^2,\ext}^2
          -\|d\xi\|_{L^2,\ext}^2
          +\tfrac12\|\mathcal L_Wg\|_{L^2,g}^2=0.
 \end{aligned}
\end{equation}
The last equality is \eqref{eq:BRF-bochner} applied to $\xi$. Hence \eqref{eq:killing-square-cancellation} proves \eqref{eq:killing-identity}.

Both terms in \eqref{eq:killing-identity} are nonnegative. Their vanishing implies pointwise that $W$ is Killing and $\iota_WH=d\alpha$. Moreover, $\alpha$ is coclosed. By \cref{lem:killing-primitives}, it is the canonical primitive associated with $W$, so $\xi_W=\alpha$. Taking metric duals gives $\Phi W=Z$. This proves both that $\Phi$ maps Killing fields to Killing fields and that $\Phi^2=\id$. Linearity and equivariance follow from \cref{lem:killing-primitives} and metric duality.
\end{proof}

\begin{proposition}\label{prop:parallel-fields}
The eigenspaces $\mathfrak p_\sigma=\ker(\Phi-\sigma\id)$ are commuting ideals, and
\begin{equation}\label{eq:killing-decomposition}
 \Kill(M,g)=\mathfrak p_+\oplus\mathfrak p_-,\qquad
 \mathfrak p_\sigma=\{V:\nabla^\sigma V=0\},\qquad
 \dim\mathfrak p_\sigma\leq\dim M.
\end{equation}
The projections $\pi_\pm=\tfrac12(\id\pm\Phi)$ are Lie algebra homomorphisms. Evaluation at any point is injective on each $\mathfrak p_\sigma$.
\end{proposition}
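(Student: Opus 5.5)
The plan is to read everything off \cref{prop:involution}. Since $\Phi$ is a linear involution on $\Kill(M,g)$, the space splits as $\mathfrak p_+\oplus\mathfrak p_-$ with projections $\pi_\pm=\tfrac12(\id\pm\Phi)$; this much is linear algebra. The substantive steps are, in order: identifying $\mathfrak p_\sigma$ with the $\nabla^\sigma$-parallel fields, deducing injectivity of evaluation and the dimension bound, and proving that the eigenspaces are commuting ideals, which makes $\pi_\pm$ homomorphisms.

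\emph{Parallel fields.} Let $V\in\mathfrak p_\sigma$, so $\Phi V=\sigma V$. The identity $\iota_{\Phi V}H=dV^\flat$ gives $\sigma\,\iota_VH=dV^\flat=2A_V$, where $A_V(U,Y)=g(\nabla_UV,Y)$ is skew because $V$ is Killing. I expect
\[
g(\nabla^\sigma_UV,Y)=A_V(U,Y)+\tfrac\sigma2H(U,V,Y)=A_V(U,Y)-\tfrac\sigma2(\iota_VH)(U,Y),
\]
which vanishes. The only delicate point here is tracking the order of arguments in $T(U,V)$ to fix the sign.

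For the converse, if $\nabla^\sigma V=0$ then $\nabla V=-\tfrac\sigma2T(\cdot,V)$ is skew, so $V$ is Killing. The same computation gives $dV^\flat=\sigma\,\iota_VH$. Hence $\sigma V^\flat$ is a coclosed primitive of $\iota_VH$, and uniqueness in \cref{lem:killing-primitives} gives $\xi_V=\sigma V^\flat$, that is, $\Phi V=\sigma V$. Injectivity of evaluation then follows because a $\nabla^\sigma$-parallel field vanishing at one point vanishes on the connected manifold $M$. This also gives $\dim\mathfrak p_\sigma\le\dim M$.

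\emph{Commuting ideals.} This is the main step. For $V\in\mathfrak p_+$ and $U\in\mathfrak p_-$, use torsion-freeness of the Levi-Civita connection:
\[
[V,U]=\nabla_VU-\nabla_UV=-\tfrac12T(V,U)+\tfrac12T(U,V)...
\]
More carefully, $\nabla_VU=\tfrac12T(V,U)$ since $\nabla^-U=0$, and $\nabla_UV=-\tfrac12T(U,V)=\tfrac12T(V,U)$ since $\nabla^+V=0$. The two terms cancel, so $[V,U]=0$.

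To see that each $\mathfrak p_\sigma$ is a subalgebra, I would avoid curvature. By equivariance of $\Phi$ under $\Isom_0(M,g)$, differentiating along the flow of a Killing field $Z$ gives $\Phi[Z,Y]=[Z,\Phi Y]$, so $\Phi$ commutes with every $\operatorname{ad}_Z$. Hence $\operatorname{ad}_Z$ preserves both eigenspaces of $\Phi$, and $\mathfrak p_\pm$ are ideals. Combined with $[\mathfrak p_+,\mathfrak p_-]=0$, this shows that $\pi_\pm$ are Lie algebra homomorphisms.

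I expect the equivariance-differentiation step to be the main obstacle, specifically justifying that $\Phi$ commutes with $\operatorname{ad}$. This needs care with the sign convention $\tfrac{d}{dt}(F_t)_*Y=-[Z,Y]$, although the sign drops out on both sides. It also needs the observation that $\mathcal L_Z$ commutes with $\sharp$ because $Z$ is Killing.
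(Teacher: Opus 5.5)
Your proposal is correct and follows essentially the same route as the paper. You get the ideal property by differentiating the $\Isom_0(M,g)$-equivariance of $\Phi$, identify the eigenspaces via $\nabla^\sigma V=0\iff dV^\flat=\sigma\,\iota_VH$ with uniqueness of coclosed primitives for the converse, and use parallel transport for injectivity of evaluation. Your torsion-free computation of $[\mathfrak p_+,\mathfrak p_-]=0$ is valid but redundant: once both eigenspaces are ideals in a direct sum, $[\mathfrak p_+,\mathfrak p_-]\subset\mathfrak p_+\cap\mathfrak p_-=0$, which is how the paper concludes.
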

\begin{proof}
Differentiating equivariance along the flow of a Killing field $Z$ gives $\Phi[Z,V]=[Z,\Phi V]$. If $\Phi V=\sigma V$, then $\Phi[Z,V]=\sigma[Z,V]$, so each eigenspace is an ideal. Since $\Phi^2=\id$, these eigenspaces give a direct-sum decomposition. The bracket $[\mathfrak p_+,\mathfrak p_-]$ lies in both ideals and therefore vanishes. Consequently, for Killing fields $A,B$, one has $\pi_\sigma[A,B]=[\pi_\sigma A,\pi_\sigma B]$.

It remains to identify the eigenspaces with parallel fields, keeping track of the sign of the torsion. If $\Phi V=\sigma V$, then $\xi_V=\sigma V^\flat$ and $dV^\flat=\sigma\iota_VH$. Since $V$ is Killing, its covariant derivative is skew-symmetric. For arbitrary tangent fields $U,W$, we therefore have
\begin{equation}\label{eq:eigenfield-parallel-sign}
 \begin{aligned}
 2g(\nabla^g_UV,W)
 &=dV^\flat(U,W)=\sigma H(V,U,W)\\
 &=-\sigma H(U,V,W),
 \qquad g(\nabla^\sigma_UV,W)=0.
 \end{aligned}
\end{equation}
Conversely, if $\nabla^\sigma V=0$, then $2g(\nabla^g_UV,W)=-\sigma H(U,V,W)$ directly from the definition of the connection. The right-hand side is skew-symmetric in $U,W$, so $V$ is Killing. Its dual is coclosed, and the exterior-derivative relation in \eqref{eq:eigenfield-parallel-sign} gives $d(\sigma V^\flat)=\iota_VH$. Uniqueness in \cref{lem:killing-primitives} implies $\xi_V=\sigma V^\flat$, so $V\in\mathfrak p_\sigma$.

For any connection, a parallel field is determined by its value at one point on a connected manifold, by parallel transport along curves. Hence evaluation is injective on $\mathfrak p_\sigma$, and its dimension is at most $\dim M$. This completes \eqref{eq:killing-decomposition}.
\end{proof}

We now return to the nontrivial compact connected semisimple group $G$ and the positive class $[\rho]$. The cohomology vanishings in \eqref{eq:cohomology} allow us to apply the preceding statements. We also return to the full tensor convention for $|H|_g^2$. The right-invariant translation fields form the $n$-dimensional Killing subalgebra $\lfr$, where $n=\dim G$. Evaluation at the identity identifies its bracket with the opposite of the bracket on $\gfr$. On a general manifold the parallel spaces in \eqref{eq:killing-decomposition} may have dimension less than $n$. Here the positive pairing from \eqref{eq:positive-pairing} forces $\mathfrak p_+$ to have dimension $n$ and makes it possible to compare its constant metric with $Q$.

\begin{proposition}\label{prop:critical-bound}
If a left-invariant pair $(g,H)$ in $[\rho]$ is BRF, then there is a positive $\Ad$-invariant inner product $R$ on $\lfr\simeq\gfr^{\mathrm{op}}$ with $R\geq Q$ and
\begin{equation}\label{eq:critical-value}
 \Sgen(g,H)=\frac16\tr_R(-B_{\lfr})
                  \leq\frac16\tr_Q(-B_{\lfr})=\frac{c_Q}{6}.
\end{equation}
Equality holds precisely at $(Q,\rho)$. Both connections $\nabla^\pm$ of every such BRF pair are flat.
\end{proposition}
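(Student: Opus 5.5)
The plan is to apply \cref{prop:involution} and \cref{prop:parallel-fields} to the translation algebra $\lfr\subset\Kill(G,g)$, which is legitimate because of \eqref{eq:cohomology}. The goal is to show that $\pi_+$ maps $\lfr$ isomorphically onto $\mathfrak p_+$. For $X\in\lfr$, \cref{lem:primitives} and \cref{lem:killing-primitives} give the same primitive, since both are the unique coclosed primitive. Hence $g(\Phi X,Y)=\xi_X(Y)$, and \eqref{eq:positive-pairing} with $Y=X$ reads $g(X,\Phi X)=Q(X,X)$ pointwise. Expanding $\pi_\pm X=\tfrac12(X\pm\Phi X)$ gives the pointwise identity
\begin{equation*}
 |\pi_+X|_g^2-|\pi_-X|_g^2=g(X,\Phi X)=Q(X,X).
\end{equation*}
Thus $\pi_+$ is injective on $\lfr$. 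Since $\dim\mathfrak p_+\le n$ by \eqref{eq:killing-decomposition}, $\pi_+:\lfr\to\mathfrak p_+$ is a linear isomorphism, and by \cref{prop:parallel-fields} it is a Lie algebra isomorphism.

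Next I define $R(X,Y)=g(\pi_+X,\pi_+Y)$ on $\lfr$. It is constant on $G$ because $\nabla^+$ is metric and the $\pi_+X$ are $\nabla^+$-parallel. It is $\Ad$-invariant because each $\pi_+Z$ is Killing: $\mathcal L_{\pi_+Z}g=0$ evaluated on parallel fields gives $g([V,A],B)+g(A,[V,B])=0$ on $\mathfrak p_+$, and this transfers to $\lfr$ through the homomorphism $\pi_+$. The displayed identity gives $R=Q+g(\pi_-\cdot,\pi_-\cdot)\ge Q$, pointwise and hence as constant forms.

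Evaluation injectivity shows that a basis $V_i$ of $\mathfrak p_+$ is a global $\nabla^+$-parallel frame, so $\nabla^+$ is flat. In this frame the torsion of $\nabla^+$ is $H^\sharp(V_i,V_j)=-[V_i,V_j]$. Hence $H(V_i,V_j,V_k)=-R([V_i,V_j],V_k)$ with constant coefficients in the structure constants of $\mathfrak p_+\cong\lfr$. Using $\Sgen=\tfrac16|H|_g^2$ at BRF pairs, evaluated in an $R$-orthonormal parallel frame, this gives $\Sgen=\tfrac16\sum|[V_i,V_j]|_R^2=\tfrac16\tr_R(-B_{\lfr})$, by the same computation as $c_Q=|\rho|_Q^2$.

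For flatness of $\nabla^-=\nabla^+-H^\sharp$, I would observe that in the frame $V_i$ one has $\nabla^-_{V_i}V_j=[V_i,V_j]$, with $R$ bi-invariant for the bracket of $\mathfrak p_+$. The curvature then equals the Jacobi expression $[X,[Y,Z]]-[Y,[X,Z]]-[[X,Y],Z]=0$, exactly as in the computation of the standard pair in Section~\ref{sec:preliminaries}. I expect this transfer step to be the main point requiring care: the frame $V_i$ is not left-invariant, so the Jacobi computation must be carried out in the parallel frame rather than in the group's own left-invariant frame. Sign bookkeeping for $\sigma$ is also needed here.

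Finally, $\tr_R(-B_{\lfr})=\tr(R^{-1}(-B_{\lfr}))$, with $-B_{\lfr}$ positive definite and $R^{-1}\le Q^{-1}$. This gives the inequality in \eqref{eq:critical-value}, and equality holds iff $R=Q$. In that case $\pi_-X=0$, so $\lfr\subset\mathfrak p_+$ and $\pi_+=\id$ on $\lfr$. Then $g(X,Y)=Q(X,Y)$ for right-invariant fields, which span every tangent space, so $g=Q$. The torsion formula gives $H(X,Y,Z)=-Q([X,Y]_{\mathrm{vf}},Z)=\rho(X,Y,Z)$, so $H=\rho$. Conversely, $(Q,\rho)$ attains $c_Q/6$, as recorded in the introduction.
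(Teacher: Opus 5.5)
Your proposal is correct and follows essentially the same route as the paper. Both arguments use injectivity of $\pi_+$ on $\lfr$ from the positive pairing, the transported inner product $R=Q+g(\pi_-\cdot,\pi_-\cdot)\geq Q$, the identity $H^\sharp(U,V)=-[U,V]$ in the global $\nabla^+$-parallel frame, the trace comparison, and the Jacobi identity for $\nabla^-$ on $\mathfrak p_+$. The only difference is minor: in the equality case you recover $H=\rho$ from the bracket formula on $\lfr=\mathfrak p_+$, whereas the paper uses $\xi_X=X^\flat=\eta_X$ and $\iota_XH=d\eta_X=\iota_X\rho$; both work.
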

\begin{proof}
\emph{The parallel frame and its metric.} By uniqueness, the coclosed primitives for $X\in\lfr$ agree with those constructed for all Killing fields in \cref{lem:killing-primitives}. In particular, their positive pairing remains valid. Using $\Phi=\pi_+-\pi_-$ and $X=\pi_+X+\pi_-X$, we can rewrite \eqref{eq:positive-pairing} as
\begin{equation}\label{eq:projection-pairing}
 Q(X,Y)=g(\pi_+X,\pi_+Y)-g(\pi_-X,\pi_-Y).
\end{equation}
Indeed, $2Q(X,Y)=g(\Phi X,Y)+g(X,\Phi Y)$, and the two mixed terms cancel in this sum. No orthogonality between $\mathfrak p_+$ and $\mathfrak p_-$ is needed. The two pairings on the right of \eqref{eq:projection-pairing} are constant because each connection $\nabla^\pm$ is metric and its corresponding fields are parallel.

If $\pi_+X=0$, equation~\eqref{eq:projection-pairing} gives $Q(X,X)=-|\pi_-X|_g^2\leq0$. Since $Q$ is positive definite, $X=0$. Thus $\pi_+|_{\lfr}$ is injective. Combining $\dim\lfr=n$ with \cref{prop:parallel-fields}, we obtain $\dim\mathfrak p_+=n$, and $\pi_+|_{\lfr}$ is a Lie algebra isomorphism. Evaluation on $\mathfrak p_+$ is injective at every point, hence is an isomorphism at every point. A basis of $\mathfrak p_+$ therefore gives a global $\nabla^+$-parallel frame.

Define $R(X,Y)=g(\pi_+X,\pi_+Y)$ by transporting this constant metric to $\lfr$. This inner product is positive definite, and \eqref{eq:projection-pairing} gives $R-Q=g(\pi_-\cdot,\pi_-\cdot)\geq0$. The definition uses the projection before evaluation, so $R$ need not equal the original inner product $g_e$ on translation fields.

For $U,V,W\in\mathfrak p_+$, the function $g(V,W)$ is constant and $U$ is Killing. Evaluating the vanishing Lie derivative of the metric gives
\begin{equation}\label{eq:parallel-metric-invariance}
 g([U,V],W)+g(V,[U,W])=0.
\end{equation}
Transporting \eqref{eq:parallel-metric-invariance} by the Lie algebra isomorphism $\pi_+|_{\lfr}$ proves that $R$ is ad-invariant. Since $G$ is connected, it is also $\Ad$-invariant under the identification with $\gfr^{\mathrm{op}}$.

\emph{The critical value and equality case.} The torsion of $\nabla^+$ is $H^\sharp$. Since $U,V\in\mathfrak p_+$ are parallel, its definition gives
\begin{equation}\label{eq:parallel-bracket}
 H^\sharp(U,V)=-[U,V].
\end{equation}
Choose an $R$-orthonormal basis $u_1,\ldots,u_n$ of $\lfr$ and put $U_i=\pi_+u_i$. The fields $U_i$ form a global $g$-orthonormal frame. Equation~\eqref{eq:parallel-bracket} and ad-invariance of $R$ give
\begin{equation}\label{eq:parallel-torsion-norm}
 \begin{aligned}
 |H|_g^2
 &=\sum_{i,j,k}g([U_i,U_j],U_k)^2
   =\sum_{i,j}|[u_i,u_j]|_R^2\\
 &=\sum_i\tr(\operatorname{ad}_{u_i}^*\operatorname{ad}_{u_i})
   =-\sum_i\tr(\operatorname{ad}_{u_i}^2)
   =\tr_R(-B_{\lfr}).
 \end{aligned}
\end{equation}
The adjoint in this formula is taken with respect to $R$, and $\operatorname{ad}_{u_i}^*=-\operatorname{ad}_{u_i}$. Taking the trace of the BRF equation gives $\scal(g)=|H|_g^2/4$. Hence \eqref{eq:parallel-torsion-norm} proves the first equality in \eqref{eq:critical-value}. Reversing a Lie bracket changes every adjoint operator by a minus sign, so it leaves the Killing form unchanged. In particular, $B_{\lfr}=B_{\gfr}$ under evaluation at the identity.

To compare the two traces, let $A=Q^{-1}R\geq\id$ and $C=Q^{-1}(-B_{\lfr})>0$. Thus $\tr_R(-B_{\lfr})=\tr(A^{-1}C)$. We obtain
\begin{equation}\label{eq:critical-trace-comparison}
 \begin{aligned}
 \tr_Q(-B_{\lfr})-\tr_R(-B_{\lfr})
 &=\tr\bigl((\id-A^{-1})C\bigr)\\
 &=\tr\bigl(C^{1/2}(\id-A^{-1})C^{1/2}\bigr)\geq0.
 \end{aligned}
\end{equation}
The operator inside the last trace is nonnegative. Since $C$ is positive definite, equality in \eqref{eq:critical-trace-comparison} forces $\id-A^{-1}=0$, and therefore $R=Q$. This proves the inequality in \eqref{eq:critical-value} and identifies when equality can occur.

If $R=Q$, equation~\eqref{eq:projection-pairing} gives $\pi_-X=0$ for every $X\in\lfr$. Hence $X=\pi_+X$ and $g(X,Y)=Q(X,Y)$ on translation fields. These fields span every tangent space, so the original metric is $g=Q$. Also $\Phi X=X$, and therefore $\xi_X=X^\flat=\eta_X$. The identity $\iota_XH=d\eta_X=\iota_X\rho$ now implies $H=\rho$. Conversely, the standard pair has $R=Q$ and attains equality.

\emph{Flatness of both connections.} The global parallel frame makes $\nabla^+$ flat. For $U,V\in\mathfrak p_+$, equation~\eqref{eq:parallel-bracket} also gives $\nabla^-_UV=\nabla^+_UV-H^\sharp(U,V)=[U,V]$. Since $\mathfrak p_+$ is a Lie algebra of vector fields, its brackets have constant coefficients in a fixed basis. Thus for $U,V,W\in\mathfrak p_+$ the curvature of $\nabla^-$ is
\begin{equation}\label{eq:opposite-connection-flatness}
 \begin{aligned}
 \mathcal R^-(U,V)W
 &=\nabla^-_U\nabla^-_VW-\nabla^-_V\nabla^-_UW-\nabla^-_{[U,V]}W\\
 &=[U,[V,W]]-[V,[U,W]]-[[U,V],W]=0.
 \end{aligned}
\end{equation}
The last equality is the Jacobi identity. Since the fields in $\mathfrak p_+$ span each tangent space, \eqref{eq:opposite-connection-flatness} proves that $\nabla^-$ is flat on $G$. This argument does not require a global parallel frame for $\nabla^-$ on a possibly non-simply-connected group.
\end{proof}

\begin{proof}[Proof of \cref{thm:maximum}]
Replacing $H$ by $-H$ leaves the metric equation, the BRF condition and $\Sgen$ unchanged, and commutes with the form equation. It is therefore enough to consider $[H]=[\rho]$.

Start \eqref{eq:grf} at $(g,H)$. The compactness argument in Section~\ref{sec:flow} gives times $t_j\to\infty$ for which $(g_{t_j},H_{t_j})$ converges to a BRF pair $(g_\infty,H_\infty)$ in the same class. Monotonicity and \cref{prop:critical-bound} imply, for every fixed $t\geq0$,
\begin{equation}\label{eq:maximum-from-critical-bound}
 \begin{aligned}
 \Sgen(g,H)&\leq\Sgen(g_t,H_t)
 \leq\lim_{j\to\infty}\Sgen(g_{t_j},H_{t_j})\\
 &=\Sgen(g_\infty,H_\infty)\leq\frac{c_Q}{6}.
 \end{aligned}
\end{equation}
Here the second inequality follows by taking $j$ large enough that $t_j\geq t$. This proves the maximum bound for every pair in the class, including pairs that are not stationary.

If the initial value equals $c_Q/6$, all inequalities in \eqref{eq:maximum-from-critical-bound} are equalities, so $\Sgen(g_t,H_t)$ is constant. At $t=0$, equation~\eqref{eq:scalar-monotonicity} gives $F=0$ and $\delta_gH=0$. Since $H$ is closed, the initial pair is BRF. The equality case in \cref{prop:critical-bound} gives $(g,H)=(Q,\rho)$. The standard pair attains the bound, completing the proof in both sign classes.
\end{proof}

\begin{remark}\label{rem:scaling}
Under the hypotheses of \cref{thm:maximum}, if $r=\scal(g)>0$, then
\begin{equation}\label{eq:scalar-torsion-bound}
 |H|_g^2\geq\frac{64}{c_Q^2}r^3,
\end{equation}
with equality exactly when $g=aQ$ for some $a>0$ and $H=\varepsilon\rho$. To see this, put $h=|H|_g>0$ and vary the metric scale while keeping $H$ fixed, so its cohomology class is unchanged. The maximum over this one-dimensional family is attained at $s_*=h/(2\sqrt r)$ and satisfies
\begin{equation}\label{eq:optimal-scalar-scale}
 \max_{s>0}\Sgen(sg,H)
 =\max_{s>0}\left(\frac r s-\frac{h^2}{12s^3}\right)
 =\frac{4r^{3/2}}{3h}\leq\frac{c_Q}{6}.
\end{equation}
The last inequality is \cref{thm:maximum}. Rearranging \eqref{eq:optimal-scalar-scale} gives \eqref{eq:scalar-torsion-bound}. Equality holds precisely when $(s_*g,H)=(Q,\varepsilon\rho)$, which gives the stated equality case. Conversely, every such scaled standard pair has equality.
\end{remark}

We now use the second projection to determine all stationary pairs. The projection onto the positive eigenspace determines the inner product $R$ used in the bound. The other projection records how the original translations differ from the standard action. Comparing their values at the identity produces a Lie algebra endomorphism $N$ with $R-N^*R=Q$, where $(N^*R)(X,Y)=R(NX,NY)$. The identity makes this endomorphism strictly contractive, so the following lemma applies.

\begin{lemma}\label{lem:nilpotence}
If a Lie algebra endomorphism $N$ of a compact semisimple Lie algebra is strictly contractive for some positive inner product, then $N$ is nilpotent.
\end{lemma}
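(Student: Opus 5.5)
We use the structure of endomorphisms of semisimple Lie algebras: the kernel and the image of $N$ are ideals, or at least interact well with ideals. Let $\gfr$ be compact semisimple and $N:\gfr\to\gfr$ a Lie algebra endomorphism with $|NX|<|X|$ for all $X\neq0$ in some inner product. Consider the descending chain of images $\gfr\supseteq N\gfr\supseteq N^2\gfr\supseteq\cdots$. It stabilizes at some $\mathfrak s=N^k\gfr$ with $N\mathfrak s=\mathfrak s$. The aim is to show $\mathfrak s=0$; then $N^k=0$.

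First, $N|_{\mathfrak s}:\mathfrak s\to\mathfrak s$ is surjective, hence bijective since $\mathfrak s$ is finite dimensional. Each $N^j\gfr$ is the image of a homomorphism, so it is a Lie subalgebra. Also $\gfr/\ker N^j\cong N^j\gfr$, and $\ker N^j$ is an ideal of the semisimple algebra $\gfr$. Hence $N^j\gfr$ is isomorphic to a quotient of $\gfr$, which is a sum of simple ideals. Thus $\mathfrak s$ is a compact semisimple subalgebra, and $N|_{\mathfrak s}$ is an automorphism of $\mathfrak s$.

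The key step is to show that an automorphism of a compact semisimple Lie algebra cannot be strictly contractive. Any automorphism $\phi$ preserves the Killing form $B_{\mathfrak s}$, which is negative definite. Therefore $\phi$ is orthogonal for $-B_{\mathfrak s}$, and all its eigenvalues have modulus one. In contrast, a strict contraction $T$ in a finite-dimensional inner product space has operator norm $\|T\|<1$, by compactness of the unit sphere. This forces every eigenvalue to satisfy $|\lambda|\le\|T\|<1$. Applying this to $N|_{\mathfrak s}$, with the restricted inner product, gives a contradiction unless $\mathfrak s=0$.

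The only point needing care is that $N\mathfrak s=\mathfrak s$ with $N|_{\mathfrak s}$ invertible, which holds because the chain stabilizes. Once $\mathfrak s=0$, we have $N^k=0$, so $N$ is nilpotent. We expect no substantial obstacle. The mild one is justifying that the image subalgebra $\mathfrak s$ is semisimple, so that its Killing form is definite. This follows from $\mathfrak s\cong\gfr/\ker N^k$ together with the fact that a compact semisimple algebra is a sum of simple ideals, so every quotient is again compact semisimple.
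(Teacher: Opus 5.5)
Your proof is correct and follows the paper's argument. You stabilize the image chain $N^k\gfr$, identify the stable image $\mathfrak s\cong\gfr/\ker N^k$ as compact semisimple so that $N|_{\mathfrak s}$ is an automorphism preserving $B_{\mathfrak s}$, and contradict the uniform contraction constant coming from compactness of the unit sphere. The only difference is cosmetic: the paper compares $|\det(N|_{\mathfrak s})|=1$ with the singular-value bound $c^{\dim\mathfrak s}<1$, whereas you compare eigenvalue moduli, which uses definiteness of $-B_{\mathfrak s}$ rather than only its nondegeneracy.
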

\begin{proof}
Let $\langle\cdot,\cdot\rangle$ be an inner product for which $N$ is strictly contractive. Compactness of its unit sphere gives a constant $c<1$ such that $|NX|\leq c|X|$ for every $X$.

The subspaces $N^k\gfr$ form a descending sequence. Their dimensions eventually stabilize, so for some $k$ we have $N^k\gfr=N^{k+1}\gfr=\mathfrak h$. Each $N^j$ is a Lie algebra homomorphism, and its kernel is an ideal. Hence $N^j\gfr\simeq\gfr/\ker N^j$ is compact semisimple. In particular, $\mathfrak h$ is compact semisimple and the restriction $A=N|_{\mathfrak h}$ is a surjective endomorphism, hence an automorphism.

Suppose $\mathfrak h\ne0$. The relation $\operatorname{ad}_{AX}=A\operatorname{ad}_XA^{-1}$ implies $B_{\mathfrak h}(AX,AY)=B_{\mathfrak h}(X,Y)$. Thus $A$ preserves the positive inner product $-B_{\mathfrak h}$, and $|\det A|=1$. On the other hand, every singular value of $A$ for the restriction of $\langle\cdot,\cdot\rangle$ is at most $c$. Therefore $|\det A|\leq c^{\dim\mathfrak h}<1$, a contradiction. We conclude that $\mathfrak h=0$, so $N^k=0$.
\end{proof}

\begin{theorem}\label{thm:parametrization}
For the fixed pair $(G,Q)$, the left-invariant BRF pairs with $[H]=[\rho]$ are in bijection with the Lie algebra endomorphisms $N:\gfr\to\gfr$ that are nilpotent as linear maps. If $N^m=0$, set
\begin{equation}\label{eq:parametrization-data}
 R_N(X,Y)=\sum_{j=0}^{m-1}Q(N^jX,N^jY),\qquad L=\id-N.
\end{equation}
The corresponding tensors, specified at the identity and extended by left translations, are
\begin{equation}\label{eq:brf-pairs}
 \begin{aligned}
 g_N(X,Y)&=R_N(LX,LY),\\
 H_N(X,Y,Z)&=R_N([LX,LY],LZ).
 \end{aligned}
\end{equation}
Both connections $\nabla^\pm$ are flat, and $\Sgen(g_N,H_N)=\tfrac16\tr_{R_N}(-B_{\gfr})$. The negative class is obtained by replacing $H_N$ with $-H_N$.
\end{theorem}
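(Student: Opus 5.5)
The proof has two directions: from a BRF pair to a nilpotent endomorphism $N$, and back. In both I would work through the projections $\pi_\pm$ of \cref{prop:parallel-fields} and the inner product $R$ of \cref{prop:critical-bound}.

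\emph{From pairs to endomorphisms.} Let $(g,H)$ be a left-invariant BRF pair in $[\rho]$. By \cref{prop:critical-bound}, $\pi_+|_{\lfr}:\lfr\to\mathfrak p_+$ is a Lie algebra isomorphism. I would set
\[
N=(\pi_+|_{\lfr})^{-1}\circ\pi_-|_{\lfr}.
\]
This only makes sense if $\pi_-(\lfr)\subseteq\mathfrak p_+$ can be identified with something in $\lfr$, so the first task is to compare $\mathfrak p_-$ with $\mathfrak p_+$. Evaluation at $e$ is injective on $\mathfrak p_-$ and, by the dimension bound, $\dim\mathfrak p_-\le n$. The cleaner route is to define $N$ on $\gfr$ by values at the identity: $NX_e=(\pi_-X)_e$, read through the isomorphism $X\mapsto(\pi_+X)_e$. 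Concretely,
\[
N\bigl((\pi_+X)_e\bigr)=(\pi_-X)_e .
\]
Three properties of $N$ are needed.
\begin{itemize}
\item $N$ is a Lie algebra endomorphism. I expect this to follow because $\pi_\pm$ are homomorphisms and $[\mathfrak p_+,\mathfrak p_-]=0$. Then $\mathfrak p_-$-fields are right-invariant-like for the $\mathfrak p_+$-action, and left-invariant-like fields evaluate brackets compatibly.
\item $N$ satisfies $R-N^*R=Q$. This is \eqref{eq:projection-pairing} evaluated at $e$, using that $g(\pi_-X,\pi_-Y)$ is constant, so it may be computed at $e$.
\item $N$ is nilpotent. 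The identity $R-N^*R=Q>0$ makes $N$ strictly $R$-contractive, and \cref{lem:nilpotence} applies.
\end{itemize}
Iterating $R=Q+N^*R$ then gives $R=R_N$. Since $X_e=(\pi_+X)_e+(\pi_-X)_e=(\id+N)(\pi_+X)_e$, the inverse map $L$ on evaluations should come out as $\id-N$ after the sign conventions are sorted out. This yields $g_N(X,Y)=R_N(LX,LY)$. The torsion formula $H=-g([U,V],W)$ on $\mathfrak p_+$ from \eqref{eq:parallel-bracket}, combined with the opposite bracket on $\lfr$, should yield $H_N$ as stated.

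\emph{From endomorphisms to pairs.} Given a nilpotent endomorphism $N$, $R_N$ is positive and satisfies $R_N-N^*R_N=Q$ by telescoping. I would verify directly that $(g_N,H_N)$ is BRF. The plan is to exhibit the vector fields that should span $\mathfrak p_+$ and $\mathfrak p_-$, namely right translations twisted by the homomorphism $N$ through $x\mapsto$ maps built from $\exp$. Two parallel frames give flatness of $\nabla^\pm$, and this implies BRF via \eqref{eq:connection-ricci}. A cheaper alternative is to observe that $H_N$ is the Cartan form of $R_N$ pulled back by the linear isomorphism $L$. I would try that first, although $L$ is not a Lie algebra map, so the bracket pullback must be checked honestly.

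\emph{Class, injectivity and main obstacle.} For the cohomology class I would compute $\xi_X$ at $e$ and check the pairing identity \eqref{eq:positive-pairing}, which by \cref{prop:properness} pins down the class $[\rho]$. Injectivity of the correspondence follows because $N$ is recovered from $(g,H)$ via $\Phi$, which is canonical. The value of $\Sgen$ is \eqref{eq:critical-value} with $R=R_N$. I expect the main obstacle to be the converse direction: showing that the pair built from an arbitrary nilpotent $N$ is genuinely left-invariant, BRF and in $[\rho]$, with the sign conventions between $\lfr\simeq\gfr^{\mathrm{op}}$ and $L=\id-N$ coming out consistently.
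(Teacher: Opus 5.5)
There are genuine gaps. The first is a sign error in the necessity direction that makes your first bullet false. Write $\tau X=(\pi_+X)_e$ and identify $\lfr$ with $\gfr$ by evaluation. Your definition $N_{\mathrm s}\tau X=(\pi_-X)_e$, combined with $X_e=\tau X+(\pi_-X)_e$, gives $\tau^{-1}=\id+N_{\mathrm s}$. In fact $\tau^{-1}=L=\id-N$ for the endomorphism $N$ of the theorem. Hence $N_{\mathrm s}=-N$, and so $N_{\mathrm s}[X,Y]=-[N_{\mathrm s}X,N_{\mathrm s}Y]$.

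This map is an endomorphism only when it vanishes. For the pair of \cref{rem:product-model}, your recipe yields $(a,b)\mapsto(0,-a)$, which does not preserve the bracket of $\mathfrak k\oplus\mathfrak k$, so \cref{lem:nilpotence} cannot be applied to it. The missing idea is that the map $J\colon\mathfrak p_-\to\mathfrak p_+$ matching values at $e$ \emph{reverses} brackets. The torsions of $\nabla^\pm$ are opposite, so $[U,V]=-H^\sharp(U,V)$ on $\mathfrak p_+$ and $[W,Z]=H^\sharp(W,Z)$ on $\mathfrak p_-$, which gives $J[W,Z]=-[JW,JZ]$. Your heuristic from $[\mathfrak p_+,\mathfrak p_-]=0$ points the same way: commuting families behave like right- and left-invariant fields, whose brackets at a common point are opposite. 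The correct definition is $N=-T^{-1}J\pi_-|_{\lfr}$ with $T=\pi_+|_{\lfr}$, which is a composite of homomorphisms. Then $X=\tau X-\tau NX$ gives $\tau=L^{-1}$, and this is exactly the sign you left unresolved.

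The converse, which you rightly call the main obstacle, is not supplied. Taking the $L$-pullback of $(R_N,\rho_{R_N})$ at $e$ and extending by left translations is just the definition of $(g_N,H_N)$. It proves nothing about the BRF equations, because left-invariant extension does not commute with an arbitrary linear change at $e$. You need a diffeomorphism with differential $L$ at $e$ that intertwines translations.

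The paper integrates $N$ to a homomorphism $\varphi$ of the simply connected cover $\widetilde G$ and sets $f(x)=x\varphi(x)^{-1}$. Nilpotence enters exactly here: $\varphi^m$ is trivial, so $f$ has the inverse $y\mapsto y\varphi(y)\cdots\varphi^{m-1}(y)$. Since $f(ax)=af(x)\varphi(a)^{-1}$ and $df_e=L$, the pullback $f^*(R_N,\rho_{R_N})$ of the bi-invariant standard pair is left-invariant, BRF with flat $\nabla^\pm$, and equal to $(g_N,H_N)$ at $e$. It descends to $G$ because deck transformations are central left translations.

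Your remaining steps also need more.
\begin{itemize}
\item \textbf{Cohomology class.} \cref{prop:properness} assumes $[H]=[\rho]$ rather than detecting it, and you have no formula for $\xi_X$ of the constructed pair. The paper instead exhibits $H_N=\rho+d\beta_N$ with $\beta_N(X,Y)=R_N(X,NY)-R_N(NX,Y)$.
\item \textbf{Injectivity.} Canonicity of $\Phi$ shows only that the map from pairs to endomorphisms is well defined. You must also check that this map returns $N$ on $(g_N,H_N)$. This follows from $f_*\widehat X=X^R-(NX)^L$, where $\widehat X$ is the right-invariant field for the original multiplication; that is, $\pi_+$ and $\pi_-$ of the model are right- and left-invariant fields.
\end{itemize}
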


This parametrization is for tensor pairs with the given multiplication on $G$, without taking a quotient by automorphisms or diffeomorphisms. The standard pair corresponds to $N=0$.

\begin{proof}[Proof of \cref{thm:parametrization}]
\emph{Necessity and the sign of the endomorphism.} Let $T=\pi_+|_{\lfr}\colon\lfr\to\mathfrak p_+$ be the Lie algebra isomorphism from \cref{prop:critical-bound}. Evaluation at $e$ is an isomorphism from $\mathfrak p_+$ onto $T_eG$. Consequently, for every $W\in\mathfrak p_-$ there is a unique $JW\in\mathfrak p_+$ such that $(JW)_e=W_e$. This defines a linear map $J\colon\mathfrak p_-\to\mathfrak p_+$. Its definition requires surjectivity of evaluation only on $\mathfrak p_+$.

The two parallel spaces have opposite bracket signs because the torsions of $\nabla^+$ and $\nabla^-$ are opposite. More precisely,
\begin{equation}\label{eq:parallel-opposite-brackets}
 \begin{aligned}
 [U,V]&=-H^\sharp(U,V) &&(U,V\in\mathfrak p_+),\\
 [W,Z]&= H^\sharp(W,Z) &&(W,Z\in\mathfrak p_-),\\
 J[W,Z]&=-[JW,JZ] &&(W,Z\in\mathfrak p_-).
 \end{aligned}
\end{equation}
The first two identities follow from the definition of torsion. For the third, both sides belong to $\mathfrak p_+$, and their values at $e$ agree by the first two identities and the definition of $J$. Injectivity of evaluation on $\mathfrak p_+$ proves it. Thus $J$ reverses brackets, while $-J$ is a Lie algebra homomorphism.

We now define
\begin{equation}\label{eq:intrinsic-endomorphism}
 N=-T^{-1}J\pi_-|_{\lfr}\colon\lfr\longrightarrow\lfr.
\end{equation}
Each factor on the right of \eqref{eq:intrinsic-endomorphism}, with the minus sign attached to $J$, is a Lie algebra homomorphism. Hence $N$ is an endomorphism of $\lfr$. Under evaluation at $e$, the bracket of $\lfr$ is $[X,Y]_{\lfr}=-[X,Y]_{\gfr}$. Reversing both brackets in the identity $N[X,Y]_{\lfr}=[NX,NY]_{\lfr}$ shows that the same linear map is an endomorphism of $\gfr$.

The metric transported by $T$ is $R(X,Y)=g(TX,TY)$. Since $J$ preserves the values of fields at $e$, equation~\eqref{eq:intrinsic-endomorphism} gives $g(\pi_-X,\pi_-Y)=R(NX,NY)$. These pairings are constant, so this equality holds on all of $G$. Equation~\eqref{eq:projection-pairing} therefore becomes
\begin{equation}\label{eq:contraction}
 R(X,Y)-R(NX,NY)=Q(X,Y).
\end{equation}
Let $\mu$ be the minimum of $Q(X,X)$ on the $R$-unit sphere. It is positive. Since $Q\leq R$, we have $0<\mu\leq1$, and \eqref{eq:contraction} gives $R(NX,NX)\leq(1-\mu)R(X,X)$ for every $X$. Thus $N$ is strictly contractive for $R$, including the case $\mu=1$, when $N=0$. By \cref{lem:nilpotence}, some power $N^m$ vanishes. Iterating \eqref{eq:contraction} then yields
\begin{equation}\label{eq:iterated-contraction}
 R(X,Y)=\sum_{j=0}^{m-1}Q(N^jX,N^jY)+R(N^mX,N^mY)=R_N(X,Y).
\end{equation}
In particular, the inner product required by the stationary pair is determined by $N$ and $Q$ through \eqref{eq:iterated-contraction}.

\emph{Recovery of the original tensors.} We identify the underlying vector spaces of $\lfr$ and $\gfr$ by evaluation at $e$ and write $\tau X=(TX)_e$. From \eqref{eq:intrinsic-endomorphism} we have $(\pi_-X)_e=-\tau NX$. Evaluating $X=\pi_+X+\pi_-X$ therefore gives $X=\tau X-\tau NX$, or $\tau L=\id$. Since $N$ is nilpotent, $L=\id-N$ is invertible, with $L^{-1}=\id+N+\cdots+N^{m-1}$. Thus $\tau=L^{-1}$.

The metric and torsion in the parallel frame now determine their values in the original coordinates. For $X,Y,Z\in\gfr$, using the opposite bracket on $\lfr$ gives
\begin{equation}\label{eq:parametrization-evaluation}
 \begin{aligned}
 g_e(\tau X,\tau Y)&=R(X,Y),\\
 H_e(\tau X,\tau Y,\tau Z)
 &=-g([TX,TY],TZ)=-R([X,Y]_{\lfr},Z)\\
 &=R([X,Y]_{\gfr},Z).
 \end{aligned}
\end{equation}
Substituting $LX,LY,LZ$ into \eqref{eq:parametrization-evaluation}, and using $R=R_N$, gives exactly \eqref{eq:brf-pairs}. This also explains why the Cartan expression in that formula has a positive sign, although the bracket of the positive parallel fields has the negative sign in \eqref{eq:parallel-opposite-brackets}.

\emph{Construction on the simply connected cover.} Conversely, let $N$ be a Lie algebra endomorphism with $N^m=0$, and define $R_N$ and $L$ by \eqref{eq:parametrization-data}. The first term of the sum defining $R_N$ is $Q$, so $R_N$ is positive definite. For every $j$, bracket preservation gives $N^j[Z,X]=[N^jZ,N^jX]$. Ad-invariance of $Q$ then implies
\begin{equation}\label{eq:pullback-invariant-inner-products}
 Q(N^j[Z,X],N^jY)+Q(N^jX,N^j[Z,Y])=0.
\end{equation}
Summing \eqref{eq:pullback-invariant-inner-products} proves ad-invariance of $R_N$. Subtracting its defining sum with $X,Y$ replaced by $NX,NY$ gives $R_N-N^*R_N=Q$, since the final term contains $N^m=0$. Thus \eqref{eq:contraction} holds for this construction.

Let $p\colon\widetilde G\to G$ be the simply connected covering homomorphism. It is a finite cover because $G$ is compact semisimple, and $\widetilde G$ is a compact Lie group with Lie algebra $\gfr$. Lie integration gives a group homomorphism $\varphi\colon\widetilde G\to\widetilde G$ with differential $N$. Its $m$-fold iterate has zero differential, so $\varphi^m$ is the constant homomorphism to the identity. Define
\begin{equation}\label{eq:nilpotent-model-diffeomorphism}
 \begin{aligned}
 f(x)&=x\varphi(x)^{-1},\\
 f^{-1}(y)&=y\varphi(y)\cdots\varphi^{m-1}(y).
 \end{aligned}
\end{equation}
The order of the factors in \eqref{eq:nilpotent-model-diffeomorphism} is essential. Indeed, $f(x)=y$ is equivalent to $x=y\varphi(x)$. Iterating this identity gives $x=y\varphi(y)\cdots\varphi^{m-1}(y)$ because $\varphi^m(x)=e$. Conversely, if $P(y)$ denotes this ordered product, then $P(y)=y\varphi(P(y))$, and hence $f(P(y))=y$. These two identities show that $f$ is a global diffeomorphism, with the stated smooth inverse.

The homomorphism property of $\varphi$ gives
\begin{equation}\label{eq:model-translation-equivariance}
 f(ax)=a f(x)\varphi(a)^{-1},\qquad
 f(e)=e,\qquad (df)_e=\id-N=L.
\end{equation}
Let $(R_N,\rho_{R_N})$ denote the standard bi-invariant pair on $\widetilde G$. Both its metric and its Cartan form are invariant under left and right translations. Equation~\eqref{eq:model-translation-equivariance} therefore shows that $f^*(R_N,\rho_{R_N})$ is left-invariant for the original multiplication on $\widetilde G$. Evaluating its tensors at $e$ gives the two formulas in \eqref{eq:brf-pairs}. Since the standard pair is BRF and both its Bismut connections are flat, its pullback has the same properties.

Every deck transformation of $p$ is left translation by an element of the finite central kernel. Thus the left-invariant tensors just constructed descend to $G$, where they are precisely the left-invariant extensions prescribed in the theorem. Closedness, the BRF equations and flatness descend because they can be checked after pullback by a covering map. This argument concerns the tensors themselves. It does not require $\varphi$ or $f$ to induce a map on the given quotient $G$.

\emph{The cohomology class on $G$.} The pullback construction establishes the local geometric equations. To verify the required class on the original group, we exhibit an invariant primitive there. Define the two-form
\begin{equation}\label{eq:torsion-primitive}
 \beta_N(X,Y)=R_N(X,NY)-R_N(NX,Y).
\end{equation}
Its skew-symmetry follows from symmetry of $R_N$. All brackets in this part of the proof are the original bracket on $\gfr$. We write $\sum_{\rm cyc}$ for the sum over $(X,Y,Z)$, $(Y,Z,X)$ and $(Z,X,Y)$. By the invariant differential formula \eqref{eq:invariant-differential}, the definition of $\beta_N$ gives
\begin{equation}\label{eq:torsion-primitive-differential}
 \begin{aligned}
 d\beta_N(X,Y,Z)
 &=-\sum_{\rm cyc}\beta_N([X,Y],Z)\\
 &=-\sum_{\rm cyc}R_N([X,Y],NZ)
   +\sum_{\rm cyc}R_N(N[X,Y],Z)\\
 &=-\sum_{\rm cyc}R_N([X,Y],NZ)
   +\sum_{\rm cyc}R_N([NX,NY],Z).
 \end{aligned}
\end{equation}
The last equality uses the homomorphism property of $N$.

For completeness, expanding the three factors of $L=\id-N$ in $H_N$ gives
\begin{equation}\label{eq:cartan-pullback-expansion}
 \begin{aligned}
 H_N(X,Y,Z)
 ={}&R_N([X,Y],Z)\\
 &-R_N([NX,Y],Z)-R_N([X,NY],Z)-R_N([X,Y],NZ)\\
 &+R_N([NX,NY],Z)+R_N([NX,Y],NZ)+R_N([X,NY],NZ)\\
 &-R_N([NX,NY],NZ).
 \end{aligned}
\end{equation}
Ad-invariance makes $R_N([A,B],C)$ an alternating three-form, and hence it is unchanged by cyclic permutation. The three terms containing exactly one occurrence of $N$ in \eqref{eq:cartan-pullback-expansion} therefore give the first cyclic sum in \eqref{eq:torsion-primitive-differential}, with its negative sign. The three terms containing exactly two occurrences give the second cyclic sum.

Bracket preservation identifies the last term in \eqref{eq:cartan-pullback-expansion} with $(N^*R_N)([X,Y],Z)$. By \eqref{eq:contraction}, the difference between the first and last terms is the Cartan form $\rho$ evaluated on $X,Y,Z$.

Comparing \eqref{eq:torsion-primitive-differential} with \eqref{eq:cartan-pullback-expansion} proves
\begin{equation}\label{eq:torsion-expansion}
 \begin{aligned}
 d\beta_N(X,Y,Z)
 &=-\sum_{\rm cyc}R_N([X,Y],NZ)
       +\sum_{\rm cyc}R_N([NX,NY],Z)\\
 &=H_N(X,Y,Z)-\rho(X,Y,Z).
 \end{aligned}
\end{equation}
All forms in \eqref{eq:torsion-expansion} are defined by left translation on $G$ itself. Hence $H_N=\rho+d\beta_N$ proves $[H_N]=[\rho]$ on the required group, independently of whether the model diffeomorphism descends.

\emph{Uniqueness of the parameter.} We verify that the intrinsic construction \eqref{eq:intrinsic-endomorphism} recovers the endomorphism used above. Work first on $\widetilde G$, and distinguish its original pair from the standard pair to which $f$ maps it. On the standard model, write $X^R$ and $X^L$ for the right- and left-invariant fields labeled by $X\in\gfr$. The right-invariant fields are $\nabla^+$-parallel and the left-invariant fields are $\nabla^-$-parallel. This follows either directly from $\nabla^g_{X^L}Y^L=\frac12[X,Y]^L$ and $H^\sharp(X^L,Y^L)=[X,Y]^L$, or from the corresponding formulas for right-invariant fields. Each family is a global frame, so it is the full corresponding parallel space.

For $X\in\gfr$, let $\widehat X$ denote the corresponding right-invariant translation field for the original multiplication. Differentiating \eqref{eq:model-translation-equivariance} with $a=\exp(tX)$ gives
\begin{equation}\label{eq:model-projected-translation}
 \begin{aligned}
 f_*\widehat X&=X^R-(NX)^L,\\
 \pi_+(f_*\widehat X)&=X^R,\qquad
 \pi_-(f_*\widehat X)=-(NX)^L.
 \end{aligned}
\end{equation}
Here the projections on the right are those of the standard model. The involution and its projections are natural under $f$, because the defining coclosed primitives are unique and metric duality commutes with an isometry. Therefore \eqref{eq:model-projected-translation} computes the pushforwards of the original projections as well.

The point $e$ is fixed by $f$. Consequently the map that matches the values of a negative parallel field and a positive parallel field at $e$ is also transported by $f$. On the standard model this map sends $Y^L$ to $Y^R$. Thus $J$ sends the negative component $-(NX)^L$ in \eqref{eq:model-projected-translation} to $-(NX)^R$. Applying $-T^{-1}$ recovers the label $NX$. This proves that distinct endomorphisms cannot produce the same pair on the simply connected cover.

To pass to $G$, lift a translation field and its coclosed primitive through the finite covering $p$. Pullback commutes with $d$ and $\delta_g$ because the covering is a local isometry for the lifted metric. The lifted primitive is therefore the canonical primitive of the lifted field, by uniqueness on $\widetilde G$. In particular, $\Phi$, $\pi_\pm$, evaluation at $e$ and the recovered endomorphism are compatible with this cover. If two parameters produced the same pair on $G$, their pullbacks would produce the same pair on $\widetilde G$, where we have just proved that the parameters agree. This establishes bijectivity on the given group. Flatness and the critical-value formula follow from \cref{prop:critical-bound}, and replacing $H_N$ by $-H_N$ gives the negative class.
\end{proof}

The construction describes the original translations as an action by left and right translations on a bi-invariant model by $a\cdot y=ay\varphi(a)^{-1}$. The identity $f(ax)=a\cdot f(x)$ identifies this action with the usual left action on the simply connected cover. For background on transitive actions on compact group manifolds, see \cite{Oz1977}. At the Lie algebra level, the graph $\{(X,NX):X\in\gfr\}$ is complementary to the diagonal in $\gfr\oplus\gfr$, since $\id-N$ is invertible. This description by complementary subalgebras appears in \cite[Proposition~2.14]{BuDeVe2012}. The restriction imposed here by the Cartan class is the positive identity \eqref{eq:contraction}, which forces nilpotence of $N$.

\begin{remark}\label{rem:product-model}
Let $\gfr=\mathfrak k\oplus\mathfrak k$, with $\mathfrak k$ compact simple, and take $Q=Q_0\oplus Q_0$, $Q_0=-B_{\mathfrak k}$. For $N(X,Y)=(0,X)$ we have $N^2=0$, so $R_N=Q+N^*Q=2Q_0\oplus Q_0$ and $L(X,Y)=(X,Y-X)$. Substitution in \eqref{eq:brf-pairs} and \eqref{eq:torsion-primitive} gives
\begin{equation}\label{eq:product-stationary-data}
 \begin{aligned}
 g_N&=\begin{pmatrix}3&-1\\-1&1\end{pmatrix}\otimes Q_0,\\
 \beta_N((X,Y),(Z,W))&=Q_0(Y,Z)-Q_0(X,W),\qquad
 H_N=\rho+d\beta_N.
 \end{aligned}
\end{equation}
In the $Q$-orthogonal decomposition $X^{\pm}=(X,\pm X)/\sqrt2$ into diagonal and anti-diagonal subspaces, the metric matrix in \eqref{eq:product-stationary-data} becomes $\left(\begin{smallmatrix}1&1\\1&3\end{smallmatrix}\right)$, and $\beta_N(X^+,Y^-)=Q_0(X,Y)$. Thus these are the metric $g_{1,3,1}$ and two-form $\beta$ of \cite[Proposition~7.3]{FuLaSt2026}. If $d=\dim\mathfrak k$, then $\tr_{R_N}(-B_{\gfr})=d/2+d$, so its critical value is $d/4$, strictly below the standard value $d/3$.
\end{remark}

\begin{proof}[Proof of \cref{thm:rigidity}]
For a compact connected simple group, $\HH^3(G)=\mathbb R[\rho_0]$, where $Q_0=-B_{\gfr}$ and $\rho_0(X,Y,Z)=Q_0([X,Y],Z)$. See also \cite[Section~6]{LaWi2023}. Thus every possible torsion class has the form $[H]=h[\rho_0]$. We first exclude $h=0$, so that the definite-class parametrization applies to every BRF pair on this group.

If $h=0$, write $H=d\beta$. Since $H$ is harmonic, integration by parts gives $\|H\|_{L^2,\ext}^2=\langle\beta,\delta_gH\rangle_{L^2,\ext}=0$. Hence $H=0$ and the BRF equation reduces to $\Ric(g)=0$. For a Killing field $Z$, the usual integrated Bochner identity gives $\int_G|\nabla^gZ|_g^2\,dV_g=\int_G\Ric(Z,Z)\,dV_g=0$. Thus $Z$ is parallel, and its dual is a harmonic one-form. The vanishing of $\HH^1(G)$ forces $Z=0$. This contradicts the existence of the nonzero translation fields on the nontrivial group. Therefore $h\ne0$.

Set $\varepsilon=\operatorname{sign}(h)$ and $Q=|h|Q_0$. Its Cartan form is $\rho=|h|\rho_0$, and the pair $(g,\varepsilon H)$ is BRF with torsion class $[\rho]$. Apply \cref{thm:parametrization} to obtain a nilpotent endomorphism $N$ of $\gfr$. The kernel of a Lie algebra endomorphism is an ideal. Simplicity therefore implies that a nonzero endomorphism is injective, hence an automorphism in finite dimensions. Such a map cannot be nilpotent, so $N=0$. The formulas in \eqref{eq:parametrization-data} and \eqref{eq:brf-pairs} now give $g=|h|Q_0$ and $\varepsilon H=|h|\rho_0$, or $H=h\rho_0$, for the original group multiplication. Conversely, these standard pairs are BRF, with both connections $\nabla^\pm$ flat, as proved in the construction above.
\end{proof}

\begin{remark}\label{rem:simple-limit}
By \cref{thm:flow,thm:rigidity}, every invariant flow with $[H_0]=\varepsilon[\rho]$ on a compact simple group converges to $(Q,\varepsilon\rho)$. This proves the simple-group case of \cite[Conjecture~4.14]{GaSt2021} under the weaker cohomology condition.
\end{remark}
\section{Cartan initial forms and nonstandard limits}\label{sec:cartan}
On a simple group, the preceding rigidity theorem identifies the limit of every flow in the prescribed class. On a product, the question is which stationary pairs can occur as limits when $H_0=\rho$. We use the pair of Remark~\ref{rem:product-model} and the invariant family introduced in \cite[Section~7]{FuLaSt2026}. To prove Theorem~\ref{thm:cartan}, we estimate a trajectory starting away from that pair. The existence and instability of the stationary pair alone do not determine the limit of a flow starting with the Cartan form.

Let $K$ be compact and connected with nonabelian simple Lie algebra $\mathfrak k$, and put $q=-B_{\mathfrak k}$ and $Q=q\oplus q$ on $G=K\times K$. With $\beta$ as in \eqref{eq:product-beta}, write
\begin{equation}\label{eq:product-family}
 g_{x,y,z}=P\otimes q,\qquad
 P=\begin{pmatrix}x&z\\z&y\end{pmatrix}>0,\qquad
 H_r=\rho+r\,d\beta,\qquad D=xy-z^2.
\end{equation}
The condition $P>0$ means $x>0$, $y>0$ and $D>0$. These tensors are invariant under left translations and under right translations by the diagonal subgroup $\Delta K$. The adjoint representation of $K$ is absolutely irreducible. Thus an equivariant endomorphism of $\mathfrak k$ is a scalar multiple of the identity. Applying this observation to the four blocks of a bilinear form on $\mathfrak k\oplus\mathfrak k$ shows that the invariant symmetric forms are precisely $P\otimes q$. An invariant alternating form vanishes on each diagonal block and has one scalar off-diagonal block. It is therefore a multiple of $\beta$. Compare \cite[Lemma~7.1]{FuLaSt2026}.

The Ricci tensor and $H_g^2$ preserve these symmetries. The same is true of $\delta_g$, so $\delta_gH_r$ is a multiple of $\beta$. Consequently the metric equation stays within the matrices $P$, and the three-form equation stays within $\rho+\mathbb R\,d\beta$. This proves that \eqref{eq:product-family} is preserved by \eqref{eq:grf}. In what follows, we compute its four-dimensional vector field and then identify a region from which the flow converges to the nonstandard pair.

In product coordinates, choose a $q$-orthonormal basis $e_i$ of $\mathfrak k$, set $c_{ijk}=q([e_i,e_j],e_k)$, and let $e_{a,i}$ denote $e_i$ in factor $a\in\{1,2\}$. The brackets between the two factors vanish, and the normalization $q=-B_{\mathfrak k}$ gives
\begin{equation}\label{eq:product-bracket-normalization}
 [e_{a,i},e_{b,j}]=\delta_{ab}\sum_k c_{ijk}e_{a,k},\qquad
 \sum_{j,k}c_{ijk}c_{\ell jk}=\delta_{i\ell}.
\end{equation}
In particular, $\sum_{i,j,k}c_{ijk}^2=d$, where $d=\dim K$. The tensor $c$ is alternating because $q$ is ad-invariant. The components of $H_r$ are
\begin{equation}\label{eq:product-components}
 H_r(e_{a,i},e_{b,j},e_{c,k})=h_{abc}c_{ijk},\qquad
 h_{111}=h_{222}=1,\quad h_{112}=r,\quad h_{122}=-r,
\end{equation}
where $h$ is symmetric in the factor indices. Indeed, \eqref{eq:invariant-differential} gives $d\beta(e_{1,i},e_{1,j},e_{2,k})=c_{ijk}$ and $d\beta(e_{1,i},e_{2,j},e_{2,k})=-c_{ijk}$.

Although $H_r$ is alternating, the array $h$ in \eqref{eq:product-components} is symmetric. Interchanging two arguments changes the sign of $c_{ijk}$, so it must leave the factor coefficient unchanged. This accounts for the multiplicities of the mixed components in the contractions below.

Write $p^{ab}=(P^{-1})_{ab}$. The inverse metric in the fixed product basis is $g^{(a,i)(b,j)}=p^{ab}\delta_{ij}$. To apply the classical Ricci formula \cite{Mi1976}, let $E_I$ denote this basis, with a capital index representing both a factor and a Lie algebra index. Unimodularity gives
\begin{align}
 \Ric(U,V)
 &=-\frac12 B_{\mathfrak g}(U,V)
   -\frac12\sum_{I,J}g^{IJ}g([U,E_I],[V,E_J])\notag\\
 &\quad+\frac14\sum_{I,J,L,M}g^{IJ}g^{LM}
       g([E_I,E_L],U)g([E_J,E_M],V).
 \label{eq:product-milnor}
\end{align}
Compare \cite[Remark~3.2]{FuLaSt2026}. Inserting $U=e_{a,i}$ and $V=e_{b,j}$, the Killing term is $\frac12\delta_{ab}\delta_{ij}$. The second contraction is $P_{ab}p^{ab}\delta_{ij}$, with no sum over $a,b$. For the last term, brackets are nonzero only within one factor. Its contraction is therefore $\sum_{c,e}P_{ac}P_{be}(p^{ce})^2\delta_{ij}$ by \eqref{eq:product-bracket-normalization}. Thus \eqref{eq:product-milnor} and the definition of $H_g^2$ give $\Ric(g)=R\otimes q$ and $H_g^2=T\otimes q$, where
\begin{align}
 R_{ab}&=\tfrac12\delta_{ab}-\tfrac12P_{ab}p^{ab}
       +\tfrac14\sum_{c,e=1}^2P_{ac}P_{be}(p^{ce})^2,
       \label{eq:product-ricci}\\
 T_{ab}&=\sum_{c,e,u,v=1}^2h_{ace}h_{buv}p^{cu}p^{ev}.
       \label{eq:product-torsion}
\end{align}
For example, the contraction defining $T_{ab}\delta_{ij}$ contains the sum $\sum_{k,\ell}c_{ik\ell}c_{jk\ell}$. Equation~\eqref{eq:product-bracket-normalization} removes this sum and leaves exactly \eqref{eq:product-torsion}. These formulas use the full tensor contraction for $H_g^2$.

For symmetric factor arrays $h,k$, set $C_P(h,k)=\sum_{a,b,c,u,v,w=1}^2 h_{abc}k_{uvw}p^{au}p^{bv}p^{cw}$, and let $h'=\partial_rh$. The adjoint identity for $d$ uses exterior-form inner products. The factor $1/6$ for three-forms gives
\begin{equation}\label{eq:product-exterior-contractions}
 |\beta|_{\ext,g}^2=d\bigl(p^{11}p^{22}-(p^{12})^2\bigr)=\frac dD,
 \qquad
 \langle H_r,d\beta\rangle_{\ext,g}=\frac d6 C_P(h,h').
\end{equation}
Write $\delta_gH_r=\sigma\beta$. All pointwise inner products here are constant on the group. Hence the volume cancels from the integrated adjoint identity, which becomes $\sigma|\beta|_{\ext,g}^2=\langle H_r,d\beta\rangle_{\ext,g}$. By \eqref{eq:product-exterior-contractions}, $\sigma=DC_P(h,h')/6$. Since $\partial_tH_r=\dot r\,d\beta$, the reduced system is
\begin{equation}\label{eq:product-flow}
 \dot P=-2R+\tfrac12T,\qquad
 \dot r=-\frac D6C_P(h,h').
\end{equation}
The contraction formulas also give compact scalar expressions for all four equations. They will make the invariant surface and the limiting system at $x=\infty$ directly checkable. Put $A=\operatorname{adj}P$, and let $B_a=(h_{ace})_{c,e=1}^2$. Explicitly,
\begin{equation}\label{eq:product-contraction-matrices}
 A=\begin{pmatrix}y&-z\\-z&x\end{pmatrix},\qquad
 B_1=\begin{pmatrix}1&r\\r&-r\end{pmatrix},\qquad
 B_2=\begin{pmatrix}r&-r\\-r&1\end{pmatrix}.
\end{equation}
Using the matrices in \eqref{eq:product-contraction-matrices}, equation \eqref{eq:product-torsion} reads $D^2T_{ab}=\tr(B_aAB_bA)$. Define $L_1=\tr(B_1A)=y-r(x+2z)$ and $L_2=\tr(B_2A)=x+r(y+2z)$. Using $\tr(C^2)=(\tr C)^2-2\det C$ for a two-by-two matrix, and then polarizing this identity, gives
\begin{equation}\label{eq:product-torsion-factored}
 \begin{aligned}
 D^2T_{11}&=L_1^2+2r(r+1)D,\\
 D^2T_{22}&=L_2^2+2r(r-1)D,\\
 D^2T_{12}&=L_1L_2-(1+r^2)D.
 \end{aligned}
\end{equation}
For the mixed term, the coefficient of $t$ in $\det(B_1+tB_2)$ is $1+r^2$, which fixes its sign. In the same notation, \eqref{eq:product-ricci} simplifies to
\begin{equation}\label{eq:product-ricci-factored}
 \begin{aligned}
 4D^2R_{11}&=D^2+z^2(x+z)^2,\\
 4D^2R_{22}&=D^2+z^2(y+z)^2,\\
 4D^2R_{12}&=z\bigl(xy(x+y+3z)-z^3\bigr).
 \end{aligned}
\end{equation}
To record the three-form equation, set $\mathcal B=xy(x+y+4z)+2z^2(x+y+z)$. Contracting the two mixed types in $h'$ gives $D^3C_P(h,h')=3z(x-y)(x+y+z)+3r\mathcal B$. Therefore \eqref{eq:product-flow}, together with \eqref{eq:product-torsion-factored} and \eqref{eq:product-ricci-factored}, is the explicit system
\begin{equation}\label{eq:product-coordinate-flow}
 \begin{aligned}
 2D^2\dot x&=L_1^2+2r(r+1)D-D^2-z^2(x+z)^2,\\
 2D^2\dot y&=L_2^2+2r(r-1)D-D^2-z^2(y+z)^2,\\
 2D^2\dot z&=L_1L_2-(1+r^2)D
                  -z\bigl(xy(x+y+3z)-z^3\bigr),\\
 2D^2\dot r&=-z(x-y)(x+y+z)-r\mathcal B.
 \end{aligned}
\end{equation}
All coefficients are rational functions, smooth on $P>0$. In particular, the last equation of \eqref{eq:product-coordinate-flow} gives
\begin{equation}\label{eq:cartan-derivative}
 \left.\dot r\right|_{r=0}
 =-\frac{z(x-y)(x+y+z)}{2D^2}.
\end{equation}
The right-hand side of \eqref{eq:cartan-derivative} is generally nonzero. Thus the initial equality $H_0=\rho$ need not persist along the flow.

The two stationary points of interest are $(1,1,0,0)$ and $p=(3,1,-1,1)$. The first represents $(Q,\rho)$, and the second represents the pair $(g_*,H_*)$ in Theorem~\ref{thm:cartan}. Both lie on the surface
\begin{equation}\label{eq:product-surface}
 \Sigma=\{y=1,\ z=-r,\ x>r^2\}.
\end{equation}
To verify the invariance of \eqref{eq:product-surface}, substitute $y=1$ and $z=-r$ in \eqref{eq:product-coordinate-flow}. The numerator of $\dot y$ vanishes, and the numerators of $\dot z$ and $\dot r$ sum to zero. Hence $\dot y=0$ and $(z+r)^{\boldsymbol\cdot}=0$ there. Since $D=x-r^2$, the remaining equations are
\begin{align}
 \dot x&=-\frac{(x+1)(x-1+2r^3-4r^2)}{2(x-r^2)^2},
 \label{eq:surface-x}\\
 \dot r&=-\frac{r(r-1)((2r-1)x-2r^2-1)}{2(x-r^2)^2}.
 \label{eq:surface-r}
\end{align}
We must also control solutions that are close to $\Sigma$ but do not lie on it. Define $\nu=y-1$, $w=z+r$ and $\eta=(\nu^2+w^2)^{1/2}$. Holding $x,r$ fixed, let $J(x,r)$ be the derivative at $(\nu,w)=(0,0)$ of the right-hand side of $(\dot\nu,\dot w)$. Differentiating \eqref{eq:product-coordinate-flow} gives
\begin{equation}\label{eq:product-normal-matrix}
 J(x,r)=-\id+\frac{1}{2(x-r^2)^2}
 \begin{pmatrix}
 2r^3(r-1)&2r(r-1)(2r-1)\\
 -r^2(x+1)&-(2r^2-2r+1)x+2r^2(r-1)^2-2r+1
 \end{pmatrix}.
\end{equation}
In this differentiation, $\partial_\nu=\partial_y$ and $\partial_w=\partial_z$, because $r$ is fixed. The normal numerator vanishes on $\Sigma$, so differentiating the common denominator contributes no additional term. At $p$, \eqref{eq:product-normal-matrix} becomes
\begin{equation}\label{eq:normal-p}
 J_p=\begin{pmatrix}-1&0\\-1/2&-3/2\end{pmatrix}.
\end{equation}

The system also has a regular limit as $x\to\infty$. This limit describes the first part of a trajectory whose initial $x$ coordinate is large. Set $s=1/x$. Each numerator in \eqref{eq:product-coordinate-flow} has degree at most two in $x$. Dividing its numerator and denominator by $x^2$ writes each right-hand side as a polynomial in $(s,y,z,r)$ divided by $2(y-sz^2)^2$. In particular, the expression for $\dot x$ itself extends smoothly to $s=0$, with value $(r^2-y^2-z^2)/(2y^2)$. The actual first coordinate of the new vector field is $\dot s=-s^2\dot x$. It vanishes at $s=0$, and the other boundary equations are
\begin{equation}\label{eq:product-boundary}
 \dot y=\frac{1-y^2}{2y^2},\qquad
 \dot z=-\frac{yz+r}{2y^2},\qquad
 \dot r=-\frac{yr+z}{2y^2}.
\end{equation}
Thus the vector field in $(s,y,z,r)$ extends smoothly to an open neighborhood of every point with $s=0$ and $y>0$. At that boundary, \eqref{eq:product-boundary} gives $\dot w=-(y+1)w/(2y^2)$. Linearizing its $y,w$ equations at $y=1,w=0$ gives $-\id$. Equivalently, the explicit expression \eqref{eq:product-normal-matrix} gives
\begin{equation}\label{eq:normal-infinity}
 J(x,r)=-\id+O(x^{-1})
\end{equation}
uniformly for $r$ in any fixed compact interval. Indeed, the numerator of the correction in \eqref{eq:product-normal-matrix} is at most linear in $x$, whereas the denominator is quadratic. This uniformity will allow a single transverse estimate on the whole region $x\ge X$.

The linearization at $p$ has a zero eigenvalue, so it does not determine stability. The following lemma gives a region of attraction that allows small transverse errors.

\begin{lemma}\label{lem:cartan-basin}
For every fixed $X>3$ there are $\varepsilon_X,c_X>0$ such that a solution with initial values $x=X$, $0<r-1<\varepsilon_X$ and $\eta<c_X(r-1)$ exists for all positive time and converges to $p$.
\end{lemma}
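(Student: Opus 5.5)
The plan is to study the flow near the invariant curve $\Gamma=\{y=1,\ z=-1,\ r=1,\ x>1\}\subset\Sigma$. On $\Gamma$ the factor $r-1$ in \eqref{eq:surface-r} gives $\dot r=0$, and \eqref{eq:surface-x} reduces to $\dot x=-(x+1)(x-3)/(2(x-1)^2)$. Hence the solution $\gamma$ with $\gamma(0)=(X,1,-1,1)$ decreases monotonically to $p$, and $x-3$ decays exponentially at rate $1/2$. I would treat a solution starting near $\gamma(0)$ through the variables $u=x-3$, $v=r-1$ and the transverse pair $(\nu,w)$ with norm $\eta$.

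The argument has two phases: a finite-time phase following $\gamma$ into a small neighbourhood of $p$, and a local phase near $p$ where the zero eigenvalue must be handled by hand.

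\emph{Expansion near $p$.} First I would compute the Taylor expansion of the reduced system at $p$ from \eqref{eq:surface-x}, \eqref{eq:surface-r} and \eqref{eq:product-coordinate-flow}. To leading order this should give
\begin{align*}
 \dot u&=-\tfrac12u+v+O(|u|^2+|v|^2+\eta),\\
 \dot v&=-\tfrac18v(u+2v)+O(\eta)+O(|v|(|u|+|v|)^2),
\end{align*}
with $(\nu,w)$ governed by $J_p$ from \eqref{eq:normal-p}, whose eigenvalues are $-1$ and $-3/2$. Two features matter. First, $\dot v$ contains a term linear in $(\nu,w)$ that is not multiplied by $v$, because $r-1$ factors out of $\dot r$ only on $\Sigma$. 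Second, on $\Sigma$ the $v$-equation has the center-type form $\dot v\approx -v(u+2v)/8$. Since $u$ relaxes to roughly $2v$, this gives $\dot v\approx -v^2/2$ for $v>0$. So $v$ decays to $0$ algebraically, like $2/t$, from the side $r>1$, and escapes from the side $r<1$. This is why the hypothesis $r-1>0$ is needed.

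\emph{Local phase near $p$.} I would construct a forward-invariant region
\[
 U_\delta=\{\,0<v<\delta,\ |u-2v|\le Kv,\ \eta\le c\,v^2\,\}
\]
or a similar cone with a power of $v$ as weight, and check the boundary inequalities. On $\eta=cv^2$, $\eta$ decays at a uniform exponential rate close to $1$, while $v^2$ decays only at rate $O(v)$, so the flow points inward. On $|u-2v|=Kv$, the $-\tfrac12 u$ restoring term dominates. On $v=0$, the face is unreachable, because $\dot v\ge -Cv^2-C'\eta\ge -C''v^2$ inside the region. In $U_\delta$, $v$ is monotone decreasing with $\dot v\le -v^2/C$, so $v\to0$. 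Then $u\to0$ and $\eta\to0$, which gives convergence to $p$, and existence for all time follows since the region stays a positive distance from the boundary of $P>0$.

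\emph{Finite-time phase.} Choose a time $T$ with $\gamma(T)$ deep inside the $\Sigma$-part of the target region, i.e.\ $|u|$ small. Linearize along $\gamma$ in the variables $(v,\nu,w)$; the $u$-deviation is slaved and harmless. The linearized $v$-equation has the form $\dot v=-a(t)v+b(t)\cdot(\nu,w)$, with $a(t)=((2r-1)x-2r^2-1)/(2D^2)$ evaluated on $\gamma$, which is bounded on $[0,T]$. The transverse part is governed by $J$ along $\gamma$, which by \eqref{eq:product-normal-matrix} and \eqref{eq:normal-infinity} is uniformly contracting for $x\in[3,X]$. Variation of constants then gives, on $[0,T]$,
\[
 v(t)\ge e^{-\int_0^t a}\,v_0-C_T\eta_0-C_T(v_0+\eta_0)^2,
 \qquad
 \eta(t)\le C_T\eta_0+C_T(v_0+\eta_0)^2 .
\]
Taking $\eta_0<c_Xv_0$ and $v_0<\varepsilon_X$ with both small, the state at time $T$ satisfies $v(T)\sim v_0$ and $\eta(T)\lesssim c_Xv_0+v_0^2$. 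This must be compared with the entry condition $\eta\le c\,v(T)^2$ of $U_\delta$.

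\emph{Main obstacle.} The weighting is the delicate point. A quadratic cone $\eta\le cv^2$ is not reached from the hypothesis $\eta<c_Xv$ by a finite-time argument. The fix I would try is to use the linear cone $\eta\le c\,v$ in the local phase as well. On its boundary, compare $\dot\eta\le-\kappa\eta$ with $\dot v\ge -Cv^2-C'\eta$, which gives $\tfrac{d}{dt}(\eta/v)\le(\eta/v)(-\kappa+Cv+C'c)<0$ for $c$ and $\delta$ small. The forcing term $-C'\eta$ in $\dot v$ is then at most $C'cv$. Its integral is controlled because $\eta$ decays exponentially while $v$ decays only algebraically: $\int_t^\infty\eta\lesssim\eta(t)\le cv(t)$, so the $v=0$ face is still never reached. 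Making this comparison of an exponentially decaying forcing against algebraic decay rigorous, uniformly in the center direction, is where I expect the real work to lie.
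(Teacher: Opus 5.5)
Your architecture matches the paper's. You run a fixed-time passage along the reference solution on $\Sigma\cap\{r=1\}$ from $x=X$, with the multiplier $m=\exp(-\int_0^T a)$ giving $v(T)\approx mv_0$ and the normal error staying comparable to $\eta_0$. You then close the argument with a forward-invariant cone near $p$ that is linear in $\eta$ relative to $v$. Your expansion at $p$ is correct, and so is your observation that the weight $\eta\le cv^2$ cannot be reached.

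\textbf{The gap.} The two phases still do not connect, for a reason independent of the $\eta$-weight. Your region $U_\delta$ also imposes $|u-2v|\le Kv$, i.e.\ $|x-2r-1|\le K(r-1)$. Along the reference trajectory $v\equiv0$, and $u(T)=x(T)-3=:u_*>0$ is fixed once $T$ is. A perturbed solution therefore has $u(T)\approx u_*$ while $v(T)\approx mv_0$, so $|u-2v|/v$ at time $T$ tends to infinity as $v_0\to0$. The closure of $U_\delta$ meets $\{v=0\}$ only at $p$, so $\gamma(T)$ is not near $U_\delta$ at all. To reach $|u-2v|\le Kv$, the solution would first have to shadow $\gamma$ for a time of order $\log(1/v_0)$. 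Your fixed-$T$ continuous-dependence estimate cannot supply this, and you have no invariant region for that stretch. So switching to the linear cone $\eta\le cv$ alone does not close the proof; your ``main obstacle'' is only half of it.

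\textbf{How the paper connects the phases.} It uses $u=x-2r-1$, which is $u-2v$ in your notation. On $\Sigma$ this gives the exactly decoupled system $\dot u=-\lambda u$, $\dot v=-\alpha v^2-\gamma uv$, with $\lambda\approx\alpha\approx\tfrac12$ and $\gamma\approx\tfrac18$ near $p$. The invariant region is the cone $u\ge C_0\eta$, $v\ge C_0\eta$ inside the box $0\le u,v\le\delta$, $\eta\le\delta$. Nothing bounds $u$ above by a multiple of $v$. Hence the time-$\tau$ state with $u(\tau)\approx u_*\in(0,\delta/4)$, $v(\tau)\ge mv_0/2$ and $\eta(\tau)\le C_2\eta_0<C_2c_Xv_0$ lies in it once $c_X$ and $\varepsilon_X$ are small. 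The faces are checked using $\dot\eta\le-\tfrac34\eta$, $|E_u|+|E_v|\le L\eta$ and $C_0>10L+1$. In your coordinates, the repair is to replace $|u-2v|\le Kv$ by $C_0\eta\le u-2v\le\delta$.

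\textbf{The unfinished decay step.} Inside any linear cone, the off-$\Sigma$ error in $\dot v$ is of size $cv$, which dominates $v^2$ for small $v$. So the claimed inequality $\dot v\le-v^2/C$, and with it monotonicity of $v$, is not available. The paper instead integrates $\dot v\le-\alpha_0v^2+L\eta$, valid because $u\ge0$, using $v\ge0$ and $\int_0^\infty\eta\le\tfrac43\eta(0)$. This gives $\int_0^\infty v^2<\infty$. Since $\dot v$ is bounded on the box, $v^2$ is uniformly continuous, so $v\to0$. Finally $u\to0$ by variation of constants, and $\eta\to0$ exponentially.
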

\begin{proof}
\emph{Local coordinates near the stationary point.}
Put $u=x-2r-1$ and $v=r-1$. The purpose of $u$ is to separate a strictly contracting direction from the direction with zero linear term. On $\Sigma$, subtracting twice \eqref{eq:surface-r} from \eqref{eq:surface-x} gives
\begin{equation}\label{eq:product-uv}
 \dot u=-\lambda u,\qquad
 \dot v=-\alpha v^2-\gamma uv,
\end{equation}
where
\begin{equation}\label{eq:product-uv-coefficients}
 \lambda=\frac{-2r^3+2r^2+x+1}{2(x-r^2)^2},\qquad
 \alpha=\frac{r(r+1)}{(x-r^2)^2},\qquad
 \gamma=\frac{r(2r-1)}{2(x-r^2)^2}.
\end{equation}
For the second identity, the factor in the numerator of \eqref{eq:surface-r} becomes $(2r-1)x-2r^2-1=(2r-1)u+2(r+1)v$. At $p$, the coefficients in \eqref{eq:product-uv-coefficients} are $1/2,1/2,1/8$. Thus $u$ has a negative linear term, while the leading term of $\dot v$ along $u=0$ is negative and quadratic when $v>0$.

Off $\Sigma$, use the same coefficient functions of $(x,r)$ and write the right-hand sides of \eqref{eq:product-uv} with additional errors $E_u,E_v$. Smoothness and invariance of $\Sigma$ give $|E_u|+|E_v|\le L\eta$ on a fixed small neighborhood of $p$. This is a uniform mean-value estimate in $(\nu,w)$, because the two errors vanish when $\nu=w=0$.

To estimate the normal variables, write $\mathbf n=(\nu,w)^{\mathsf T}$. Their equation has the form $\dot{\mathbf n}=J(x,r)\mathbf n+O(|\mathbf n|^2)$. The largest eigenvalue of the symmetric part of $J_p$ in \eqref{eq:normal-p} is $(-5+\sqrt2)/4<-3/4$. Hence, after shrinking the neighborhood,
\begin{equation}\label{eq:product-local}
 \dot\eta\le-\tfrac34\eta,\qquad
 \tfrac25<\lambda<\tfrac35,\qquad \alpha,\gamma>0.
\end{equation}
Indeed, for $\eta>0$ one computes $\dot\eta=\langle\mathbf n,\dot{\mathbf n}\rangle/\eta$ and absorbs the continuous change in $J$ and the quadratic remainder. At $\eta=0$, invariance of $\Sigma$ applies. The norm inequalities can also be interpreted using upper right derivatives.

\emph{A preserved cone and a neighborhood that cannot be left.}
Choose $C_0>10L+1$. Shrink the neighborhood again so that $\alpha v+\gamma u<1/4$ whenever $u,v\ge0$ there. We claim that the cone
\begin{equation}\label{eq:product-cone}
 u\ge C_0\eta,\qquad v\ge C_0\eta
\end{equation}
is preserved while the solution remains in this neighborhood. On the first face of \eqref{eq:product-cone}, where $u=C_0\eta$ and $\eta>0$, equations \eqref{eq:product-uv} and \eqref{eq:product-local} imply
\begin{equation}\label{eq:product-cone-faces}
 \begin{aligned}
 (u-C_0\eta)^{\boldsymbol\cdot}
 &\ge \bigl((\tfrac34-\tfrac35)C_0-L\bigr)\eta>0,\\
 (v-C_0\eta)^{\boldsymbol\cdot}
 &\ge \bigl(\tfrac34C_0-L-C_0(\alpha v+\gamma u)\bigr)\eta>0
 \quad\text{when }v=C_0\eta.
 \end{aligned}
\end{equation}
These inequalities point into the cone. If $\eta=0$, the equations on $\Sigma$ show that both $u=0$ and $v=0$ are invariant. Thus uniqueness prevents crossing those parts of the boundary as well.

Choose $\delta>0$ so small that the closed box defined by $0\le u,v\le\delta$ and $\eta\le\delta$ is contained in this neighborhood and in $P>0$. Until a first exit from this box, a solution in the cone satisfies
\begin{equation}\label{eq:product-box-bounds}
 \begin{aligned}
 \eta(t)&\le\eta(0)e^{-3t/4},\\
 u(t)&\le u(0)+L\int_0^t\eta(\sigma)\,d\sigma
       \le u(0)+\tfrac43L\eta(0),\\
 v(t)&\le v(0)+L\int_0^t\eta(\sigma)\,d\sigma
       \le v(0)+\tfrac43L\eta(0).
 \end{aligned}
\end{equation}
The negative terms in \eqref{eq:product-uv} were discarded here because $u,v,\alpha,\gamma$ are nonnegative. Take initial values with $u(0),v(0)\le\delta/2$, $\eta(0)<\delta/2$ and $\frac43L\eta(0)<\delta/4$. Then \eqref{eq:product-box-bounds} keeps both $u$ and $v$ below $3\delta/4$ and keeps $\eta$ below $\delta/2$. Equation~\eqref{eq:product-cone-faces} excludes exit across the lower faces. A first exit is therefore impossible. Since the box is a compact subset of $P>0$, the ODE continuation theorem gives existence for all positive time.

It remains to identify the limit. The inequality $\dot u\le-2u/5+L\eta$ and variation of constants give
\begin{equation}\label{eq:product-local-convergence}
 \begin{aligned}
 u(t)&\le u(0)e^{-2t/5}
       +L\eta(0)\int_0^t e^{-2(t-\sigma)/5}e^{-3\sigma/4}\,d\sigma,\\
 \alpha_0\int_0^t v(\sigma)^2\,d\sigma
 &\le v(0)+\tfrac43L\eta(0),
 \end{aligned}
\end{equation}
where $\alpha_0>0$ is a lower bound for $\alpha$ on the box. The first line tends to zero. The second follows by integrating $\dot v\le-\alpha_0v^2+L\eta$ and using $v(t)\ge0$. The vector field and $v$ are bounded on the box, so $v^2$ is uniformly continuous. Its integrability in \eqref{eq:product-local-convergence} implies $v(t)\to0$. Otherwise uniform continuity would give infinitely many disjoint intervals on which $v^2$ has a fixed positive lower bound. Together with $\eta(t)\to0$, this proves convergence to $p$ for the local initial values just described.

\emph{Entry from the section $x=X$.}
Consider first the solution on $\Sigma$ with $r=1$ and $x(0)=X$. Its equation is
\begin{equation}\label{eq:product-reference-trajectory}
 \dot x=-\frac{(x+1)(x-3)}{2(x-1)^2}.
\end{equation}
For $x>3$, this derivative is negative. The solution stays between $3$ and $X$, so it exists for all positive time. Its limit must be $3$, since the right-hand side of \eqref{eq:product-reference-trajectory} is strictly negative at every point of $(3,X]$. It does not reach $3$ in finite time by uniqueness.

Choose a finite time $\tau$ such that the reference value $u_*=x(\tau)-3$ lies in $(0,\delta/4)$. Along the reference trajectory $v=0$, the linearized $v$ equation from \eqref{eq:product-uv} is $\dot v=-\gamma u v$. Its solution multiplier at time $\tau$ is
\begin{equation}\label{eq:product-entry-multiplier}
 m=\exp\left(-\int_0^\tau\gamma u\,dt\right)>0.
\end{equation}
A change in the initial $u$ does not add a first-order forcing term to this equation, because the derivative of $-\alpha v^2-\gamma uv$ with respect to $u$ vanishes at $v=0$.

The reference segment is compact and lies in $P>0$. Smooth dependence gives a common time-$\tau$ map on a neighborhood of its initial point. By \eqref{eq:product-entry-multiplier} and invariance of $\Sigma$, there are constants $C_1,C_2,C_3$ such that nearby initial values with $x=X$ satisfy
\begin{equation}\label{eq:product-finite-entry}
 \begin{aligned}
 |v(\tau)-m v(0)|&\le C_1\bigl(v(0)^2+\eta(0)\bigr),\\
 \eta(\tau)&\le C_2\eta(0),\\
 |u(\tau)-u_*|&\le C_3\bigl(|v(0)|+\eta(0)\bigr).
 \end{aligned}
\end{equation}
For the first estimate, restrict first to $\Sigma$ and use the Taylor expansion in $v(0)$. The change caused by nonzero $(\nu(0),w(0))$ is then bounded by the mean value theorem. The second estimate follows because the normal component of the time map vanishes identically on $\Sigma$.

Choose $c_X>0$ so that $C_1c_X<m/4$ and $C_0C_2c_X<m/4$. Next choose $\varepsilon_X>0$ so small that $C_1\varepsilon_X<m/4$ and the last line of \eqref{eq:product-finite-entry} keeps $u(\tau)$ in $(u_*/2,3u_*/2)$. If $0<v(0)<\varepsilon_X$ and $\eta(0)<c_Xv(0)$, the first two lines imply $v(\tau)\ge m v(0)/2>C_0\eta(\tau)$. Decreasing $\varepsilon_X$ further ensures $u(\tau)>C_0\eta(\tau)$ and all the smaller-box bounds used in \eqref{eq:product-box-bounds}. Thus the perturbed trajectory enters the preserved cone in that box at time $\tau$. Smooth dependence also guarantees $P>0$ throughout $[0,\tau]$. The local argument completes the proof.
\end{proof}

The initial data of Theorem~\ref{thm:cartan} have $r=0$, whereas Lemma~\ref{lem:cartan-basin} requires $r>1$. For large initial values of $x$, the boundary system \eqref{eq:product-boundary} first moves the trajectory close to $\Sigma$ with $r$ near $3/2$. We then compare the decay of $r-1$ with the faster transverse decay while $x$ decreases to a fixed value.

\begin{lemma}\label{lem:cartan-strip}
There are $X\ge16$, $0<\eta_*\le1/4$ and $L>0$ such that on $x\ge X$, $1\le r\le2$ and $\eta\le\eta_*$ the metric is positive definite and
\begin{align}
 -1\le\dot x&\le-\tfrac14, &
 \dot\eta&\le-\tfrac12\eta,\label{eq:product-strip}\\
 \dot r&=-\frac{a(x,r)}x(r-1)+e, &
 \tfrac14\le a(x,r)&\le4,\qquad |e|\le L\eta.
 \label{eq:product-strip-r}
\end{align}
\end{lemma}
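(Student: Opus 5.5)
The plan is to work in the compactified coordinates $s=1/x$ introduced before \eqref{eq:product-boundary} and to treat every quantity as a smooth function of $(s,\nu,w,r)$ on a compact set. Consider
\[
 \mathcal K=\{0\le s\le 1/X_0,\ 1\le r\le2,\ \nu^2+w^2\le\eta_0^2\},
\]
where $X_0$ and $\eta_0$ are to be fixed. Take $\eta_0\le1/4$. Then $y\ge3/4$, and $|z|\le r+\eta_0\le 9/4$. It follows that $D=xy-z^2\ge\tfrac34x-6>0$ once $x\ge16$, so the metric is positive definite on the strip. Moreover $D^2/x^2\to y^2$ uniformly as $s\to0$. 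This is the positivity claim, and it also shows that all right-hand sides are smooth on $\mathcal K$ after multiplying numerators and denominators by $s^2$.

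\emph{The $x$-equation.} At $s=0$, the extended expression for $\dot x$ equals $(r^2-y^2-z^2)/(2y^2)$. On $\Sigma$, where $y=1$ and $z=-r$, this is exactly $-\tfrac12$. By continuity on the compact set $\mathcal K$, shrinking $\eta_0$ and increasing $X_0$ gives $-1\le\dot x\le-\tfrac14$ throughout. A direct check on \eqref{eq:surface-x} confirms the value: $\dot x\to-\tfrac12$ as $x\to\infty$, uniformly for $r\in[1,2]$.

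\emph{The transverse estimate.} Write $\dot{\mathbf n}=J(x,r)\mathbf n+\mathcal Q$. Because $\Sigma$ is invariant, the normal vector field vanishes on $\Sigma$. Smoothness on $\mathcal K$ therefore gives $|\mathcal Q|\le C\eta^2$ with a constant $C$ that is uniform up to $s=0$. By \eqref{eq:normal-infinity}, $J=-\id+O(1/x)$ uniformly in $r\in[1,2]$. Hence
\[
 \dot\eta=\langle\mathbf n,\dot{\mathbf n}\rangle/\eta\le(-1+C/x+C\eta)\eta,
\]
and choosing $X$ large and $\eta_*$ small gives $\dot\eta\le-\tfrac12\eta$. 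At $\eta=0$ we use upper right derivatives as in Lemma~\ref{lem:cartan-basin}.

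\emph{The $r$-equation.} This is the main point, because the decay rate has to be shown to be of order exactly $1/x$ with controlled constants. On $\Sigma$, \eqref{eq:surface-r} gives
\[
 \dot r=-\frac{r((2r-1)x-2r^2-1)}{2(x-r^2)^2}\,(r-1),
\]
so the natural candidate is
\[
 a(x,r)=\frac{x\,r((2r-1)x-2r^2-1)}{2(x-r^2)^2}.
\]
As $x\to\infty$ this tends to $r(2r-1)/2$, which lies in $[\tfrac12,3]$ for $r\in[1,2]$. The error is $O(1/x)$ uniformly in $r$, so $\tfrac14\le a\le4$ once $X$ is large. Set $e=\dot r-\dot r|_{\Sigma}$, where $\dot r|_\Sigma$ is the right-hand side evaluated at the same $(x,r)$ with $\nu=w=0$. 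The mean value theorem in $(\nu,w)$ bounds $|e|$ by $\sup|\partial_{(\nu,w)}\dot r|\,\eta$.

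The step I expect to need the most care is showing this supremum is uniformly bounded as $x\to\infty$, since the plain $x$-coordinates are not compact. To handle it, I use the boundary formula \eqref{eq:product-boundary}, $\dot r=-(yr+z)/(2y^2)+O(s)$. This expression is smooth in $(s,y,z,r)$ on $\mathcal K$, so its $(y,z)$-derivatives are bounded there. That gives a uniform $L$. Finally, fix $X\ge16$ satisfying all the largeness requirements above, and fix $\eta_*\le1/4$ satisfying all the smallness requirements.
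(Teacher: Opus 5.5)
Your proposal is correct and follows essentially the same route as the paper: you compactify with $s=1/x$, read off $\dot x\to-\tfrac12$ and $a\to r(2r-1)/2$ on $\Sigma$, use $J=-\id+O(x^{-1})$ for the transverse decay, and get uniform constants from Taylor estimates of the extended smooth vector field on a compact neighborhood of $\{s=0,\ 1\le r\le2,\ \mathbf n=0\}$, with positivity coming from $y\ge3/4$ and $|z|\le9/4$. The only differences are cosmetic. You obtain the $\dot x$ bounds by continuity of the extended expression rather than by a separate $C\eta$ Taylor bound, and your cruder constant $6$ in place of $81/16$ in the lower bound for $D$ still suffices.
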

\begin{proof}
On $\Sigma$, equation \eqref{eq:surface-x} tends uniformly to $-1/2$ as $x\to\infty$ for $1\le r\le2$. Equation~\eqref{eq:surface-r} has the required form with
\begin{equation}\label{eq:product-strip-coefficient}
 \begin{aligned}
 a(x,r)&=\frac{x r((2r-1)x-2r^2-1)}{2(x-r^2)^2},\\
 a(1/s,r)&=\frac{r((2r-1)-s(2r^2+1))}{2(1-sr^2)^2}
          \longrightarrow\frac{r(2r-1)}2.
 \end{aligned}
\end{equation}
The limit in \eqref{eq:product-strip-coefficient} lies in $[1/2,3]$. Hence $1/4\le a\le4$ once $X$ is sufficiently large. At the same time, \eqref{eq:normal-infinity} allows us to arrange that the largest eigenvalue of the symmetric part of $J(x,r)$ is at most $-3/4$ for all $x\ge X$, $1\le r\le2$.

It remains to make these estimates uniform off $\Sigma$. Use coordinates $(s,r,\nu,w)$ and set $\mathbf n=(\nu,w)^{\mathsf T}$. The compact set $s=0$, $1\le r\le2$, $\mathbf n=0$ lies in the domain $y-sz^2>0$ of the extended vector field. Choose a fixed compact neighborhood within this domain. The first two derivatives of the vector field in $\mathbf n$ are bounded there. The expression for $\dot x$, although not a component of the vector field in the $s$ coordinate, is also smooth on this neighborhood by the discussion preceding \eqref{eq:product-boundary}.

Denote the right-hand sides of the $x$ equation, the $r$ equation and the normal equations by $F_x$, $F_r$ and $F_\perp$, respectively. Taylor's formula and $F_\perp(s,r,0)=0$ give constants $C,L$, independent of $x\ge X$, for which
\begin{equation}\label{eq:product-uniform-remainders}
 \begin{aligned}
 |F_x(s,r,\mathbf n)-F_x(s,r,0)|&\le C\eta,\\
 |F_r(s,r,\mathbf n)-F_r(s,r,0)|&\le L\eta,\\
 |F_\perp(s,r,\mathbf n)-J(1/s,r)\mathbf n|&\le C\eta^2.
 \end{aligned}
\end{equation}
At $s=0$ the last line is understood using the continuous extension of $J$. Enlarge $X$ so that $-5/8\le F_x(s,r,0)\le-3/8$ whenever $0\le s\le1/X$. Then choose $\eta_*>0$ small enough that $C\eta_*\le1/8$ and the relevant region stays in the chosen compact neighborhood. The first line of \eqref{eq:product-uniform-remainders} gives $-1\le\dot x\le-1/4$. Taking the inner product of the last line with $\mathbf n$ gives $\dot\eta\le(-3/4+C\eta)\eta\le-\eta/2$. The middle line gives the error $e$ in \eqref{eq:product-strip-r}. This proves both estimates with constants chosen before any initial value of $x$ is specified.

Finally, decrease $\eta_*$ to at most $1/4$ and increase $X$ to at least $16$. Then $y=1+\nu\ge3/4$ and $|z|=|w-r|\le9/4$. Therefore
\begin{equation}\label{eq:product-strip-positivity}
 D=xy-z^2\ge\frac34x-\frac{81}{16}>0.
\end{equation}
This also gives a uniform positive lower bound for $D$ whenever $x\ge X$, and completes the proof.
\end{proof}

\begin{proof}[Proof of Theorem~\ref{thm:cartan}]
\emph{A finite initial interval near $s=0$.}
Fix the constants in Lemma~\ref{lem:cartan-strip}. Choose $0<\eta_0<\eta_*$ such that $4L\eta_0<1/6$. The prescribed initial data are $x(0)=M$, $y(0)=1$, $z(0)=-3$ and $r(0)=0$, with $M>9$. In the compactified coordinate, only $s(0)=1/M$ depends on $M$.

At $s=0$, the solution of \eqref{eq:product-boundary} with initial values $(y,z,r)=(1,-3,0)$ is
\begin{equation}\label{eq:product-boundary-solution}
 y=1,\qquad z=-\tfrac32(1+e^{-t}),\qquad
 r=\tfrac32(1-e^{-t}).
\end{equation}
Indeed, the first boundary equation fixes $y=1$. The remaining equations then give $(z+r)^{\boldsymbol\cdot}=-(z+r)$ and $(z-r)^{\boldsymbol\cdot}=0$, with initial values $z+r=z-r=-3$. Solving these two scalar equations gives \eqref{eq:product-boundary-solution}. Along this boundary trajectory, $r\to3/2$ and $\eta=|z+r|=3e^{-t}\to0$.

Choose a fixed time $T>\max\{\log9,\log(6/\eta_0)\}$. For the boundary solution, this ensures $1/3<r(T)-1<2/3$ and $\eta(T)<\eta_0/2$. The entire segment $0\le t\le T$ lies in the smooth domain $y-sz^2>0$. Continuous dependence on the initial value $s(0)=1/M$ therefore gives, for every sufficiently large $M$,
\begin{equation}\label{eq:product-entry}
 x(T)=\ell_M\longrightarrow\infty,\qquad
 \tfrac13<r(T)-1<\tfrac23,\qquad \eta(T)<\eta_0.
\end{equation}
The time $T$ is fixed before $M$ is taken large. Since $s=0$ is invariant, uniqueness prevents a nearby solution with $s(0)>0$ from reaching $s=0$ in finite time. Smooth dependence keeps $y-sz^2>0$ throughout this fixed interval. Thus the nearby trajectory corresponds to a positive definite metric on $[0,T]$. The extended expression for $\dot x$ is uniformly bounded on a common compact neighborhood of these segments. Integrating it shows more precisely that $\ell_M=M+O(1)$.

\emph{Passage through the region $x\ge X$.}
Restart time at $T$ with the initial bounds in \eqref{eq:product-entry}, so $x(0)=\ell_M$ for the following estimates. Set $v=r-1$ and
\begin{equation}\label{eq:product-integrating-factor}
 \mathcal A(t)=\int_0^t\frac{a(x(\sigma),r(\sigma))}{x(\sigma)}\,d\sigma.
\end{equation}
Until the trajectory leaves the region of Lemma~\ref{lem:cartan-strip}, the equation $\dot v=-\mathcal A'v+e$ gives
\begin{equation}\label{eq:product-variation}
 v(t)=e^{-\mathcal A(t)}
       \left(v(0)+\int_0^t e^{\mathcal A(\sigma)}e(\sigma)\,d\sigma\right).
\end{equation}
By \eqref{eq:product-strip} and \eqref{eq:product-strip-r}, we have $\mathcal A'\le4/X\le1/4$ and $|e(t)|\le L\eta_0e^{-t/2}$. Since $\mathcal A(0)=0$, this gives
\begin{equation}\label{eq:product-error-integral}
 \left|\int_0^t e^{\mathcal A(\sigma)}e(\sigma)\,d\sigma\right|
 \le L\eta_0\int_0^t e^{-\sigma/4}\,d\sigma
 \le4L\eta_0<\frac16.
\end{equation}
By \eqref{eq:product-error-integral}, the term in parentheses in \eqref{eq:product-variation} stays between $1/6$ and $5/6$. In particular, $0<v(t)<5/6$, because $\mathcal A(t)\ge0$. This excludes exit through $r=1$ or $r=2$. Also $\eta(t)\le\eta(0)e^{-t/2}<\eta_*$ excludes exit through the normal boundary.

For each fixed $M$, the part of the region with $X\le x\le\ell_M$ is compact. Equation~\eqref{eq:product-strip-positivity} keeps this compact set strictly inside $P>0$. The ODE can therefore be continued as long as $x\ge X$ and the preceding estimates hold. The bound $\dot x\le-1/4$ forces $x$ to reach $X$ at a finite time $t_M$. The bounds on $\dot x$ give
\begin{equation}\label{eq:product-passage-time}
 \ell_M-X\le t_M\le4(\ell_M-X),\qquad
 \log\frac{\ell_M}{X}
 \le\int_0^{t_M}\frac{dt}{x(t)}
 \le4\log\frac{\ell_M}{X}.
\end{equation}
For the integral estimate, $x$ is strictly decreasing and $1\le1/(-\dot x)\le4$. Changing variables from $t$ to $x$ proves the second pair of inequalities in \eqref{eq:product-passage-time}.

\emph{Entry into the attracting region at $x=X$.}
Using $1/4\le a\le4$ in \eqref{eq:product-integrating-factor} and then \eqref{eq:product-passage-time}, we obtain
\begin{equation}\label{eq:product-integrating-bounds}
 \frac14\log\frac{\ell_M}{X}\le\mathcal A(t_M)
 \le16\log\frac{\ell_M}{X}.
\end{equation}
The bounds on the parenthesis in \eqref{eq:product-variation}, together with \eqref{eq:product-integrating-bounds}, yield
\begin{align}
 \frac16\left(\frac X{\ell_M}\right)^{16}
 &\le r(t_M)-1
 \le\frac56\left(\frac X{\ell_M}\right)^{1/4},
 \label{eq:product-polynomial}\\
 \eta(t_M)&\le\eta_0 e^{-(\ell_M-X)/2}.
 \label{eq:product-exponential}
\end{align}
Thus the displacement $r-1$ remains positive and has a polynomial lower bound in $\ell_M$. In contrast, the normal error has the exponential upper bound \eqref{eq:product-exponential}. More explicitly, \eqref{eq:product-polynomial} implies
\begin{equation}\label{eq:product-relative-error}
 \frac{\eta(t_M)}{r(t_M)-1}
 \le6\eta_0\left(\frac{\ell_M}{X}\right)^{16}
          e^{-(\ell_M-X)/2}\longrightarrow0.
\end{equation}
The upper bound in \eqref{eq:product-polynomial} also gives $r(t_M)-1\to0$. For the fixed value of $X$ already chosen, equations \eqref{eq:product-polynomial} and \eqref{eq:product-relative-error} eventually satisfy both initial inequalities in Lemma~\ref{lem:cartan-basin}. Every sufficiently large $M$ therefore gives a solution existing for all positive time and converging to $p$.

The four coefficients in \eqref{eq:product-family} then converge to $(3,1,-1,1)$. Since the corresponding invariant tensors lie in fixed finite-dimensional spaces, this is smooth convergence on the compact group and gives precisely \eqref{eq:cartan-limit}. All choices above were made before the final lower threshold for $M$. The initial determinant is $M-9$, so increasing that threshold if necessary gives $M_0>9$.
\end{proof}

The same reduced equations identify an exact trajectory joining the two stationary pairs. It is the curve used to prove instability in \cite[Theorem~7.7]{FuLaSt2026}. Its invariance under the full flow gives the eternal solution suggested in \cite[Remark~7.8]{FuLaSt2026}.

\begin{proposition}\label{prop:eternal}
The curve $(x,y,z,r)=(1+2r,1,-r,r)$ is invariant under \eqref{eq:product-flow}, with induced equation
\begin{equation}\label{eq:eternal-speed}
 \dot r=-\frac{r(r-1)^2(r+1)}{(1+2r-r^2)^2}.
\end{equation}
For $0<r<1$ it gives a nonconstant solution on all $t\in\mathbb R$, whose backward limit is $(g_*,H_*)$ and whose forward limit is $(Q,\rho)$. For $1<r<1+\sqrt2$ its forward limit is $(g_*,H_*)$.
\end{proposition}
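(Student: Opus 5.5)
The curve lies inside the invariant surface $\Sigma$ of \eqref{eq:product-surface}. Indeed, $y=1$ and $z=-r$ hold on it, and $x=1+2r>r^2$ holds exactly when $1-\sqrt2<r<1+\sqrt2$. So I will work with the reduced system \eqref{eq:surface-x}--\eqref{eq:surface-r} and never return to the four-dimensional equations. In the coordinates of the proof of Lemma~\ref{lem:cartan-basin}, the curve is $\{u=0\}$ with $u=x-2r-1$. There \eqref{eq:product-uv} gives $\dot u=-\lambda u$, so $u=0$ is invariant within $\Sigma$. The identity behind \eqref{eq:product-uv} is purely algebraic, obtained by subtracting twice \eqref{eq:surface-r} from \eqref{eq:surface-x}. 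It therefore holds wherever $x>r^2$, not only near $p$. I will recheck this by direct substitution of $x=1+2r$, which is also the cleanest way to present invariance.

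Next I substitute $x=1+2r$ into \eqref{eq:surface-r}. The factor $(2r-1)x-2r^2-1$ becomes $(2r-1)(1+2r)-2r^2-1=2r^2-2=2(r-1)(r+1)$. The denominator becomes $2(1+2r-r^2)^2$. This yields \eqref{eq:eternal-speed}. As a consistency check, $\dot x=2\dot r$ should agree with \eqref{eq:surface-x} at $x=1+2r$. There $x-1+2r^3-4r^2=2r(r-1)^2$ and $x+1=2(r+1)$, so \eqref{eq:surface-x} gives exactly $2\dot r$.

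The dynamics are then one-dimensional. The right-hand side of \eqref{eq:eternal-speed} is smooth on $1-\sqrt2<r<1+\sqrt2$ and vanishes there only at $r=0$ and $r=1$. It is negative on $(0,1)$ and on $(1,1+\sqrt2)$. Take initial data with $0<r<1$. The solution stays trapped in $(0,1)$ by uniqueness, since $0$ and $1$ are equilibria. It therefore exists for all $t\in\mathbb R$ and is strictly decreasing. Its limit as $t\to+\infty$ is an equilibrium in $[0,1]$, namely $r=0$, which corresponds to $(1,1,0,0)=(Q,\rho)$. Its limit as $t\to-\infty$ is $r=1$, which is $p=(3,1,-1,1)$, that is $(g_*,H_*)$.

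For $1<r_0<1+\sqrt2$ the solution decreases and stays above $1$. Forward existence for all time and the limit $r\to1$ follow in the same way, since $x=1+2r$ stays bounded and $D=x-r^2$ stays bounded below. I will not claim backward completeness in this case. The main point to watch is that the metric stays positive definite along the whole curve, which holds because $D=1+2r-r^2>0$ on the relevant interval.

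There is no real obstacle here. The only care needed is the algebra of the substitution and the remark that smooth convergence of the coefficients gives convergence of the tensor pairs, as noted after \eqref{eq:parameter-space}.
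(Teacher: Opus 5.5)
Your proposal is correct. The invariance part is the same as the paper's: on $\Sigma$ the identity \eqref{eq:product-uv} preserves $u=0$, and substituting $x=1+2r$ into \eqref{eq:surface-r} gives \eqref{eq:eternal-speed}, with $\dot x=2\dot r$ as the consistency check. You are also right that \eqref{eq:product-uv} holds on all of $\Sigma$, not only near $p$.

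Your treatment of the one-dimensional dynamics is where you take a different route. Your argument is qualitative:
\begin{itemize}
\item $0$ and $1$ are equilibria, so by uniqueness a solution starting in $(0,1)$ stays there for all time, and one starting at $r_0\in(1,1+\sqrt2)$ stays in $(1,r_0]$ in forward time.
\item It therefore remains in a compact subset of the interval on which the vector field is smooth, and so it is defined for all (forward) time.
\item Monotonicity then forces convergence to the only available equilibrium.
\end{itemize}
The paper instead writes \eqref{eq:eternal-speed} as $\dot r=-q_0(r)$ and separates variables. It computes the endpoint speeds $q_0(r)/r\to1$ as $r\to0^+$ and $q_0(r)/(r-1)^2\to\tfrac12$ as $r\to1$, which show that the time integrals diverge at both ends.

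Your route is shorter and needs no asymptotics. The paper's route yields the rates of approach as a byproduct. Convergence to $(Q,\rho)$ is exponential, while convergence to $(g_*,H_*)$ is only algebraic, like $2/|t|$. This slow rate reflects the zero eigenvalue of the linearization at $p$.
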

\begin{proof}
On $\Sigma$, equation \eqref{eq:product-uv} preserves $u=x-2r-1=0$. Substituting $x=1+2r$ in \eqref{eq:surface-r} gives \eqref{eq:eternal-speed}, and then $\dot x=2\dot r$. This verifies invariance under all four equations, including the metric equation.

The determinant on the curve is $D(r)=1+2r-r^2$. It is positive for $1-\sqrt2<r<1+\sqrt2$, in particular on both stated intervals. Write \eqref{eq:eternal-speed} as $\dot r=-q_0(r)$, so that $q_0$ is positive on each interval. For $r_0\in(0,1)$, separation of variables gives
\begin{equation}\label{eq:eternal-time-integral}
 t-t_0=\int_{r(t)}^{r_0}
       \frac{(1+2\xi-\xi^2)^2}{\xi(\xi-1)^2(\xi+1)}\,d\xi.
\end{equation}
There is no zero of $q_0$ in $(0,1)$, so $r$ is strictly decreasing. On each compact subinterval the scalar vector field is smooth, and continuation can fail only by approaching an endpoint. The endpoint behavior is determined by
\begin{equation}\label{eq:eternal-endpoint-speeds}
 \lim_{r\to0^+}\frac{q_0(r)}r=1,\qquad
 \lim_{r\to1}\frac{q_0(r)}{(r-1)^2}=\frac12.
\end{equation}
By \eqref{eq:eternal-endpoint-speeds}, the integrand in \eqref{eq:eternal-time-integral} is comparable to $1/\xi$ near $0$ and to $2/(\xi-1)^2$ near $1$. Both endpoint integrals diverge. Hence $r(t)\to0$ as $t\to+\infty$, while $r(t)\to1$ as $t\to-\infty$. The endpoint coefficient vectors are $(1,1,0,0)$ and $(3,1,-1,1)$, which gives the asserted forward and backward limits and an eternal nonconstant solution.

If $1<r_0<1+\sqrt2$, the same scalar equation has negative speed and keeps $r$ in $(1,r_0]$ in forward time. Its only possible forward limit is $1$, since the speed is strictly negative away from that endpoint. The second limit in \eqref{eq:eternal-endpoint-speeds} shows that the time needed to reach $1$ is infinite. Thus the forward solution exists for all positive time and converges to $(g_*,H_*)$.
\end{proof}

For comparison with \cite[Section~7]{FuLaSt2026}, the metric coordinates in the decomposition into diagonal and anti-diagonal subspaces are $a=\tfrac12(x+y+2z)$, $b=\tfrac12(x+y-2z)$ and $\ell=\tfrac12(x-y).$ The curve in Proposition~\ref{prop:eternal} becomes $(a,b,\ell,r)=(1,1+2r,r,r)$. Setting $r=1-s$ gives their curve $(1,3-2s,1-s,1-s)$. Its nonconstant points have $H_r\ne\rho$. The trajectory estimates above are needed to obtain the Cartan initial condition in Theorem~\ref{thm:cartan}.
\section*{Acknowledgements}
This work was supported by the National Natural Science Foundation of China (12571025 and 12131012) and the Natural Science Research of Jiangsu Education Institutions of China (No. 23KJB110016).

\section*{Declaration of generative AI and AI-assisted technologies in the manuscript preparation process}
During the preparation of this manuscript, the authors used OpenAI's Codex to assist with literature searches, drafting and checking mathematical arguments and calculations, and improving the organization and language of the text. The authors take full responsibility for the content of this article.

\end{document}